\newcommand\NoBlackBoxes{\global\overfullrule0pt}
\newtheorem{definition}{Definition}[section]
\newtheorem{theorem}[definition]{Theorem}
\newtheorem{prop}[definition]{Proposition}
\newtheorem{lem}[definition]{Lemma}
\newtheorem{rem}[definition]{Remark}
\newtheorem{cor}[definition]{Corollary}
\newcommand{\1}{\mathbbm{1}}
\newcommand{\N}{{\mathbb{N}}}
\newcommand{\R}{{\mathbb{R}}}
\renewcommand{\P}{{\mathbb{P}}}
\newcommand{\E}{\mathbb{E}}
\newcommand{\V}{\mathbb{V}}
\newcommand{\CUn}{\mathcal{U}_n}
\newcommand{\vep}{\varepsilon}
\newcommand{\eps}{\varepsilon}
\newcommand{\nconv}{\xrightarrow[]{n\to\infty}}
\newcommand{\Pconv}{\xrightarrow[n\to\infty]{\P}}
\newcommand{\EW}[1]{\mathbb{E}\left[#1\right]}
\numberwithin{equation}{section}
\begin{document}
\title[A CLT for incomplete U-statistics]{A Central Limit Theorem for incomplete U-statistics over triangular arrays}

\author[Matthias L\"owe]{Matthias L\"owe}\thanks{Research of both authors was
funded by the Deutsche Forschungsgemeinschaft (DFG, German Research Foundation) under Germany's Excellence Strategy
EXC 2044-390685587, Mathematics M\"unster: Dynamics-Geometry-Structure}
\address[Matthias L\"owe]{Fachbereich Mathematik und Informatik,
Universit\"at M\"unster,
Einsteinstra\ss e 62,
48149 M\"unster,
Germany}

\email[Matthias L\"owe]{maloewe@uni-muenster.de}

\author[Sara Terveer]{Sara Terveer}
\address[Sara Terveer]{Fachbereich Mathematik und Informatik,
Universit\"at M\"unster,
Einsteinstra\ss e 62,
48149 M\"unster,
Germany}

\email[Sara Terveer]{sara.terveer@uni-muenster.de}

\subjclass[2010]{60B20, 05C81, 05C80}
\keywords{Central Limit Theorem, U-statistics, incomplete U-statistics, triangular arrays}


\begin{abstract}
We analyze the fluctuations of incomplete $U$-statistics over a triangular array of independent random variables.
We give criteria for a Central Limit Theorem (CLT, for short) to hold in the sense that we prove that an appropriately scaled and centered version of the U-statistic converges to a normal random variable. Our method of proof relies on a martingale CLT. A possible application -- a CLT for the hitting time for random walk on random graphs -- will be presented in \cite{LoTe20b}.
\end{abstract}

\maketitle
\section{Introduction}
U-statistics constitute a general method to construct unbiased minimum variance estimators in the theory of statistics. A thorough investigation of their properties can be found, e.g., in the monographs \cite{Denker85} or \cite{Lee90}.
U-statistics also naturally appear in other contexts, like in the theory of random graphs where they count occurrences of certain subgraphs, e.g.\ triangles (cf. \cite{Janson90}). In the latter case the U-statistics are incomplete, by which we mean that not all possible combinations of the random variables are taken into account. Such a ''dilution'' can also be random, as considered in \cite{Janson84}.
After having established a law of large numbers for U-statistics (cf.\ \cite{Christ_LLN}) the most obvious next question is to analyze their asymptotic distribution. This was already investigated in a seminal paper by Hoeffding \cite{Hoe48}. In general, whether a U-statistic is asymptotically normal or not, may depend on whether its kernel function is degenerate or not (\cite{Denker85}), i.e.\ on whether the conditional expectation of the kernel function given some the variables is zero or not.
Berry-Esseen theorems and Edgeworth expansions around this CLT were analyzed, among others, in \cite{BeG95} and \cite{BiGvZ86}.
Fluctuation results for U-statistics on the level of large or moderate deviations were obtained in \cite{EL95}, \cite{EL98}, and \cite{E98}.

In this note we will study a situation where the random variables in the U-statistic stem from a triangular array as in \cite{Mal87}. However, additionally to this we consider incomplete U-statistics, where a random variable determines whether a certain summand is taken into account or not. Finally, also the kernel function $h$ may change with $n$, the line number of the triangular array. This situation is motivated by our analysis of hitting times for random walks on random graphs in an accompanying paper (see \cite{LoTe20b}). However, as this situation also is a generalization of the settings in \cite{Janson84}, \cite{JJ86}, and \cite{Mal87}, we think it might be also interesting in its own rights.

To be more precise, let us describe our setting formally.
Let $X_{n1},\dots,X_{nn}, n=1, 2, \ldots, \infty$ be a triangular array of random variables with values in some measurable space (for the sake of this paper this measurable space will be $\R$), independent of each other and having the same distribution function $F_n(x)$ in each row.
Let
$h_n: \R \times \R \to \R$ be a real-valued, symmetric Borel function. For $i,j=1,\dots,n$, let
\[\Phi_n(i,j)=Z_{ij}\cdot h_n(X_{ni},X_{nj})\]
(of course, $\Phi_n$ is a function of $Z_{ij}, X_{ni}$, and $X_{nj}$ rather than just of $i,j$; however, for the sake of brevity we will omit the variables here and in the following definitions).
Here the $Z_{ij}=Z_{ji}$ are assumed to be i.i.d.\ $Ber(p_n)$ random variables (apart from the constraint that $Z_{ij}=Z_{ji}$) that are independent of the triangular array of the $(X_{ni})$. Throughout this note we will assume that $p$ may depend on $n$, but that $np \to \infty$.
Moreover, assume that for all $n\in \N$ and $1 \le i \neq j \le n$
\begin{equation}
\EW{h_n(X_{ni},X_{nj})}=0 \qquad \mbox{and } \quad \EW{h_n^2(X_{ni},X_{nj})}<\infty.
\label{eq:hmoments}
\end{equation}
Let us construct the following U-statistic
\[\mathcal{U}_n=\binom{n}{2}^{-1}\sum\limits_{1\leq i<j\leq n}\Phi_n(i,j)=\binom{n}{2}^{-1}\sum\limits_{1\leq i<j\leq n}Z_{i,j}\cdot h_n(X_{ni},X_{nj}).\]
To construct a Hoeffding-type decomposition (see \cite{Hoe48}) we introduce
\begin{align}
\begin{split}
\Psi_j^{(n)}(i)\coloneqq\EW{\Phi_n(i,j)\mid X_i,Z_{ij}}
=Z_{ij}\EW{h_n(X_{ni},X_{nj})\mid X_{ni}}\end{split}\label{Psi}\\
\begin{split}
\beta_n^2&\coloneqq\EW{\Phi_n^2(1,2)}, \quad \gamma_n^2\coloneqq\EW{\left(\Psi_2^{(n)}(1)\right)^2}, \mbox{ and } \theta_n^2\coloneqq np_n\gamma_n^2+\beta_n^2/2.
\label{defn}
\end{split}
\end{align}
Then obviously, $\Phi_n(i,j)$ and $\Psi_j^{(n)}(i)$ are centered. Next, put
\begin{align}
\tilde{\Phi}_n(i,j)&=\Phi_n(i,j)-\Psi_j^{(n)}(i)-\Psi_i^{(n)}(j),\notag\\
\tilde{h}_n(X_{ni},X_{nj})&=h_n(X_{ni},X_{nj})-\EW{h_n(X_{ni},X_{nj})\mid X_{ni}}-\EW{h_n(X_{ni},X_{nj})\mid X_{nj}}.\notag
\end{align}
Then $\tilde{\Phi}_n(i,j)=Z_{i,j}\tilde{h}_n(X_{ni},X_{nj})\label{eq:tildephi}$
is again centered -- we even have for every $i\neq j$ and every $k$
\begin{equation}
\EW{\tilde{h}_n(X_{ni},X_{nj})\mid X_{nk}}=0,
\label{eq:htildecentering}
\end{equation}
even for $k=i,j$ (as can be seen by applying the definition of $\tilde{h}_n$).

In the following we omit the index $n$ whenever suitable. We will also frequently write $h(i,j)$ and $\tilde{h}(i,j)$ as shorthand notations for $h_n(X_{ni},X_{nj})$ and $\tilde{h}_n(X_{ni},X_{nj})$.

Let us collect some properties of the above quantities in the following lemma, whose proof is deferred to the appendix:
\begin{lem}\label{lem:lem phi und h}
For any $i\neq j$ we have:
\begin{enumerate}
\item \label{properties:phi}
$\EW{\tilde{\Phi}^2(i,j)}=\beta_n^2-2\gamma_n^2$.
\item \label{properties:h}
$\EW{\EW{h(i,j)\mid X_i}^2}=\frac{\gamma_n^2}{p}, \mbox{ }\EW{h^2(i,j)}=\frac{\beta_n^2}{p} \mbox { and }
\EW{\tilde{h}(i,j)^2}=\frac{\beta_n^2-2\gamma_n^2}{p}.$
\end{enumerate}
\end{lem}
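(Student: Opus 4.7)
The plan is to handle part (2) first by direct expectation computations, and then derive part (1) in one line. Throughout, the key mechanisms are: independence of $Z_{ij}$ from the array $(X_{nk})$, the fact that $\mathbb{E}[Z_{ij}^2]=p$ (since $Z_{ij}$ is Bernoulli), the tower property, and the centering hypothesis \eqref{eq:hmoments}.

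For the first two identities of (2), I would simply unpack the definitions \eqref{defn}. Since $Z_{12}$ is independent of $(X_{n1},X_{n2})$,
\[\beta_n^2 = \mathbb{E}\!\left[Z_{12}^2\, h_n^2(X_{n1},X_{n2})\right] = p\,\mathbb{E}[h_n^2(X_{n1},X_{n2})],\]
and analogously $\gamma_n^2 = p\,\mathbb{E}\!\left[\mathbb{E}[h_n(X_{n1},X_{n2})\mid X_{n1}]^2\right]$. Dividing by $p$ and using the identical distribution across the row (so the result is the same for all $i\neq j$) gives the two claimed formulas.

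The third identity of (2) is the one that requires a bit of care. Writing $H = h_n(X_{ni},X_{nj})$, $g_i = \mathbb{E}[H\mid X_{ni}]$, $g_j = \mathbb{E}[H\mid X_{nj}]$, I would expand
\[\mathbb{E}[\tilde h_n^2(X_{ni},X_{nj})] = \mathbb{E}[(H-g_i-g_j)^2]\]
and evaluate the six resulting terms. The cross terms $\mathbb{E}[Hg_i]$ and $\mathbb{E}[Hg_j]$ collapse to $\mathbb{E}[g_i^2]$ and $\mathbb{E}[g_j^2]$ by the tower property. The mixed term $\mathbb{E}[g_ig_j]$ vanishes: conditioning on $X_{ni}$ and using independence of $X_{ni},X_{nj}$ yields $\mathbb{E}[g_j\mid X_{ni}] = \mathbb{E}[g_j] = \mathbb{E}[H]=0$ by \eqref{eq:hmoments}. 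By symmetry of the row $\mathbb{E}[g_i^2]=\mathbb{E}[g_j^2]$, so everything collapses to $\mathbb{E}[H^2]-2\mathbb{E}[g_i^2] = \beta_n^2/p - 2\gamma_n^2/p$.

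Part (1) is then a one-liner: from $\tilde\Phi_n(i,j) = Z_{ij}\tilde h_n(X_{ni},X_{nj})$, independence of $Z_{ij}$ from the $X$'s, and $\mathbb{E}[Z_{ij}^2]=p$, the $p$ in the denominator of (2) cancels, giving $\mathbb{E}[\tilde\Phi_n^2(i,j)] = \beta_n^2-2\gamma_n^2$. The proof poses no real obstacle; it is pure bookkeeping. The only place to be attentive is the vanishing of $\mathbb{E}[g_ig_j]$, which simultaneously exploits the row-wise independence and the centering of $h_n$.
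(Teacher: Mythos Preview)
Your proof is correct and uses the same ingredients as the paper (tower property, independence of $Z_{ij}$ from the $X$'s, and the centering \eqref{eq:hmoments}); the only difference is that you reverse the order, expanding $\tilde h^2$ directly to get part~(2) and then reading off part~(1), whereas the paper expands $\tilde\Phi^2$ in terms of $\Phi$ and $\Psi$ to get part~(1) first and then deduces the $\tilde h$ identity. The two routes are equivalent bookkeeping.
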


We can now compute the Hoeffing decomposition of $\mathcal{U}_n$ (again the proof is given in the appendix):
\begin{lem}\label{lem:hoeffing_dec}
We can rewrite $\CUn$ in the following way:
\begin{equation}
\CUn=\sum\limits_{i=1}^{n}\biggl(\binom{n}{2}^{-1}\sum\limits_{j=1}^{i-1}\tilde{\Phi}(i,j)+\binom{n}{2}^{-1}\sum_{\substack{j=1\\j\neq i}}^{n}\Psi_j(i)\biggr)
\label{eq:hoeffdingdecomposition}
\end{equation}

\end{lem}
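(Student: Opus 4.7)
The proof is essentially a bookkeeping exercise: substitute the definition of $\tilde\Phi$ into $\mathcal{U}_n$ and then reindex the resulting double sums so that the outermost sum runs over a single index $i$ from $1$ to $n$.

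Concretely, I would start from the defining expression $\mathcal{U}_n=\binom{n}{2}^{-1}\sum_{1\le i<j\le n}\Phi(i,j)$ and use the identity $\Phi(i,j)=\tilde\Phi(i,j)+\Psi_j(i)+\Psi_i(j)$, which is just a rearrangement of the definition of $\tilde\Phi_n(i,j)$. This splits $\mathcal{U}_n$ into two pieces, one involving $\tilde\Phi$ and one involving the $\Psi$-terms.

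For the $\tilde\Phi$-piece, I would use the symmetry $\tilde\Phi(i,j)=\tilde\Phi(j,i)$ (which follows from $Z_{ij}=Z_{ji}$ and the symmetry of $h_n$) to rewrite
\[
\sum_{1\le i<j\le n}\tilde\Phi(i,j)=\sum_{i=1}^{n}\sum_{j=1}^{i-1}\tilde\Phi(i,j),
\]
i.e.\ I simply reverse the roles of $i$ and $j$ so that the outer index is the larger one. For the $\Psi$-piece, the key observation is that as $(i,j)$ ranges over the ordered pairs with $i<j$, the expression $\Psi_j(i)+\Psi_i(j)$ produces exactly once every term $\Psi_m(k)$ with $k\neq m$: if $k<m$ it comes from $\Psi_j(i)$ with $(i,j)=(k,m)$, and if $k>m$ it comes from $\Psi_i(j)$ with $(i,j)=(m,k)$. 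Hence
\[
\sum_{1\le i<j\le n}\bigl(\Psi_j(i)+\Psi_i(j)\bigr)=\sum_{i=1}^{n}\sum_{\substack{j=1\\ j\neq i}}^{n}\Psi_j(i).
\]

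Combining the two identities and pulling the common outer sum in $i$ to the front yields precisely \eqref{eq:hoeffdingdecomposition}. There is no genuine obstacle here; the only point that needs attention is making sure the symmetry of $\tilde\Phi$ is correctly invoked so that the rewriting $\sum_{i<j}\to\sum_i\sum_{j<i}$ is legitimate, and that the double counting in the $\Psi$-sum is resolved exactly as described, so that every pair $(i,j)$ with $i\neq j$ contributes the term $\Psi_j(i)$ once and only once.
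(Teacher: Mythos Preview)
Your proposal is correct and follows essentially the same approach as the paper's proof: substitute $\Phi(i,j)=\tilde\Phi(i,j)+\Psi_j(i)+\Psi_i(j)$, swap indices in the $\Psi_i(j)$-sum to combine the two $\Psi$-sums into $\sum_i\sum_{j\neq i}\Psi_j(i)$, and use the symmetry of $\tilde\Phi$ to rewrite $\sum_{i<j}\tilde\Phi(i,j)=\sum_i\sum_{j<i}\tilde\Phi(i,j)$. The paper's write-up is slightly terser but the argument is identical.
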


This allows to compute the asymptotic variance of $\CUn$.
We will show in the appendix
\begin{lem}\label{lem:variance}
For the variance of $\CUn$ we have the following asymptotic identities
\[\V{\CUn}\sim\binom{n}{2}^{-1}\left(\beta_n^2+2np\gamma_n^2\right)=\binom{n}{2}^{-1}2\theta_n^2
\mbox{ and  }
\sqrt{\V{\CUn}}\binom{n}{2}\sim n\theta_n.\]
\end{lem}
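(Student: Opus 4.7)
The plan is to start from the Hoeffding decomposition in Lemma~\ref{lem:hoeffing_dec} by writing $\CUn=\binom{n}{2}^{-1}(A+B)$ with
$$A\coloneqq\sum_{i<j}\tilde\Phi(i,j),\qquad B\coloneqq\sum_{i\neq j}\Psi_j(i),$$
and then to establish two orthogonality facts: (i) $A$ and $B$ are uncorrelated, and (ii) the terms $\tilde\Phi(i,j)$ making up $A$ are pairwise uncorrelated across distinct unordered pairs $\{i,j\}$. Both are driven by the strong centering property \eqref{eq:htildecentering}, $\EW{\tilde h(i,j)\mid X_k}=0$ for every $k$: after splitting each covariance by the overlap of its index sets, independence of the relevant $Z$-variables factors out a power of $p$ times a pure $X$-expectation, and conditioning on the shared $X$-variable then produces the vanishing factor $\EW{\tilde h(i,j)\mid X_r}=0$. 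This gives $\V(\CUn)=\binom{n}{2}^{-2}\bigl[\V(A)+\V(B)\bigr]$ and, via Lemma~\ref{lem:lem phi und h}(\ref{properties:phi}), $\V(A)=\binom{n}{2}\EW{\tilde\Phi(1,2)^2}=\binom{n}{2}(\beta_n^2-2\gamma_n^2)$.

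For $\V(B)$ I would exploit $Z_{ij}=Z_{ji}$ to rewrite $B=\sum_{i<j}Z_{ij}\bigl(g(X_{ni})+g(X_{nj})\bigr)$ with $g(x)\coloneqq\EW{h_n(x,X_{n2})}$, so that $\EW{g(X_{n1})}=0$ by \eqref{eq:hmoments} and $\EW{g(X_{n1})^2}=\gamma_n^2/p$ by Lemma~\ref{lem:lem phi und h}(\ref{properties:h}). Expanding $\EW{B^2}$ as a sum over pairs of edges $\{i,j\},\{k,l\}$ and splitting by $|\{i,j\}\cap\{k,l\}|\in\{0,1,2\}$ gives three cases: the diagonal contributes $\binom{n}{2}\cdot 2\gamma_n^2$; vertex-disjoint edges contribute $0$ by full independence of the $X$'s together with the centering of $g$; and edges meeting in exactly one vertex $v$ contribute $\EW{Z_{ij}Z_{kl}}\cdot\EW{g(X_v)^2}=p^2\cdot\gamma_n^2/p=p\gamma_n^2$ per ordered pair of edges, of which there are $n(n-1)(n-2)$. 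Summing yields $\V(B)=n(n-1)\gamma_n^2\bigl[1+(n-2)p\bigr]$.

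Combining the two pieces gives
$$\V(\CUn)=\binom{n}{2}^{-1}\bigl[\beta_n^2-2\gamma_n^2+2\gamma_n^2+2(n-2)p\gamma_n^2\bigr]=\binom{n}{2}^{-1}\bigl[\beta_n^2+2(n-2)p\gamma_n^2\bigr],$$
and because $np\to\infty$ the relative error between this and $\binom{n}{2}^{-1}(\beta_n^2+2np\gamma_n^2)=\binom{n}{2}^{-1}2\theta_n^2$ is $O(1/n)$, yielding the first asymptotic; the second, $\sqrt{\V(\CUn)}\binom{n}{2}\sim n\theta_n$, follows by taking a square root and using $\sqrt{n(n-1)}\sim n$. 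The main obstacle is the combinatorial bookkeeping in $\V(B)$: correctly enumerating pairs of edges by their intersection size and matching each count with the right moments of $Z$ and $g$. The covariance arguments behind (i) and (ii) are comparatively mechanical, since every case is collapsed to zero via \eqref{eq:htildecentering}.
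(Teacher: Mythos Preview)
Your proposal is correct and essentially matches the paper's proof: both start from the Hoeffding decomposition \eqref{eq:hoeffdingdecomposition}, use the centering \eqref{eq:htildecentering} to obtain the orthogonality of the $\tilde\Phi$-part and the $\Psi$-part (the paper packages these as Lemma~\ref{lem:phikleinindep}), and then compute $\V(B)$ by splitting the $\Psi$-covariances according to whether the two ``edges'' share a vertex, arriving at the exact identity $\V(\CUn)=\binom{n}{2}^{-1}[\beta_n^2+2(n-2)p\gamma_n^2]$ before passing to the asymptotic. The only cosmetic difference is that you rewrite $B=\sum_{i<j}Z_{ij}(g(X_{ni})+g(X_{nj}))$ and do the case split over unordered edges, whereas the paper keeps the ordered-pair form $\sum_{i\neq j}\Psi_j(i)$ and distinguishes $k=i$ versus $k\neq i$; the bookkeeping and the final counts are identical.
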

Here and below, for two sequences $(a_n)$ and $(b_n)$ we write $a_n \sim b_n$, if $a_n/b_n \to 1$.

To prove a CLT for $\CUn$ we will consider $$\frac{\CUn}{\sqrt{\V {\CUn}}}\sim\sum\limits_{i=1}^{n}\xi_{i}$$
with $\xi_{i,n}=\xi_i=\xi_i^{(1)}+\xi_i^{(2)}$ for $i=1,\dots,n$, where
$$\xi_i^{(1)}=\xi_{i,n}^{(1)}=\frac{1}{n\theta_n}\sum_{\substack{j=1\\j\neq i}}^{n}\Psi_j(i),\quad \xi_i^{(2)}=\xi_{i,n}^{(2)}=\frac{1}{n\theta_n}\sum\limits_{j=1}^{i-1}\tilde{\Phi}(i,j).$$

We are aiming to prove the following results:

\begin{theorem}\label{theo1}
Assume that for all $\vep>0$
\begin{equation}	\eta_1=\sum\limits_{i=1}^{n}\E\bigl[\xi_i^2\1_{\{\left|\xi_i\right|\geq\eps\}}
\mid(X_k)_{k=1,\dots,i-1},(Z_{l,m})_{\substack{l=1,\dots,i-1,\phantom{nnl }\\m=1,\dots,n,\,m\neq l}}\bigr]\Pconv 0,
\label{16}\tag{B1}
\end{equation}
\begin{equation} \eta_2=\sum\limits_{i=1}^{n}\E\bigl[\xi_i^2\mid(X_k)_{k=1,\dots,i-1},
(Z_{l,m})_{\substack{l=1,\dots,i-1,\phantom{nnl}\\m=1,\dots,n,\,m\neq l}}\bigr]\Pconv 1.
\label{17}\tag{B2}
\end{equation}
Then $\frac{\CUn}{\sqrt{\V {\CUn}}}$ converges in distribution to a standard normal random variable.
\end{theorem}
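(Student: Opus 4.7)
The form of hypotheses (\ref{16}) and (\ref{17}) points directly at a martingale central limit theorem in the spirit of McLeish or Brown (cf.\ Theorem~3.2 of Hall--Heyde). Setting
\[\mathcal{F}_i = \sigma\bigl((X_k)_{k=1,\dots,i-1},\,(Z_{l,m})_{l=1,\dots,i-1,\,m\neq l}\bigr),\]
which is precisely the $\sigma$-algebra over which (\ref{16}) and (\ref{17}) condition, the plan is threefold: (i) verify that $(\xi_i)_{i=1,\dots,n}$ is a martingale difference array adapted to $(\mathcal{F}_{i+1})$; (ii) invoke the martingale CLT, whose hypotheses are exactly (\ref{16}) (conditional Lindeberg) and (\ref{17}) (conditional variance normalization), to obtain $\sum_{i=1}^n \xi_i \dconv \mathcal{N}(0,1)$; (iii) conclude by Slutsky, using Lemma~\ref{lem:variance} to replace $n\theta_n/\binom{n}{2}$ by $\sqrt{\V\CUn}$.

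\textbf{Martingale differences.} Adaptedness $\xi_i\in\mathcal{F}_{i+1}$ is immediate, since $\xi_i$ depends only on $X_i$, on $(X_k)_{k<i}$ and on the variables $Z_{i,m}$ with $m\neq i$. The substantive check is $\E[\xi_i\mid\mathcal{F}_i]=0$. For $\xi_i^{(1)}$ I split the sum over $j\neq i$ according to whether $j<i$ or $j>i$. If $j>i$, both $Z_{ij}$ and $X_i$ are independent of $\mathcal{F}_i$, so the conditional expectation of $Z_{ij}\E[h(i,j)\mid X_i]$ equals $p_n\,\E[h(i,j)]=0$ by (\ref{eq:hmoments}). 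If $j<i$, then $Z_{ij}\in\mathcal{F}_i$ while $X_i$ is still independent of $\mathcal{F}_i$, and $\E[h(i,j)\mid X_i]$ is a mean-zero function of $X_i$ alone, so the conditional expectation again vanishes. For $\xi_i^{(2)}$ every index satisfies $j<i$: here $Z_{ij}$ and $X_j$ lie in $\mathcal{F}_i$ whereas $X_i$ is independent of $\mathcal{F}_i$, and property (\ref{eq:htildecentering}) with $k=j$ tells us $\E[\tilde h(i,j)\mid X_j]=0$, so evaluating at $X_j$ conditions each summand to zero.

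\textbf{Conclusion and main obstacle.} Once the martingale difference property is established, the martingale CLT applied under (\ref{16}) and (\ref{17}) gives $\sum_{i=1}^n\xi_i\dconv\mathcal{N}(0,1)$. By construction $\CUn=\binom{n}{2}^{-1}\,n\theta_n\sum_i\xi_i$, while Lemma~\ref{lem:variance} yields $n\theta_n/\bigl(\binom{n}{2}\sqrt{\V\CUn}\bigr)\to 1$, so
\[\frac{\CUn}{\sqrt{\V\CUn}}=(1+o(1))\sum_{i=1}^{n}\xi_i,\]
and Slutsky's theorem delivers the announced CLT. The one point I anticipate needing care is the bookkeeping in step (i): for $j>i$ the Bernoulli $Z_{ij}$ lies \emph{outside} $\mathcal{F}_i$, while for $j<i$ it is \emph{inside}. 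Without separating these two regimes one might try to ``pull out'' $Z_{ij}$ uniformly and either lose mean zero or overlook the required independence; this split, though elementary, is the key move. The deeper difficulty, which is not addressed here but pushed into (\ref{16}) and (\ref{17}), is of course the verification of the two martingale CLT hypotheses in any concrete application.
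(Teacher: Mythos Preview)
Your proposal is correct and follows essentially the same route as the paper: define the filtration so that (\ref{16}) and (\ref{17}) become the conditional Lindeberg and conditional variance conditions of a martingale CLT, verify the martingale difference property of $(\xi_i)$ by splitting the $\Psi$-sum into $j<i$ and $j>i$ and using (\ref{eq:htildecentering}) for the $\tilde\Phi$-sum, and then invoke the martingale CLT together with Lemma~\ref{lem:variance}. The only cosmetic difference is an index shift (your $\mathcal{F}_i$ is the paper's $\mathcal{F}_{i-1}$) and the specific reference for the martingale CLT (the paper quotes Girko rather than Hall--Heyde), neither of which affects the argument.
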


To give alternative conditions that will be useful in the application we have in mind let us introduce
\begin{align}
{G}_k(i,j)&=\EW{{\Phi}(i,k){\Phi}(j,k)\mid X_i,X_j,Z_{ik},Z_{jk}}
\eqqcolon Z_{ik}Z_{jk}{H}(i,j)
\label{eq:G}\\
\tilde{G}_k(i,j)&=\EW{\tilde{\Phi}(i,k)\tilde{\Phi}(j,k)\mid X_i,X_j,Z_{ik},Z_{jk}}
\eqqcolon Z_{ik}Z_{jk}\tilde{H}(i,j).
\label{eq:tildeG}
\end{align}
Then,

\begin{theorem}\label{theo2}
Assume that for all $\vep>0$
\begin{align}
\frac{1}{n\theta_n^2}\E\Bigl[\bigl(\sum\limits_{j=2}^{n}\Psi_j(1)\bigr)^2\1_{\bigl\{\bigl|\sum\limits_{j=2}^{n}\Psi_j(1)\bigr|\geq\eps\theta_n n\bigr\}}\Bigr]&\nconv 0\label{6}\tag{C1}\\
\theta_n^{-2}\EW{\tilde{\Phi}^2(1,2)\1_{\left\{|\tilde{\Phi}(1,2)|\geq \eps\theta_n n\right\}}}&\nconv 0\label{20}\tag{C2}\\
p\,\theta_n^{-2}\E\bigl[\tilde{H}(1,1)\1_{\bigl\{|\tilde{H}(1,1)|\geq \frac{\eps\theta_n^2 n}{p}\bigr\}}\bigr]&\nconv 0\label{21}\tag{C3}\\
\theta_n^{-4}\EW{G_1^2(2,3)}&\nconv 0\label{23}\tag{C4}
\end{align}
Then again $\frac{\CUn}{\sqrt{\V {\CUn}}}$ converges in distribution to a standard normal random variable.
\end{theorem}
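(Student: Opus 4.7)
The plan is to deduce conditions (B1) and (B2) of Theorem \ref{theo1} from (C1)--(C4) and then invoke Theorem \ref{theo1}. First observe that $(\xi_i)$ is a martingale difference sequence for the filtration $\mathcal{F}_{i-1}=\sigma((X_k)_{k<i},(Z_{l,m})_{l<i,\,m\neq l})$: $\E[\xi_i^{(1)}\mid\mathcal{F}_{i-1}]=0$ because $\E[g(X_i)]=\E[h(X_i,X_2)]=0$ where $g(X_i)=\E[h(X_i,X_2)\mid X_i]$, and $\E[\xi_i^{(2)}\mid\mathcal{F}_{i-1}]=0$ because $\E[\tilde{h}(X_i,X_j)\mid X_j]=0$ by \eqref{eq:htildecentering}. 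Since (B1) and (B2) are convergences in probability of sums of nonnegative (conditional) random variables, it suffices in most places to control their unconditional expectations (and, for (B2), the variance).

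\textbf{Lindeberg condition (B1).} Starting from
\[ \xi_i^2\1_{\{|\xi_i|\geq\eps\}}\leq 2\bigl((\xi_i^{(1)})^2+(\xi_i^{(2)})^2\bigr)\bigl(\1_{\{|\xi_i^{(1)}|\geq\eps/2\}}+\1_{\{|\xi_i^{(2)}|\geq\eps/2\}}\bigr), \]
$\eta_1$ splits into four pieces. The pure $\xi^{(1)}$-piece, by exchangeability in $i$ of the triples $(X_i,(Z_{ij})_{j\neq i})$, collapses to $n\,\E[(\xi_1^{(1)})^2\1_{\{|\xi_1^{(1)}|\geq\eps/2\}}]$, which on substituting $\xi_1^{(1)}=(n\theta_n)^{-1}\sum_{j=2}^n\Psi_j(1)$ is exactly (C1). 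For the pure $\xi^{(2)}$-piece, truncate $\tilde{\Phi}(i,j)$ at level $\delta n\theta_n$: the tail part has $L^2$-mass bounded by $(2\theta_n^2)^{-1}\E[\tilde{\Phi}^2(1,2)\1_{\{|\tilde{\Phi}(1,2)|\geq\delta n\theta_n\}}]\to 0$ by (C2); the bounded part has jumps of size $\leq\delta$, so its contribution is controlled by its conditional second moment, which reduces via $\E[\tilde{\Phi}^2(i,j)\mid X_j,Z_{ij}]=Z_{ij}\tilde{H}(j,j)$ to the quantity appearing in (C3). The two cross terms are handled by Cauchy--Schwarz combined with the above.

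\textbf{Conditional variance (B2).} Decompose
\[ \eta_2=\sum_i\E[(\xi_i^{(1)})^2\mid\mathcal{F}_{i-1}]+2\sum_i\E[\xi_i^{(1)}\xi_i^{(2)}\mid\mathcal{F}_{i-1}]+\sum_i\E[(\xi_i^{(2)})^2\mid\mathcal{F}_{i-1}]. \]
The convergence $\E[\eta_2]\to 1$ is immediate from the martingale-difference property and Lemma \ref{lem:variance}, because $\E[\eta_2]=\sum\E[\xi_i^2]=\V[\sum\xi_i]=\binom{n}{2}^2 \V[\CUn]/(n^2\theta_n^2)\to 1$. The substantive task is $\V[\eta_2]\to 0$. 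The $\xi^{(1)}$-part and the cross-term involve only $g(X_i)$ and Bernoulli counts, so their variance contribution is of lower order. The critical piece is the off-diagonal part of $\sum_i\E[(\xi_i^{(2)})^2\mid\mathcal{F}_{i-1}]$, which equals $\frac{1}{n^2\theta_n^2}\sum_i\sum_{j\neq k,\,j,k<i}Z_{ij}Z_{ik}\tilde{H}(j,k)$. Because $\E[\tilde{H}(j,k)]=0$ for $j\neq k$ and $Z$'s on distinct edges are independent, the only surviving variance/covariance contributions come from index-configurations with $\{j,k\}=\{j',k'\}$; a direct calculation bounds the total by a constant multiple of $p^2\E[\tilde{H}^2(2,3)]/\theta_n^4$. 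Expanding $\tilde{h}=h-g(X_i)-g(X_j)$ gives $\E[\tilde{H}^2]\leq C\E[H^2]+C\beta_n^2\gamma_n^2/p^2+C\gamma_n^4/p^2$, whose last two terms are $o(\theta_n^4/p^2)$ since $\theta_n^2\geq np\gamma_n^2+\beta_n^2/2$ and $np\to\infty$, while the first term equals $p^{-2}\E[G_1^2(2,3)]$ and is handled by (C4). The diagonal part $\frac{1}{n^2\theta_n^2}\sum_j\tilde{H}(j,j)\sum_{i>j}Z_{ij}$ is a sum of independent products whose variance is controlled by (C3).

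\textbf{Main obstacle.} The heart of the proof is the variance estimate for the off-diagonal piece of $\sum_i\E[(\xi_i^{(2)})^2\mid\mathcal{F}_{i-1}]$: carefully tracking which index-sextuples $(i,j,k,i',j',k')$ produce non-vanishing expectations after integrating out the $Z$-Bernoullis, exploiting that $\E[\tilde{H}(j,k)\tilde{H}(j',k')]=0$ unless $\{j,k\}=\{j',k'\}$, and matching the surviving combinatorial factors to the single moment $p^2\E[H^2(2,3)]=\E[G_1^2(2,3)]$ that (C4) forces to be $o(\theta_n^4)$. This is the step where (C4), as opposed to the Lindeberg-style hypotheses (C1)--(C3), plays its decisive role.
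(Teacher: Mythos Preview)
Your overall architecture is the paper's: derive (B1) and (B2) from (C1)--(C4), with (C4) feeding in through the off-diagonal part of $\sum_i\E[(\xi_i^{(2)})^2\mid\mathcal F_{i-1}]$. That part of the proposal is correct and matches the paper almost verbatim (the paper packages it as (C4)$\Rightarrow$(C4') via Proposition~\ref{lem:23to24} and then bounds $\E[\eta_{23}^2]$ by $\theta_n^{-4}\E[\tilde G_1^2(2,3)]$).

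There are, however, two places where your sketch diverges from what actually makes the argument go through.

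\emph{(B1), the $\xi^{(2)}$ piece.} Your truncation heuristic (``bounded jumps of size $\le\delta$'') does not by itself control $\E[(\xi_i^{(2)})^2\1_{\{|\xi_i^{(2)}|\ge\eps'\}}]$, because bounding each $\tilde\Phi(i,j)/(n\theta_n)$ by $\delta$ only gives $|\xi_i^{(2)}|\le (i-1)\delta$, which is useless for $i$ of order $n$. More to the point, you condition on $X_j$ to produce $Z_{ij}\tilde H(j,j)$; that is correct as an identity, but it is the wrong direction for this step. The paper instead expands $(\sum_{j<i}\tilde\Phi(i,j))^2$, peels off one $j$ from the indicator, and then conditions on $X_i$ so that $\E[\tilde\Phi^2(i,j)\mid X_i,Z_{ij}]=Z_{ij}\tilde H(i,i)$ is \emph{common to all $j<i$}. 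This common factor is what allows one to introduce an auxiliary indicator $\1_{\{|\tilde H(i,i)|\ge \tilde\eps\theta_n^2 n/p\}}$ and invoke (C3); the complementary event then reduces to a Chebyshev-type bound on $\sum_{k\ne j}\tilde\Phi(i,k)$. The off-diagonal part $S_4$ is handled by the same trick plus Cauchy--Schwarz. Without pivoting to $\tilde H(i,i)$ you will not be able to close this estimate.

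\emph{(B2), the ``easy'' pieces.} Two of your claims are too optimistic. First, (C3) is a truncated \emph{first} moment of $\tilde H(1,1)$, not a second moment, so ``a sum of independent products whose variance is controlled by (C3)'' does not work directly for the diagonal $\eta_{22}$; the paper truncates $\eta_{22}=\eta_{22}'+\eta_{22}''$ at level $\tilde\eps\theta_n^2 n/p$, kills $\eta_{22}''$ by (C3), and then shows $\V(\eta_{21}+\eta_{22}')\lesssim\tilde\eps$ by computing the joint second moment --- it does \emph{not} show $\V(\eta_{21})$ and $\V(\eta_{22})$ are separately small. Second, the cross term $\eta_{24}=\sum_i\E[\xi_i^{(1)}\xi_i^{(2)}\mid\mathcal F_{i-1}]$ is not merely ``$g(X_i)$ and Bernoulli counts'': after conditioning it carries the factor $\E[g(X_i)\tilde h(i,m)\mid X_m]$, and bounding $\E[\eta_{24}^2]$ ultimately requires $\theta_n^{-4}\E[\tilde G_1^2(2,3)]\to 0$, i.e.\ (C4') again (see the paper's Lemma~\ref{lem:eta24}). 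So (C4) is used in two places, not one.
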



\begin{rem}
It is well known that there are situations where $\CUn$ does not obey a CLT. These are for example situations without dilution, e.g. when the $Z_{ij} \equiv 1$ with probability 1, and if the kernel function $h=h_n$ is degenerate, i.e. when
$\E[h(X,Y) \mid Y]=0$ for independent random variables $X,Y$ with the same distribution as $X_{11}$ (for the time being we assume that the distribution of $X_{ni}$ does not depend on $n$ and $i$, so we simply have a sequence of i.i.d. random variables
$X_1, X_2, \ldots$).

For a typical situations consider $\P(Z_{ij}=1)=1$ and $h(X_1,X_2)= X_1X_2$.
If then, $\E X_1=0$ and $\E X_1^2=1$, the rescaled U-statistic $n \CUn$ does not converge to a normally distributed random variable but to a $\chi_1^2$-random variable. This can be seen by the CLT together with an application of the continuous mapping theorem.
In this situation we quickly check that also condition \eqref{23} breaks down. Indeed, one checks that $\theta_n^2=\frac12$, since $\beta_n^2=1$, and that $\gamma_n^2=0$.
Moreover,
$$G_1(2,3)=H(2,3)=\EW{h(1,2)h(1,3)\mid X_2,X_3}=X_2X_3\EW{X_1^2}=X_2X_3$$
(recall that $Z_{i,j}\equiv 1$).
This means for \eqref{23} that
$$\theta_n^{-4}\EW{G_1^2(2,3)}=4\cdot\EW{X_2^2X_3^2}=4\EW{X_2^2}\EW{X_3^2}=4,$$
which does not go to $0$. Hence \eqref{23} is violated.
\end{rem}

The rest of this note is organized in the following way. In Section 2 we will prove Theorem \ref{theo1}. A main ingredient to this end will be a martingale CLT due to Girko (see Theorem \ref{thm:girkoneu} below). In Section 3 we will prepare for the proof of Theorem \ref{theo2} by giving an alternative condition for \eqref{23} (see condition \eqref{22} below). The core of the paper is the proof of Theorem \ref{theo2} in Section 4. We will see that \eqref{6}-\eqref{23} (resp. \eqref{22}) imply the conditions of Theorem \ref{theo1}. Finally, in Section 5, we will give some alternative conditions for \eqref{6}-\eqref{21} that are easier to check in some applications. The appendix contains the proofs of our technical results.

\section{Proof of Theorem \ref{theo1}}
As mentioned in the Introduction we will base our arguments on the following theorem (\cite[Theorem 5.4.11]{girko1990}):

\begin{theorem}\label{thm:girkoneu}
Consider a triangular array of martingale differences $(Y_{i,n})$, $i=1,\dots,n$ and a sequence of filtrations $(\mathcal{F}_{i,n})_{i=1,\dots,n}$. If for any $\vep>0$
$$\sum\limits_{i=1}^n\EW{Y_{i,n}^2\1_{\{\left|Y_{i,n}\right|\geq\eps\}}\mid\mathcal{F}_{i-1,n}}\Pconv 0 \mbox{ and }
\sum\limits_{i=1}^n\EW{Y_{i,n}^2\mid\mathcal{F}_{i-1,n}}\Pconv 1
$$
hold, then $\sum\limits_{i=1}^nY_{i,n}$ converges in distribution to a standard normal random variable.
\end{theorem}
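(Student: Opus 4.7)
The plan is to show convergence $S_n := \sum_{i=1}^n Y_{i,n} \Rightarrow \mathcal{N}(0,1)$ via L\'evy's continuity theorem, by proving pointwise convergence of the characteristic function $\phi_n(t) := \E[e^{itS_n}] \to e^{-t^2/2}$ for every fixed $t \in \R$. The argument follows the classical McLeish/Brown strategy in three stages: truncation of the martingale differences, a conditional Taylor expansion exploiting the martingale property, and comparison of a finite product with an exponential.

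First, truncation. Fix $\vep > 0$ small and set
\[Y'_{i,n} := Y_{i,n}\1_{\{|Y_{i,n}|<\vep\}} - \E\bigl[Y_{i,n}\1_{\{|Y_{i,n}|<\vep\}}\mid\mathcal{F}_{i-1,n}\bigr],\]
which is again an $(\mathcal{F}_{i,n})$-martingale difference with $|Y'_{i,n}|\leq 2\vep$ almost surely. The hypothesis~B1 implies that $S_n$ and $S'_n := \sum_i Y'_{i,n}$ differ by a quantity tending to zero in probability (one bounds the conditional variance of the difference by the Lindeberg sum), and B1 combined with B2 ensures that $(V'_{i,n})^2 := \E[(Y'_{i,n})^2\mid\mathcal{F}_{i-1,n}]$ still satisfies $\sum_i (V'_{i,n})^2 \Pconv 1$ up to an error vanishing as $\vep\to 0$.

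Second, expansion. The bound $|e^{ix}-1-ix+x^2/2|\leq |x|^3/6$ together with $\E[Y'_{i,n}\mid\mathcal{F}_{i-1,n}]=0$ gives
\[A_{i,n} := \E\bigl[e^{itY'_{i,n}}\mid\mathcal{F}_{i-1,n}\bigr] = 1 - \tfrac{t^2}{2}(V'_{i,n})^2 + R_{i,n}, \qquad |R_{i,n}|\leq \tfrac{|t|^3}{3}\vep\,(V'_{i,n})^2.\]
For $\vep$ small (depending on $t$), each $A_{i,n}$ is nonzero and close to $1$, so the complex-valued process $P_k := \prod_{i\leq k} e^{itY'_{i,n}}/A_{i,n}$ is a well-defined $(\mathcal{F}_{k,n})$-martingale with $\E[P_k\mid\mathcal{F}_{k-1,n}] = P_{k-1}$, yielding the identity $\E[e^{itS'_n}\prod_i A_{i,n}^{-1}]=1$. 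Hence
\[\phi'_n(t) - e^{-t^2/2} = \E\Bigl[e^{itS'_n}\bigl(1 - e^{-t^2/2}\prod_{i=1}^n A_{i,n}^{-1}\bigr)\Bigr],\]
and the problem reduces to showing $\prod_i A_{i,n} \Pconv e^{-t^2/2}$ in a uniformly integrable sense; the uniform integrability of $\prod_i A_{i,n}^{-1}$ follows from $\prod_i |A_{i,n}^{-1}|\leq \exp(C_t \sum_i(V'_{i,n})^2)$ and B2.

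Finally, applying $|\log(1-x)+x|\leq x^2$ for $|x|\leq 1/2$ to the Taylor form, one writes $\prod_i A_{i,n} = \exp\bigl(-\tfrac{t^2}{2}\sum_i (V'_{i,n})^2 + E_n\bigr)$ with $|E_n|\leq C_t\bigl(\vep + \vep^2\sum_i(V'_{i,n})^2\bigr) \Pconv 0$. By B2 the exponent tends in probability to $-t^2/2 + O(\vep)$, so dominated convergence gives $\phi_n(t) \to e^{-t^2/2} + O(\vep)$, and letting $\vep\to 0$ concludes. The main obstacle is the coupled error bookkeeping across the three stages, which must be unwound in the order $n\to\infty$ first and $\vep\to 0$ afterwards; the martingale property enters crucially twice, in killing the linear term of the Taylor expansion and in producing the martingale identity $\E[P_n]=1$, which is what allows the central step to be carried out as an exact identity rather than as an iterated approximation.
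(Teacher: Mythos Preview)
The paper does not prove this statement at all: it is quoted verbatim as \cite[Theorem 5.4.11]{girko1990} and used as a black box, so there is no argument in the paper to compare against. Your sketch follows the classical Brown/McLeish route and is broadly correct in spirit, but there is one genuine gap.

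The problematic step is the uniform integrability claim. You write that $\prod_i |A_{i,n}^{-1}| \le \exp\bigl(C_t\sum_i (V'_{i,n})^2\bigr)$ and then appeal to B2. But B2 only gives $\sum_i (V'_{i,n})^2 \Pconv 1$; convergence in probability of an exponent does not yield uniform integrability of the exponential. Without a deterministic bound on $\sum_i (V'_{i,n})^2$ you cannot pass to the limit inside the expectation $\E\bigl[e^{itS'_n}(1-e^{-t^2/2}\prod_i A_{i,n}^{-1})\bigr]$, and the whole comparison step collapses. (A similar issue lurks in the truncation step: the predictable quadratic variation of $S_n-S'_n$ is bounded by the Lindeberg sum, which tends to zero only in probability; this can be repaired by Lenglart's inequality, but the UI problem cannot be repaired the same way.)

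The standard remedy is a stopping-time localisation: set $\tau_n=\min\{k\le n:\sum_{i\le k}(V'_{i,n})^2>2\}$ (with $\tau_n=n$ if the threshold is never exceeded), work with the stopped array $(Y'_{i,n}\1_{\{i\le\tau_n\}})$, for which $\sum_i (V'_{i,n})^2\1_{\{i\le\tau_n\}}\le 2+4\vep^2$ deterministically, and observe that by B2 the event $\{\tau_n<n\}$ has vanishing probability, so the stopped and unstopped sums agree with high probability. With this device in place your product/exponential comparison goes through and the rest of the outline is fine.
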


To apply this result let
$\mathcal{F}_i=\mathcal{F}_{i,n}=\sigma\big((X_k)_{k=1,\dots,i},(Z_{l,m})_{\substack{l=1,\dots,i,\phantom{+m\neq l}\\m=1,\dots,n,\,m\neq l}}\big)$.
Using the notation from Section 1, $\Psi_j(i)$ is $\mathcal{F}_i$-measurable, hence $\xi_i^{(1)}$ is $\mathcal{F}_i$-adapted. For $j<i$, $X_j$ is also $\mathcal{F}_i$-measurable, so that $\tilde{\Phi}(i,j)$ and therefore $\xi_i^{(2)}$ are also $\mathcal{F}_i$-adapted. Hence $\xi_i$ is $\mathcal{F}_i$-adapted.
Now, 
\begin{align*}
\EW{\xi_i\mid\mathcal{F}_{i-1}}
&=\frac{1}{n\theta_n}\bigl(\sum\limits_{j< i}\EW{ Z_{ij}\EW{h(i,j)\mid X_i}\mid\mathcal{F}_{i-1}}+\sum\limits_{j> i}\EW{Z_{ij}\EW{h(i,j)\mid X_i}\mid\mathcal{F}_{i-1}}\bigr.\\
&\hspace{2cm}+\bigl.\sum\limits_{j<i}\EW{Z_{ij}\tilde{h}(i,j)\mid\mathcal{F}_{i-1}}\bigr).
\end{align*}
By definition, for $j<i$, $Z_{ij}=Z_{ji}$ are $\mathcal{F}_{i-1}$-measurable, while $X_i$ is independent of $\mathcal{F}_{i-1}$. For the second term notice that both $X_i$ and $Z_{ij}$ are independent of $\mathcal{F}_{i-1}$ in the case $j>i$. In the third sum, the $Z_{i,j}$ is measurable again. This leaves only $\tilde{h}(i,j)$, which is independent of all but one condition: $X_j$. Therefore
\begin{align*}
&\EW{\xi_i\mid\mathcal{F}_{i-1}}\\
&
=\frac{1}{n\theta_n}\left(\sum\limits_{j< i}Z_{ij}\EW{h(i,j)}+\sum\limits_{j>i}\EW{Z_{ij}}\EW{h(i,j)}+\sum\limits_{j<i}Z_{ij}\EW{\tilde{h}(i,j)\mid X_j}\right)=0
\end{align*}
by 
\eqref{eq:hmoments} and \eqref{eq:htildecentering}. Thus
$\xi_i$ is a martingale difference. Setting $Y_i=Y_{i,n}=\xi_{i,n}$ in Theorem \ref{thm:girkoneu} we can rewrite
conditions a) and b) in this theorem as
\begin{equation*}	\eta_1=\sum\limits_{i=1}^{n}\E\bigl[\xi_i^2\1_{\{\left|\xi_i\right|\geq\eps\}}
\mid(X_k)_{k=1,\dots,i-1},(Z_{l,m})_{\substack{l=1,\dots,i-1,\phantom{nnl}\\m=1,\dots,n,\,m\neq l}}\bigr]\Pconv 0,
\end{equation*}
\begin{equation*} \eta_2=\sum\limits_{i=1}^{n}\E\bigl[\xi_i^2\mid(X_k)_{k=1,\dots,i-1},
(Z_{l,m})_{\substack{l=1,\dots,i-1,\phantom{nnl}\\m=1,\dots,n,\,m\neq l}}\bigr]\Pconv 1.
\end{equation*}
This proves Theorem \ref{theo1}.

\section{An alternative condition for \eqref{23}}
The purpose of this section is to prove
\begin{prop}\label{lem:23to24}
\eqref{23} implies
\begin{equation}
\theta_n^{-4}\EW{\tilde{G}_1^2(2,3)}\nconv 0.\label{22}\tag{C4'}
\end{equation}
\end{prop}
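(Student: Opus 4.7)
The plan is to express $\tilde{G}_1(2,3)$ as $G_1(2,3)$ plus an explicit error term whose $L^2$-norm vanishes after division by $\theta_n^4$. Writing $\phi(x):=\E[h_n(x,X_{n1})]$, symmetry of $h$ gives $\tilde{h}(X_i,X_1)=h(X_i,X_1)-\phi(X_i)-\phi(X_1)$ for $i\in\{2,3\}$. Expanding the product $\tilde{h}(X_2,X_1)\tilde{h}(X_3,X_1)$ and taking the conditional expectation $\E[\,\cdot\mid X_2,X_3]$, using the independence of $X_1$ from $(X_2,X_3)$, the vanishing $\E[\phi(X_1)]=\E[h(X_1,X_2)]=0$, and $\E[\phi(X_1)^2]=\gamma_n^2/p$ from Lemma~\ref{lem:lem phi und h}, one finds
\begin{equation*}
\tilde{H}(2,3)=H(2,3)-\phi(X_2)\phi(X_3)-\eta(X_2)-\eta(X_3)+\gamma_n^2/p,
\end{equation*}
where $\eta(x):=\E[\phi(X_1)h(x,X_1)]$. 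Multiplying by $Z_{21}Z_{31}$ (independent of the $X$'s) yields $\tilde{G}_1(2,3)=G_1(2,3)-R$ with $R:=Z_{21}Z_{31}\bigl(\phi(X_2)\phi(X_3)+\eta(X_2)+\eta(X_3)-\gamma_n^2/p\bigr)$.

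The next step is to control $\E[R^2]$. Applying $(a+b+c+d)^2\le 4(a^2+b^2+c^2+d^2)$ and independence of the $Z$'s from the $X$'s,
\begin{equation*}
\E[R^2]\le 4p^2\bigl(\E[\phi(X)^2]^2+2\,\E[\eta(X)^2]+(\gamma_n^2/p)^2\bigr).
\end{equation*}
By Cauchy--Schwarz and Lemma~\ref{lem:lem phi und h}, $\E[\eta(X)^2]\le\E[\phi(X_1)^2]\cdot\E[h(X_2,X_1)^2]=\gamma_n^2\beta_n^2/p^2$, whence $\E[R^2]\le C(\gamma_n^4+\gamma_n^2\beta_n^2)$ for an absolute constant $C$. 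From $\theta_n^2=np\gamma_n^2+\beta_n^2/2$ one reads off $\theta_n^4\ge(np\gamma_n^2)^2$ and, by AM--GM, $\theta_n^4\ge 2np\gamma_n^2\beta_n^2$, so that
\begin{equation*}
\theta_n^{-4}\E[R^2]\le C\bigl((np)^{-2}+(np)^{-1}\bigr)\nconv 0,
\end{equation*}
since $np\to\infty$ by assumption.

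Finally, combining $(\tilde{G}_1(2,3))^2\le 2G_1^2(2,3)+2R^2$ with the above estimate and hypothesis~\eqref{23} gives $\theta_n^{-4}\E[\tilde{G}_1^2(2,3)]\to 0$, i.e.~\eqref{22}. The only mildly delicate point is the bookkeeping in the first paragraph: one must carefully track which $X_1$-averages vanish (thanks to $\E[\phi(X_1)]=0$) and which contribute the cross-term $\eta$ or the constant $\gamma_n^2/p$; the subsequent bounds are routine applications of Lemma~\ref{lem:lem phi und h} together with the trivial lower estimates on $\theta_n^2$.
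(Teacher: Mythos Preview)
Your proof is correct and follows essentially the same route as the paper's. The paper expands $\tilde{G}_k(i,j)$ in terms of $G_k(i,j)$, $\Psi_k(i)\Psi_k(j)$, and three conditional-expectation cross terms $A,B,C$; unwinding definitions, those are exactly your $Z_{21}Z_{31}\phi(X_2)\phi(X_3)$, $Z_{21}Z_{31}\eta(X_3)$, $Z_{21}Z_{31}\eta(X_2)$, and $Z_{21}Z_{31}\gamma_n^2/p$, and the subsequent moment bounds (via Cauchy--Schwarz, Lemma~\ref{lem:lem phi und h}, and the elementary lower bounds on $\theta_n^2$) coincide up to numerical constants.
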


\begin{proof}
For $i,j,k$ pairwise different, we have by definition of $\tilde G_k$
	\begin{align}
	&\tilde{G}_k(i,j)
	=\mathbb{E}\bigg[\Phi(i,k)\Phi(j,k)-\Psi_k(i)\Phi(j,k)-\Psi_i(k)\Phi(j,k)-\Psi_k(j)\Phi(i,k)-\Psi_j(k)\Phi(i,k)\notag\\ &\phantom{=E}\left.+\Psi_k(i)\Psi_k(j)+\Psi_k(i)\Psi_j(k)\right.
+\Psi_i(k)\Psi_k(j)+\Psi_i(k)\Psi_j(k)\mid X_i,X_j,Z_{i,k},Z_{j,k}\bigg]\notag\\
	=&G_k(i,j)-\Psi_k(i)\Psi_k(j)-\EW{\Psi_i(k)\Phi(j,k)\mid X_i,X_j,Z_{i,k},Z_{j,k}}-\Psi_k(i)\Psi_k(j)\notag\\
	&-\EW{\Psi_j(k)\Phi(i,k)\mid X_i,X_j,Z_{i,k},Z_{j,k}}
+\Psi_k(i)\Psi_k(j)+\Psi_k(i)\EW{\Psi_j(k)\mid X_i,X_j,Z_{i,k},Z_{j,k}}\notag\\
	&+\Psi_k(j)\EW{\Psi_i(k)\mid X_i,X_j,Z_{i,k},Z_{j,k}}+\EW{\Psi_i(k)\Psi_j(k)\mid X_i,X_j,Z_{i,k},Z_{j,k}}\notag
	\end{align}

	Now $\EW{\Psi_j(k)\mid X_i,X_j,Z_{i,k},Z_{j,k}}=0$, and three of the above terms only differ by their sign. Thus
	\begin{align}
	&\tilde{G}_k(i,j)	
	= G_k(i,j)-\Psi_k(i)\Psi_k(j)-\EW{\Psi_i(k)\Phi(j,k)\mid  X_i,X_j,Z_{i,k},Z_{j,k}}\notag\\
	&\phantom{=E}-\EW{\Psi_j(k)\Phi(i,k)\mid  X_i,X_j,Z_{i,k},Z_{j,k}}+\EW{\Psi_i(k)\Psi_j(k)\mid  X_i,X_j,Z_{i,k},Z_{j,k}}\notag\\
	&\eqqcolon G_k(i,j)-\Psi_k(i)\Psi_k(j)-A-B+C.\label{tildegsep}
	\end{align}
	By independence, $\EW{\left(\Psi_k(i)\Psi_k(j)\right)^2}=\EW{\Psi_k^2(i)}\EW{\Psi_k^2(j)}=\gamma_n^4$, and
 for $A$, we have again by independence and Cauchy-Schwarz
	\begin{align*}
	\EW{A^2}&=\EW{Z_{i,k}Z_{j,k}\left(\EW{\EW{h(i,k)\mid X_k}h(j,k)\mid X_j}\right)^2}
	\\&
	\leq p^2\EW{\EW{\EW{h(i,k)\mid X_k}^2\mid X_j}\EW{h(j,k)^2\mid X_j}}
	\\&
	= p^2\EW{\EW{\EW{h(i,k)\mid X_k}^2}\EW{h^2(j,k)\mid X_j}}\\
	\intertext{and by Lemma \ref{lem:lem phi und h} this equals}
	&= p^2\EW{\frac{\gamma_n^2}{p}\EW{h^2(j,k)\mid X_j}}= p^2\frac{\gamma_n^2}{p}\EW{h^2(j,k)}
	= p^2\frac{\gamma_n^2}{p}\frac{\beta_n^2}{p}=\beta_n^2\gamma_n^2.
	\end{align*}
	$B^2$ has the same upper bound for the expectation, which can be proven analogously. As for $C$, we  again use measurability and independence to obtain
	\begin{align*}
	\EW{C^2}&=\EW{\EW{\Psi_i(k)\Psi_j(k)\mid  X_i,X_j,Z_{i,k},Z_{j,k}}^2}
	=\EW{\EW{\Psi_i(k)\Psi_j(k)\mid  Z_{i,k},Z_{j,k}}^2}\\
	&=\EW{Z_{i,k}Z_{j,k}\EW{\EW{h(i,k)\mid X_k}\EW{h(j,k)\mid X_k}}^2}\\
	&\leq p^2\EW{\EW{h(i,k)\mid X_k}^2}^2=p^2\left(\frac{\gamma_n^2}{p}\right)^2=\gamma_n^4
	\end{align*}
	by Cauchy-Schwarz and Lemma \ref{lem:lem phi und h}.
	
	We can combine all this to conclude
	\begin{align*}	\frac{1}{\theta_n^4}\EW{\tilde{G}_1^2(2,3)}&\leq\frac{1}{\theta_n^4}\EW{\left(G_1(2,3)-\Psi_1(2)\Psi_1(3)-A-B+C\right)^2}\\
	&\leq \frac{25}{\theta_n^4}\left(\EW{G_1^2(2,3)}+\EW{\Psi_1^2(2)\Psi_1^2(3)}+\EW{A^2}+\EW{B^2}+\EW{C^2}\right)\\
	&\leq \frac{25}{\theta_n^4}\EW{G_1^2(2,3)}+\frac{25}{\theta_n^4}2\gamma_n^4+\frac{25}{\theta_n^4}2\beta_n^2\gamma_n^2
\leq \frac{25}{\theta_n^4}\EW{G_1^2(2,3)}+\frac{50}{(np)^2}+\frac{50}{np}
	\end{align*}
due to \eqref{defn}, $\theta_n^2\geq np\gamma_n^2$ and $\theta_n^2\geq\beta_n^2$.
	The second and third term go to 0 as $np\to\infty$ for $n\to\infty$. The first term is exactly the term from \eqref{23}, which is assumed to converge to 0 as well. This completes the proof.
\end{proof}

\section{Proof of Theorem \ref{theo2}}
The proof of Theorem \ref{theo2} immediately follows from
\begin{prop}\label{prop:cond1}
\eqref{16} and \eqref{17} follow from \eqref{6}, \eqref{20}, \eqref{21}, and \eqref{23}.

%
\end{prop}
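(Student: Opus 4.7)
The plan is to reduce \eqref{16} to an unconditional $L^1$-statement and to establish \eqref{17} via a mean-variance argument, throughout splitting $\xi_i=\xi_i^{(1)}+\xi_i^{(2)}$.

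For \eqref{16}, since the nonnegative random variables $\sum_i\E[\xi_i^2\1_{\{|\xi_i|\geq\eps\}}\mid\mathcal{F}_{i-1}]$ converge to $0$ in probability whenever they converge in $L^1$, it suffices to show $\sum_i\E[\xi_i^2\1_{\{|\xi_i|\geq\eps\}}]\to 0$. I would then combine $\1_{\{|\xi_i|\geq\eps\}}\leq\1_{\{|\xi_i^{(1)}|\geq\eps/2\}}+\1_{\{|\xi_i^{(2)}|\geq\eps/2\}}$ with $\xi_i^2\leq 2(\xi_i^{(1)})^2+2(\xi_i^{(2)})^2$, producing four contributions. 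The diagonal sum $\sum_i\E[(\xi_i^{(1)})^2\1_{\{|\xi_i^{(1)}|\geq\eps/2\}}]$ equals $n$ times the single-$i$ expectation by identity-in-distribution and rearranges directly to \eqref{6}. The diagonal sum $\sum_i\E[(\xi_i^{(2)})^2\1_{\{|\xi_i^{(2)}|\geq\eps/2\}}]$ is handled by expanding the square, using orthogonality of the $\tilde\Phi(i,j)$ in $j$ that follows from \eqref{eq:htildecentering}, and applying \eqref{20} to the surviving diagonal. The cross indicator terms I would handle via $\1_{\{|\xi_i^{(2)}|\geq\eps/2\}}\leq 4(\xi_i^{(2)})^2/\eps^2$ followed by Cauchy-Schwarz, reducing them to fourth-moment quantities already controlled by \eqref{21} and \eqref{23}.

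For \eqref{17}, I would first verify that $\E[\sum_i\E[\xi_i^2\mid\mathcal{F}_{i-1}]]=\sum_i\E[\xi_i^2]\to 1$. The cross sum $\sum_i\E[\xi_i^{(1)}\xi_i^{(2)}]$ vanishes by the Hoeffding-type orthogonality implied by \eqref{eq:htildecentering} (since $\E[\tilde h(i,j)\mid X_i]=0$), and the remaining two pieces recombine to $1$ via Lemma \ref{lem:variance}. By Chebyshev it then suffices to show $\V(\sum_i\E[\xi_i^2\mid\mathcal{F}_{i-1}])\to 0$. A direct computation using \eqref{eq:tildeG} yields
\[\E[(\xi_i^{(2)})^2\mid\mathcal{F}_{i-1}]=\frac{1}{n^2\theta_n^2}\sum_{j,k<i}Z_{ij}Z_{ik}\tilde{H}(j,k),\]
together with an analogous identity for $(\xi_i^{(1)})^2$ featuring $H(j,k)$ via \eqref{eq:G}. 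The diagonal $j=k$ pieces are linear in $\tilde H(j,j)$ and $H(j,j)$, whose centered second-moment contribution to the variance is controlled by \eqref{21} via a truncation split at the scale $\eps\theta_n^2 n/p$. The off-diagonal $j\neq k$ pieces, after summation in $i$ and exploitation of the independence of distinct $Z_{ab}$'s, produce a variance proportional to $\theta_n^{-4}\E[\tilde G_1^2(2,3)]$, i.e.\ \eqref{22}, which is equivalent to \eqref{23} by Proposition \ref{lem:23to24}. The cross $\xi_i^{(1)}\xi_i^{(2)}$ conditional-variance contribution is handled analogously by iterating the conditioning and invoking the definition of $\tilde G$.

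The main obstacle will be the combinatorial bookkeeping in the variance computation for \eqref{17}: one must identify exactly which index patterns in the resulting four-fold sum survive the averaging over the independent Bernoulli weights $Z_{ij}$, and then pair each surviving pattern either with \eqref{21} (diagonal-in-$(j,k)$ contributions, where a truncation argument is needed), with \eqref{22} (genuinely off-diagonal contributions, where Proposition \ref{lem:23to24} supplies the required bound on $\E[\tilde G_1^2]$ from the assumed bound on $\E[G_1^2]$), or with the already-available asymptotics of Lemma \ref{lem:variance}. The reduction for \eqref{16} is more mechanical; its only subtle point is the cross-indicator estimate, where turning the truncation into a Cauchy-Schwarz fourth-moment bound is the natural move.
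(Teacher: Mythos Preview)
Your overall plan matches the paper's: reduce \eqref{16} to an $L^1$ bound after splitting $\xi_i=\xi_i^{(1)}+\xi_i^{(2)}$, and prove \eqref{17} by a mean--variance argument on the conditional second moments. Two of your steps, however, do not go through as stated.

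\medskip
\textbf{The term $\sum_i\E\bigl[(\xi_i^{(2)})^2\1_{\{|\xi_i^{(2)}|\ge\eps/2\}}\bigr]$.} You write that it is handled by ``expanding the square, using orthogonality of the $\tilde\Phi(i,j)$ in $j$, and applying \eqref{20} to the surviving diagonal''. This fails: the indicator $\1_{\{|\sum_{l<i}\tilde\Phi(i,l)|\ge\eps\theta_n n/2\}}$ depends on \emph{all} the $\tilde\Phi(i,l)$, so for $j\neq k$ the expectation $\E\bigl[\tilde\Phi(i,j)\tilde\Phi(i,k)\1\bigr]$ does \emph{not} vanish, and for $j=k$ the indicator is not the one in \eqref{20}. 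The paper's proof of this part (their $T_1$) is considerably longer: one first splits according to whether $|\tilde\Phi(i,j)|$ itself is large (this is where \eqref{20} enters), then for the ``small $\tilde\Phi$'' piece one introduces a second truncation on $\tilde H(i,i)$ at level $\tilde\eps\theta_n^2 n/p$, uses \eqref{21} to control the ``large $\tilde H$'' contribution, and for the ``small $\tilde H$'' contribution bounds the off-diagonal sum $\sum_{j\ne k}\tilde\Phi(i,j)\tilde\Phi(i,k)\1$ via Cauchy--Schwarz against the probability of $\{|\xi_i^{(2)}|\ge\eps/2\}$. Condition \eqref{21} is indispensable here; \eqref{20} alone does not suffice. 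Relatedly, your proposed treatment of the cross indicator terms by $\1\le 4(\xi_i^{(2)})^2/\eps^2$ and Cauchy--Schwarz leads to $\E[(\xi_i^{(2)})^4]$, which contains $\E[\tilde\Phi^4(i,j)]$; no fourth-moment assumption is available, so this route is blocked. The paper avoids fourth moments of $\tilde\Phi$ entirely by always truncating $\tilde H(i,i)$ first.

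\medskip
\textbf{The structure of $\E[(\xi_i^{(1)})^2\mid\mathcal F_{i-1}]$.} You say it features $H(j,k)$ via \eqref{eq:G}. It does not: since $\Psi_j(i)=Z_{ij}\E[h(i,j)\mid X_i]$ and the inner conditional expectation does not depend on $j$, one gets $\E[\Psi_j(i)\Psi_k(i)\mid\mathcal F_{i-1}]=(\gamma_n^2/p)\cdot\E[Z_{ij}Z_{ik}\mid\mathcal F_{i-1}]$, a constant times a Bernoulli expression. The variance of the resulting sum (the paper's $\eta_{21}$) is then controlled by elementary moment computations for the $Z$'s, not by \eqref{21} or \eqref{23}. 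This is a less serious issue than the first, since the correct computation is in fact simpler than what you propose; but your description of which hypothesis controls which piece is off.
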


\begin{rem}
Conditions \eqref{6}--\eqref{23} may be tricky to check. In fact, in many settings it may be unreasonable to prove conditions for $\tilde{\Phi}$ instead of $\Phi$ etc. In Proposition \ref{prop:altcond} below we will give alternative, more straightforward conditions for \eqref{6}--\eqref{21}. 
\end{rem}

We split the proof of Proposition \ref{prop:cond1} into two Lemmas:

\begin{lem}
\eqref{16} follows from \eqref{6}, \eqref{20}, \eqref{21}, \eqref{23} and \eqref{22}.
\end{lem}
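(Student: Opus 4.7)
The plan is to split $\xi_i^2\1_{\{|\xi_i|\geq\eps\}}$ along the decomposition $\xi_i=\xi_i^{(1)}+\xi_i^{(2)}$ and treat the two resulting Lindeberg sums by different methods. Starting from the elementary inequality $(a+b)^2\1_{\{|a+b|\geq\eps\}}\le 4a^2\1_{\{|a|\geq\eps/2\}}+4b^2\1_{\{|b|\geq\eps/2\}}$, applied pointwise and then followed by $\E[\,\cdot\mid\mathcal{F}_{i-1}]$, one obtains $\eta_1\le 4\eta_1^{(1)}+4\eta_1^{(2)}$ where $\eta_1^{(k)}=\sum_{i=1}^n\E[(\xi_i^{(k)})^2\1_{\{|\xi_i^{(k)}|\geq\eps/2\}}\mid\mathcal{F}_{i-1}]$. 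Since $L^1$-convergence implies convergence in probability, it is enough to show $\E\eta_1^{(k)}\to 0$ for $k=1,2$.

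The first sum is straightforward: because the triangular array is i.i.d.\ in $i$ and the $Z_{ij}$ are exchangeable, $\xi_i^{(1)}=\frac{1}{n\theta_n}\sum_{j\neq i}\Psi_j(i)$ has the same distribution as $\xi_1^{(1)}$ for every $i$, so $\E\eta_1^{(1)}=n\E[(\xi_1^{(1)})^2\1_{\{|\xi_1^{(1)}|\geq\eps/2\}}]$, which after substituting the definition of $\xi_1^{(1)}$ becomes $\frac{1}{n\theta_n^2}\E[(\sum_{j=2}^n\Psi_j(1))^2\1_{\{|\sum_{j=2}^n\Psi_j(1)|\geq\eps n\theta_n/2\}}]$; this is precisely the expression in \eqref{6} (with $\eps/2$ in place of $\eps$) and therefore tends to zero.

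The second sum is the main obstacle. Writing $\xi_i^{(2)}=\frac{1}{n\theta_n}\sum_{j<i}\tilde\Phi(i,j)$ (a sum that is conditionally independent and mean-zero given $X_i$ and $\mathcal{F}_{i-1}$, thanks to $\E[\tilde h(i,j)\mid X_i]=0$), I would truncate each summand at level $T=\delta n\theta_n$ for small $\delta>0$, writing $\tilde\Phi(i,j)=\tilde\Phi_{\le T}(i,j)+\tilde\Phi_{>T}(i,j)$ and accordingly $\xi_i^{(2)}=U_i+V_i$. Re-applying the splitting inequality reduces the task to showing $\sum_i\E V_i^2\to 0$ (the tail contribution), which after using $\E[\tilde h(i,j)\mid X_i]=0$ to collapse the cross terms (truncation breaks centering, so the conditional-centering trick must be applied carefully) becomes a constant multiple of the left-hand side of \eqref{20}; and $\sum_i\E[U_i^2\1_{\{|U_i|\ge\eps/4\}}]\to 0$ (the bounded contribution), which via Chebyshev--Markov $\E[U_i^2\1_{\{|U_i|\ge\eps/4\}}]\le 16\eps^{-2}\E U_i^4$ reduces to controlling the fourth moment. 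Expanding $U_i^4$ over $(j_1,j_2,j_3,j_4)$ and discarding groupings that vanish by $\E[\tilde h(i,\cdot)\mid X_i]=0$, the two surviving contributions are a diagonal term, bounded by $T^2\sum_{j<i}\E\tilde\Phi^2(i,j)\sim (i-1)T^2 p\,\E\tilde H(1,1)$, whose smallness after division by $n^4\theta_n^4$ is exactly what \eqref{21} guarantees once $T$ is matched to the cutoff $\eps n\theta_n^2/p$, and a two-pair term $\sum_{j\neq k}\E[\tilde\Phi^2(i,j)\tilde\Phi^2(i,k)]$ which, after conditioning on $X_i$, is of order $n^2\E\tilde G_1^2(2,3)$ and is controlled by \eqref{22}.

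The hardest step will be aligning the three truncation thresholds --- the Lindeberg cutoff $\eps/2$ on $\xi_i^{(2)}$, the summand cutoff $T=\delta n\theta_n$ matched to \eqref{20}, and the variance cutoff $\eps n\theta_n^2/p$ matched to \eqref{21} --- so that the four conditions are invoked at mutually compatible levels, and in verifying that the identity $\E[\tilde h(i,j)\mid X_i]=0$ really does kill every one of the fourth-moment groupings that is neither diagonal nor two-pair, so that only the two contributions controlled by \eqref{21} and \eqref{22} survive.
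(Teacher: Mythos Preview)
Your decomposition into $\eta_1^{(1)}$ and $\eta_1^{(2)}$ and the handling of $\eta_1^{(1)}$ via \eqref{6} are fine and match the paper. The gap is in the fourth-moment argument for $\eta_1^{(2)}$.

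First, a misidentification. The two-pair contribution $\sum_{j\neq k}\E[\tilde\Phi^2(i,j)\tilde\Phi^2(i,k)]$, after conditioning on $X_i,Z_{ij},Z_{ik}$, equals $p^2\E[\tilde H(i,i)^2]$, where $\tilde H(i,i)=\E[\tilde h^2(i,j)\mid X_i]$ is the \emph{diagonal} object. Condition \eqref{22} controls $\E[\tilde G_1^2(2,3)]=p^2\E[\tilde H(2,3)^2]$ with $\tilde H(2,3)=\E[\tilde h(1,2)\tilde h(1,3)\mid X_2,X_3]$, a genuinely different quantity. No hypothesis in the theorem bounds $\E[\tilde H(1,1)^2]$; under only \eqref{eq:hmoments} this second moment of a conditional variance can be infinite. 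So your fourth-moment bound does not close. (Incidentally, the paper's proof of this particular lemma never uses \eqref{23} or \eqref{22} at all; they enter only in the companion lemma for \eqref{17}.)

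Second, the truncation problem is more than a nuisance. Once you replace $\tilde\Phi$ by $\tilde\Phi_{\le T}$ or $\tilde\Phi_{>T}$, the identity $\E[\tilde h(i,j)\mid X_i]=0$ is lost, so the off-diagonal terms in $\sum_i\E V_i^2$ do not vanish (a Cauchy--Schwarz bound leaves a factor of $np$ too many), and the $(j,j,j,k)$-type groupings in $\E U_i^4$ do not vanish either. You flag this but offer no remedy.

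The paper's route avoids both issues by \emph{not} truncating the summands or passing to a fourth moment. It expands $(\sum_{j<i}\tilde\Phi(i,j))^2$ under the Lindeberg indicator into diagonal and off-diagonal parts, then inserts an auxiliary indicator $\1_{\{|\tilde H(i,i)|\ge\tilde\eps\theta_n^2 n/p\}}$ \emph{inside} the expectation before any squaring. On $\{|\tilde H(i,i)|<\tilde\eps\theta_n^2 n/p\}$ one copy of $\tilde H(i,i)$ is bounded deterministically and only a first moment $\E[\tilde H(1,1)]=(\beta_n^2-2\gamma_n^2)/p$ is needed; on the complement, \eqref{21} applies directly. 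This indicator-splitting on $\tilde H(i,i)$, combined with Cauchy--Schwarz and Markov for the remaining off-diagonal piece, is the missing ingredient that makes the argument go through with only second-moment assumptions on $h$.
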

\begin{proof}
Since $\eta_1$ is non-negative, by Markov's inequality, it suffices to show $\EW{\eta_1}\nconv 0$ to obtain $\eta_1\Pconv 0$.
Using Lemma \ref{lem:quadratabsch} with $k=2$, $a_1=\xi_i^{(1)}$, and $a_2=\xi_i^{(2)}$ we get
\begin{align*}\eta_1
&\leq 4\sum\limits_{i=1}^{n}\left(\EW{\left(\xi_i^{(1)}\right)^2\1_{\left\{\left|\xi_i^{(1)}\right|\geq\frac{\eps}{2}\right\}}\mid(X_k)_{k=1,\dots,i-1},(Z_{l,m})_{\substack{l=1,\dots,i-1,\phantom{nnl}\\m=1,\dots,n,\,m\neq l}}}\right.\\
&\phantom{=E}+\EW{\left.\left(\xi_i^{(2)}\right)^2\1_{\left\{\left|\xi_i^{(2)}\right|\geq\frac{\eps}{2}\right\}}\mid(X_k)_{k=1,\dots,i-1},(Z_{l,m})_{\substack{l=1,\dots,i-1,\phantom{nnl}\\m=1,\dots,n,\,m\neq l}}}\right),
\end{align*}
and consequently,
\begin{align}
\begin{split}
\EW{\eta_1}
&\leq 4\sum\limits_{i=1}^{n}\E\Bigl[\bigl(\frac{1}{n\theta_n}\sum_{\substack{j=1\\j\neq i}}^{n}\Psi_j(i)\bigr)^2\1_{\bigl\{\bigl|\frac{1}{n\theta_n}\sum\limits_{j=1,j\neq i}^{n}\Psi_j(i)\bigr|\geq\frac{\eps}{2}\bigr\}}\Bigr]\\
&\phantom{=E}
+4\sum\limits_{i=1}^{n}\E\Bigl[\bigl(\frac{1}{n\theta_n}\sum\limits_{j=1}^{i-1}\tilde{\Phi}(i,j)\bigr)^2\1_{\bigl\{\bigl|\frac{1}{n\theta_n}
\sum\limits_{j=1}^{i-1}\tilde{\Phi}(i,j)\bigr|\geq\frac{\eps}{2}\bigr\}}\Bigr]
\eqqcolon S_1+T_1.
\label{eweta1}
\end{split}
\end{align}
Now,
\begin{align*}
S_1&=\frac{4}{n^2\theta_n^2}\sum\limits_{i=1}^{n}\E \Bigl[\bigl(\sum_{j=1, j\neq i}^{n}\Psi_j(i)\bigr)^2\1_{\bigl\{\bigl|\sum\limits_{j=1,j\neq i}^{n}\Psi_j(i)\bigr|\geq\frac{\eps\theta_n n}{2}\bigr\}}\Bigr]
=\frac{4}{n\theta_n^2}\E\Bigl[\bigl(\sum_{j=2}^{n}\Psi_j(1)\bigr)^2\1_{\bigl\{\bigl|\sum\limits_{j=2}^{n}\Psi_j(1)\bigr|\geq\frac{\eps\theta_n n}{2}\bigr\}}\Bigr]
\end{align*}
by identical distribution. Hence, by \eqref{6}, we have $S_1\nconv 0.$

The estimate for $T_1$ is slightly longer:
\begin{align*}
T_1
&=\frac{4}{n^2\theta_n^2}\sum\limits_{i=1}^{n}\E\Bigl[\bigl(\sum\limits_{j=1}^{i-1}\tilde{\Phi}(i,j)\bigr)^2
\1_{\bigl\{\bigl|\sum\limits_{j=1}^{i-1}\tilde{\Phi}(i,j)\bigr|\geq\frac{\eps\theta_n n}{2}\bigr\}}\Bigr]\\
&=\frac{4}{n^2\theta_n^2}\sum\limits_{i=1}^{n}\sum\limits_{j=1}^{i-1}
\E \Bigl[ \tilde{\Phi}^2(i,j)\1_{\bigl\{\bigl|\sum\limits_{k=1}^{i-1}\tilde{\Phi}(i,k)\bigr|\geq\frac{\eps\theta_n n}{2}\bigr\}}\1_{\bigl\{\bigl|\tilde{\Phi}(i,j)\bigr|\geq\frac{\eps\theta_n n}{4}\bigr\}}\Bigr]\\
&\phantom{=E}+\frac{4}{n^2\theta_n^2}\sum\limits_{i=1}^{n}\sum\limits_{j=1}^{i-1}\E\Bigl[\tilde{\Phi}^2(i,j)
\1_{\bigl\{\bigl|\sum\limits_{k=1}^{i-1}\tilde{\Phi}(i,k)\bigr|\geq\frac{\eps\theta_n n}{2}\bigr\}}\1_{\bigl\{\bigl|\tilde{\Phi}(i,j)\bigr|<\frac{\eps\theta_n n}{4}\bigr\}}\Bigr]\\
&\phantom{=E}+\frac{4}{n^2\theta_n^2}\sum\limits_{i=1}^{n}\Bigl|\E\Bigl[\sum\limits_{1\le j\neq k \le i-1} \tilde{\Phi}(i,j)\tilde{\Phi}(i,k)\1_{\bigl\{\bigl|\sum\limits_{j=1}^{i-1}\tilde{\Phi}(i,j)\bigr|\geq\frac{\eps\theta_n n}{2}\Bigr\}}\Bigr]\Bigr|\\
&\leq\frac{4}{\theta_n^2}\EW{\tilde{\Phi}^2(1,2)\1_{\left\{\left|\tilde{\Phi}(1,2)\right|\geq\frac{\eps\theta_n n}{4}\right\}}}
+\frac{4}{n^2\theta_n^2}\sum\limits_{1\le j<i\le n}
\E\Bigl[\tilde{\Phi}^2(i,j)
\1_{\bigl\{\bigl|\sum\limits_{k=1}^{i-1}\tilde{\Phi}(i,k)\bigr|\geq\frac{\eps\theta_n n}{2}\bigl\}}\1_{\bigl\{\bigl|\tilde{\Phi}(i,j)\bigr|<\frac{\eps\theta_n n}{4}\bigr\}}\Bigr]
\\&\phantom{=E}
+\frac{4}{n^2\theta_n^2}\sum\limits_{i=1}^{n}\Bigl|\E\Bigl[\sum\limits_{1\le j\neq k\le i-1} \tilde{\Phi}(i,j)\tilde{\Phi}(i,k)\1_{\bigl\{\bigl|\sum\limits_{j=1}^{i-1}\tilde{\Phi}(i,j)\bigr|\geq\frac{\eps\theta_n n}{2}\bigr\}} \Bigr]\Bigr|\\
&\eqqcolon S_2+S_3+S_4
\end{align*}
By \eqref{20}, we find that
$S_2\nconv 0.$
For $S_3$, we manipulate the indicators to see that
$$
\1_{\bigl\{\bigl|\sum\limits_{k=1}^{i-1}\tilde{\Phi}(i,k)\bigr|\geq\frac{\eps\theta_n n}{2}\bigl\}}\1_{\bigl\{\bigl|\tilde{\Phi}(i,j)\bigr|<\frac{\eps\theta_n n}{4}\bigr\}} \le
\1_{\bigl\{\bigl|\sum\limits_{1 \le k \le i-1 \atop k\neq j}\tilde{\Phi}(i,k)\bigr|\geq\frac{\eps\theta_n n}{4}\bigr\}}
$$
and use \eqref{eq:tildeG} to obtain
\begin{align*}
S_3&\leq
\frac{4}{n^2\theta_n^2}\sum\limits_{1\le j< i\le n}
\E\Bigl[\E \Bigl[\tilde{\Phi}^2(i,j)\1_{\bigl\{\bigl|\sum\limits_{1 \le k \le i-1 \atop k\neq j}\tilde{\Phi}(i,k)\bigr|\geq\frac{\eps\theta_n n}{4}\bigr\}}\mid(X_k)_{\substack{k=1,\dots,i\\k\neq j\phantom{,\dots,i}}},(Z_{i,k})_{k=1,\dots,i-1}\Bigr]\Bigr]\\
&=\frac{4}{n^2\theta_n^2}\sum\limits_{1\le j< i\le n}\E\Bigl[\E\Bigl[\tilde{\Phi}^2(i,j)\mid X_i,Z_{i,j}\Bigr]\1_{\bigl\{\bigl|\sum\limits_{1 \le k \le i-1 \atop k\neq j}\tilde{\Phi}(i,k)\bigr|\geq\frac{\eps\theta_n n}{4}\bigr\}}
\Bigr]\\
&=\frac{4}{n^2\theta_n^2}\sum\limits_{1\le j<i}^{n}\E\Bigl[\tilde{G}_j(i,i)\1_{\bigl\{\bigl|\sum\limits_{1 \le k \le i-1 \atop k\neq j}\tilde{\Phi}(i,k)\bigr|\geq\frac{\eps\theta_n n}{4}\bigr\}}\Bigr]
\end{align*}
By adding another indicator, for any $\tilde{\eps}>0$,  this can be rewritten as
\begin{align*}
S_3&\le \frac{4}{n^2\theta_n^2}\sum\limits_{1\le j< i\le n}\E\Bigl[\tilde{G}_j(i,i)\1_{\bigl\{\bigl|\sum\limits_{1 \le k \le i-1 \atop k\neq j}\tilde{\Phi}(i,k)\bigr|\geq\frac{\eps\theta_n n}{4}\bigl\}}\1_{\bigl\{|\tilde{H}(i,i)|\geq\frac{\tilde{\eps}\theta_n^2 n}{16p}\bigr\}}\Bigr]\\
&\phantom{=E}+\frac{4}{n^2\theta_n^2}\sum\limits_{1\le j< i\le n}\E\Bigl[\tilde{G}_j(i,i)\1_{\bigl\{\bigl|\sum\limits_{1 \le k \le i-1 \atop k\neq j}\tilde{\Phi}(i,k)\bigr|\geq\frac{\eps\theta_n n}{4}\bigr\}}\1_{\bigl\{|\tilde{H}(i,i)|<\frac{\tilde{\eps}\theta_n^2 n}{16p}\bigr\}}\Bigr]
\end{align*}
\begin{align*}
&\le \frac{4}{n^2\theta_n^2}\sum\limits_{1\le j< i\le n}\E\Bigl[\tilde{G}_j(i,i)\1_{\left\{|\tilde{H}(i,i)|\geq\frac{\tilde{\eps}\theta_n^2 n}{16p}\right\}}\Bigr]\\
&\phantom{=E}+\frac{4}{n^2\theta_n^2}\sum\limits_{1\le j< i\le n}\E\Bigl[\tilde{G}_j(i,i)\1_{\bigl\{\bigr|\sum\limits_{1 \le k \le i-1 \atop k\neq j}\tilde{\Phi}(i,k)\bigr|\geq\frac{\eps\theta_n n}{4}\bigr\}}\1_{\bigl\{|\tilde{H}(i,i)|<\frac{\tilde{\eps}\theta_n^2 n}{16p}\bigr\}}\Bigr]\\
&\eqqcolon S_{31}+S_{32}.
\end{align*}
For $S_{31}$ we have by independence, \eqref{eq:tildeG} and \eqref{21}
\begin{align}\begin{split}\label{eq:s31}
S_{31}
&=\frac{4}{n^2\theta_n^2}\binom{n}{2}p\EW{\tilde{H}(1,1)\1_{\left\{|\tilde{H}(1,1)|\geq\frac{\tilde{\eps}\theta_n^2 n}{16p}\right\}}}\nconv 0.
\end{split}
\end{align}
On the other hand, by applying the two indicators
\begin{align*}
S_{32}
&\leq \frac{4}{n^2\theta_n^2}\sum\limits_{1\le j<i\le n }\E\Bigl[Z_{i,j}\frac{\tilde{\eps}\theta_n^2 n}{16p}\1_{\bigl\{\bigl|\sum\limits_{1 \le k \le i-1 \atop k\neq j}\tilde{\Phi}(i,k)\bigr|\geq\frac{\eps\theta_n n}{4}\bigr\}}\Bigr]
\\&\leq \frac{4\tilde{\eps}}{\eps^2n^3\theta_n^2}\sum\limits_{1\le j<i\le n }\E\Bigl[\frac{\eps^2\theta_n^2 n^2}{16}\1_{\bigl\{\bigl|\sum\limits_{1 \le k \le i-1 \atop k\neq j}\tilde{\Phi}(i,k)\bigr|\geq\frac{\eps\theta_n n}{4}\bigr\}}\Bigr]\\
&\leq \frac{4\tilde{\eps}}{\eps^2n^3\theta_n^2}\sum\limits_{1\le j<i\le n }\E\Bigl[\Bigl(\Bigl|\sum\limits_{1 \le k \le i-1 \atop k\neq j}\tilde{\Phi}(i,k)\Bigr|\Bigr)^2\1_{\bigl\{\bigl|\sum\limits_{1 \le k \le i-1 \atop k\neq j}\tilde{\Phi}(i,k)\bigr|\geq\frac{\eps\theta_n n}{4}\bigr\}}\Bigr]\\
&\leq \frac{4\tilde{\eps}}{\eps^2n^3\theta_n^2}\sum\limits_{1\le j<i\le n}\E\Bigl[\sum\limits_{1 \le k \le i-1 \atop k\neq j}\tilde{\Phi}^2(i,k)+\sum\limits_{1 \le k \le i-1 \atop k\neq j}\sum\limits_{1\le l \le i-1 \atop l\neq j,k}\tilde{\Phi}(i,k)\tilde{\Phi}(i,l)\Bigr]\\
&\leq \frac{4\tilde{\eps}}{\eps^2n^3\theta_n^2}n(n-1)\left((n-1)\EW{\tilde{\Phi}^2(1,2)}+(n-1)^2\EW{\tilde{\Phi}(1,2)\tilde{\Phi}(1,3)}\right)\\
\end{align*}
Applying Lemma \ref{lem:lem phi und h}, the first expectation is smaller than $\beta_n^2$.
By Lemma \ref{lem:phikleinindep}, the second expectation is 0.
Thus
$$ S_{32} 
\le \frac{4(n-1)^2\tilde{\eps}}{\eps^2n^2\theta_n^2}\beta_n^2 \le \frac{8\tilde{\eps}}{\eps^2}\frac{(n-1)^2}{n^2}
$$
by \eqref{defn}.  
As we may chose $\tilde{\eps}>0$ arbitrarily, 
this shows that $S_{32}\to 0$, and together with \eqref{eq:s31} we obtain that
$S_3\nconv 0.$

Finally, for $S_4$, we compute
\begin{align*}
S_4
&\leq\frac{4}{n^2\theta_n^2}\sum\limits_{i=1}^{n}\Bigl|\E\Bigl[\1_{\{|\tilde{H}(i,i)|\geq \frac{\tilde{\eps}\theta_n^2n}{2p}\}}\sum\limits_{1 \le j,k \le i-1 \atop j\neq k}^{i-1}\tilde{\Phi}(i,j)\tilde{\Phi}(i,k)\1_{\bigl\{\bigl|\sum\limits_{j=1}^{i-1}\tilde{\Phi}(i,j)\bigr|\geq\frac{\eps\theta_n n}{2}\bigr\}}\Bigr]\Bigr|\\
&\hspace{2cm}+\frac{4}{n^2\theta_n^2}\sum\limits_{i=1}^{n}\Bigl|\E\Bigl[\1_{\{|\tilde{H}(i,i)|< \frac{\tilde{\eps}\theta_n^2n}{2p}\}}\sum\limits_{1 \le j,k \le i-1 \atop j\neq k}\tilde{\Phi}(i,j)\tilde{\Phi}(i,k)\1_{\bigl\{\bigl|\sum\limits_{j=1}^{i-1}\tilde{\Phi}(i,j)\bigr|\geq\frac{\eps\theta_n n}{2}\bigr\}}\Bigr]\Bigr|\\
&\eqqcolon S_{41}+S_{42}
\end{align*}
Consider $S_{41}$ first. Because of $
\Bigl|\sum\limits_{j\neq k}x_jx_k\Bigr|
\leq \Bigl(\sum\limits_{j=1}^nx_j\Bigr)^2+\sum\limits_{j=1}^nx_j^2,$
we obtain
\begin{align*}
S_{41}
&\leq \frac{4}{n^2\theta_n^2}\sum\limits_{i=1}^{n}\E\Bigl[\1_{\{|\tilde{H}(i,i)|\geq \frac{\tilde{\eps}\theta_n^2n}{2p}\}}\Bigl(\sum\limits_{j=1}^{i-1}\tilde{\Phi}(i,j)\Bigr)^2\1_{\bigl\{\bigl|\sum\limits_{j=1}^{i-1}\tilde{\Phi}(i,j)\bigr|\geq\frac{\eps\theta_n n}{2}\bigr\}}\Bigr]\\
&\hspace{2cm}+ \frac{4}{n^2\theta_n^2}\sum\limits_{i=1}^{n}\E\Bigl[\1_{\{|\tilde{H}(i,i)|\geq \frac{\tilde{\eps}\theta_n^2n}{2p}\}}\sum\limits_{j=1}^{i-1}\tilde{\Phi}^2(i,j)\1_{\bigl\{\bigl|\sum\limits_{j=1}^{i-1}\tilde{\Phi}(i,j)\bigr|\geq\frac{\eps\theta_n n}{2}\bigr\}}\Bigr]\\
&\leq \frac{8}{n^2\theta_n^2}\sum\limits_{i=1}^{n}\E\Bigl[\1_{\{|\tilde{H}(i,i)|\geq \frac{\tilde{\eps}\theta_n^2n}{2p}\}}\sum\limits_{j=1}^{i-1}\tilde{\Phi}^2(i,j)\Bigr]
+\frac{4}{n^2\theta_n^2}\sum\limits_{i=1}^{n}\E\Bigl[\1_{\{|\tilde{H}(i,i)|\geq \frac{\tilde{\eps}\theta_n^2n}{2p}\}}\sum\limits_{1 \le j,k \le i-1  \atop j\neq k}\tilde{\Phi}(i,j)\tilde{\Phi}(i,k)\Bigr]\\
&\eqqcolon S_{41}'+S_{41}''
\end{align*}
For the first of these summands we obtain
\begin{align*}
S_{41}'
&=\frac{8}{n^2\theta_n^2}\sum\limits_{1\le j <i\le n}\E\Bigl[\1_{\{|\tilde{H}(i,i)|\geq \frac{\tilde{\eps}\theta_n^2n}{2p}\}}\tilde{\Phi}^2(i,j)\Bigr] 
=\frac{8}{n^2\theta_n^2}\sum\limits_{1\le j <i\le n}\E\Bigl[\1_{\{|\tilde{H}(i,i)|\geq \frac{\tilde{\eps}\theta_n^2n}{2p}\}}\E\Bigl[\tilde{\Phi}^2(i,j)\mid X_i, Z_{ij}\Bigr]\Bigr]\\
&=\frac{8}{n^2\theta_n^2}\sum\limits_{1\le j <i\le n}\E\Bigl[\1_{\{|\tilde{H}(i,i)|\geq \frac{\tilde{\eps}\theta_n^2n}{2p}\}}\tilde{G}_j(i,i)\Bigr]
=\frac{8}{n^2\theta_n^2}\binom{n}{2}\E\Bigl[\1_{\{|\tilde{H}(1,1)|\geq \frac{\tilde{\eps}\theta_n^2n}{2p}\}}\tilde{H}(1,1)Z_{1,2}\Bigr]\\
&
\leq\frac{4}{\theta_n^2}p\E\Big[\1_{\{|\tilde{H}(1,1)|\geq \frac{\tilde{\eps}\theta_n^2n}{2p}\}}\tilde{H}(1,1)\Bigr]
\end{align*}
which leads to $S_{41}'\nconv 0$
due to \eqref{21}.

For the second summand $S_{41}''$ we estimate
\begin{align*}
S_{41}''
&=\frac{4}{n^2\theta_n^2}\sum\limits_{i=1}^{n}\sum\limits_{1 \le j,k \le i-1 \atop j\neq k}\E\Bigl[\E\Bigl[\1_{\{|\tilde{H}(i,i)|\geq \frac{\tilde{\eps}\theta_n^2n}{2p}\}}\tilde{\Phi}(i,j)\tilde{\Phi}(i,k)\mid X_i,X_j,Z_{i,j},Z_{i,k}\Bigr]\Bigr]\\
&=\frac{4}{n^2\theta_n^2}\sum\limits_{i=1}^{n}\sum\limits_{1 \le j,k \le i-1 \atop j\neq k}\E\Bigl[\1_{\{|\tilde{H}(i,i)|\geq \frac{\tilde{\eps}\theta_n^2n}{2p}\}}\tilde{\Phi}(i,j)Z_{i,k}\E\bigl[\tilde{h}(i,k)\mid X_i,X_j,Z_{i,j},Z_{i,k}\bigr]\Bigr]\\
&=\frac{4}{n^2\theta_n^2}\sum\limits_{i=1}^{n}\sum\limits_{1 \le j,k \le i-1 \atop j\neq k}\E\Bigl[\1_{\{|\tilde{H}(i,i)|\geq \frac{\tilde{\eps}\theta_n^2n}{2p}\}}\tilde{\Phi}(i,j)Z_{i,k}\E\bigl[\tilde{h}(i,k)\mid X_i\bigr]\Bigr]=0,
\end{align*}
which follows from \eqref{eq:htildecentering}. Altogether this gives
$S_{41}\nconv 0.$

Considering now $S_{42}$ we see that (using Cauchy-Schwarz for the second inequality)
\begin{align}
S_{42}
&\leq\frac{4}{n^2\theta_n^2}\sum\limits_{i=1}^{n}\E\Bigl[\Bigl|\1_{\{|\tilde{H}(i,i)|< \frac{\tilde{\eps}\theta_n^2n}{2p}\}}\sum\limits_{1 \le j,k \le i-1 \atop j\neq k}\tilde{\Phi}(i,j)\tilde{\Phi}(i,k)\1_{\bigl\{\bigl|\sum\limits_{j=1}^{i-1}\tilde{\Phi}(i,j)\bigr|\geq\frac{\eps\theta_n n}{2}\bigr\}}\Bigr|\Bigr]\notag\\
&\leq \frac{4}{n^2\theta_n^2}\sum\limits_{i=1}^{n}\E\Bigl[\1_{\{|\tilde{H}(i,i)|<\frac{\tilde{\eps}\theta_n^2n}{2p}\}}\bigl(\sum\limits_{j,k=1}^{i-1}\1_{\left\{j\neq k\right\}}\tilde{\Phi}(i,j)\tilde{\Phi}(i,k)\bigr)^2\Bigr]^{\frac{1}{2}}\P\Bigl(
\bigl|\sum\limits_{l=1}^{i-1}\tilde{\Phi}(i,l)\bigr|\geq\frac{\eps\theta_nn}{2}\Bigr)^{\frac{1}{2}}\notag\\
&=:\frac{4}{n^2\theta_n^2}\sum\limits_{i=1}^{n}A_iB_i.\label{28}
\end{align}
We estimate $A_i$ by
\begin{align*}
A_i^2
&=\sum_{j,k=1}^{i-1}\1_{\{j\neq k\}}\E\bigl[\1_{\{|\tilde{H}(i,i)|<\frac{\tilde{\eps}\theta_n^2n}{2p}\}}\tilde{\Phi}^2(i,j)\tilde{\Phi}^2(i,k)\bigr]\\
&\hspace{1cm}+\underset{\left\{l,m\right\}\neq \left\{j,k\right\}}{\sum\limits_{j,k=1}^{i-1}\sum\limits_{l,m=1}^{i-1}}\1_{\{j\neq k\}}\1_{\{l\neq m\}}\E\bigl[\1_{\{|\tilde{H}(i,i)|<\frac{\tilde{\eps}\theta_n^2n}{2p}\}}\tilde{\Phi}(i,j)\tilde{\Phi}(i,k)\tilde{\Phi}(i,l)\tilde{\Phi}(i,m)\Bigr]\\
&\eqqcolon A_{i1}+A_{i2}
\end{align*}
Using the properties of (conditional) expectation for $A_{i1}$ we see that
\begin{align*}
\E\bigl[\1_{\{|\tilde{H}(i,i)|<\frac{\tilde{\eps}\theta_n^2n}{2p}\}}\tilde{\Phi}^2(i,j)\tilde{\Phi}^2(i,k)\bigr]
\hspace{-5cm}&\\
&=\E\Bigl[\E\Bigl[\E\Bigl[\1_{\{|\tilde{H}(i,i)|<\frac{\tilde{\eps}\theta_n^2n}{2p}\}}\tilde{\Phi}^2(i,j)\tilde{\Phi}^2(i,k)\mid X_i,X_j,Z_{i,j}, Z_{i,k}\Bigr]\mid X_i,Z_{i,j}, Z_{i,k}\Bigr]\Bigr]\\
&=\E\Bigl[\1_{\{|\tilde{H}(i,i)|<\frac{\tilde{\eps}\theta_n^2n}{2p}\}}\E\Bigl[\tilde{\Phi}^2(i,j)\mid X_i,Z_{i,j}\Bigr]\E\Bigl[\tilde{\Phi}^2(i,k)\mid X_i, Z_{i,k}\Bigr]\Bigr]\\
\intertext{and by \eqref{eq:tildeG}}
&=\E\bigl[\1_{\{|\tilde{H}(i,i)|<\frac{\tilde{\eps}\theta_n^2n}{2p}\}}\tilde{G}_j(i,i)\tilde{G}_k(i,i)\Bigr]
=\E\bigl[Z_{i,j}\bigr]\E\bigl[Z_{i,k}\bigr]\E\bigl[\1_{\{|\tilde{H}(i,i)|<\frac{\tilde{\eps}\theta_n^2n}{2p}\}}\tilde{H}(i,i)\tilde{H}(i,i)\bigr]
\\
&<p^2\cdot \frac{\tilde{\eps}\theta_n^2n}{2p}\cdot\EW{\tilde{H}(i,i)}
\leq p\cdot \frac{\tilde{\eps}\theta_n^2n}{2}\cdot\frac{\beta_n^2}{p}
=\frac{\tilde{\eps}\theta_n^2n}{2}\beta_n^2
\end{align*}
where in the last inequality we applied Corollary \ref{cor:expectG}.

On the other hand, for $A_{i2}$, we know that at least one of the values $j,k,l,m$ is different from the others. Without loss of generality, this is $m$.
Then,
\begin{align*}
\E\bigl[\1_{\{|\tilde{H}(i,i)|<\frac{\tilde{\eps}\theta_n^2n}{2p}\}}\tilde{\Phi}(i,j)\tilde{\Phi}(i,k)\tilde{\Phi}(i,l)\tilde{\Phi}(i,m)\bigr]\hspace{-8cm}&\hspace{8cm}\\
&=\E\bigl[\E\bigl[\1_{\{|\tilde{H}(i,i)|<\frac{\tilde{\eps}\theta_n^2n}{2p}\}}\tilde{\Phi}(i,j)\tilde{\Phi}(i,k)\tilde{\Phi}(i,l)\tilde{\Phi}(i,m)\mid Z_{i,j}, Z_{i,k}, Z_{i,l}, Z_{i,m}, X_i, X_j, X_k, X_l\bigr]\bigr]\\
&=\E\bigl[\1_{\{|\tilde{H}(i,i)|<\frac{\tilde{\eps}\theta_n^2n}{2p}\}}\tilde{\Phi}(i,j)\tilde{\Phi}(i,k)\tilde{\Phi}(i,l)Z_{i,m}\EW{\tilde{h}(i,m)\mid X_i}\Bigr]=0,
\end{align*}
due to \eqref{eq:htildecentering}.
Altogether,
$A_i^2=A_{i1}\leq n^2\cdot \frac{\tilde{\eps}\theta_n^2n}{2}\cdot\beta_n^2\leq n^2\cdot \tilde{\eps}n\theta_n^2\cdot\beta_n^2,$ hence
\begin{equation}
A_i\leq n\beta_n\theta_n\sqrt{\tilde{\eps}n},
\label{eq:aieq}
\end{equation}
To give a bound for $B_i$, we use the fact that
\[\E\Bigl[\bigl(\sum\limits_{l=1}^{i-1}\tilde{\Phi}(i,l)\bigr)^2\Bigl]=\E\Bigl[\sum\limits_{l=1}^{i-1}\tilde{\Phi}^2(i,l)\Bigr]+\E\Bigl[\sum\limits_{l,m=1,l\neq m}^{i-1}\tilde{\Phi}(i,l)\tilde{\Phi}(i,m)\Bigr]=\sum\limits_{l=1}^{i-1}\E\bigl[\tilde{\Phi}^2(i,l)\bigr]\leq n\beta_n^2,\]
by Lemma \ref{lem:phikleinindep} and Lemma \ref{lem:lem phi und h}. By Markov's inequality
\begin{equation}
B_i=\P\Big(\bigl|\sum\limits_{l=1}^{i-1}\tilde{\Phi}(i,l)\bigr|\geq\frac{\eps\theta_nn}{2}\Bigr)^{1/2}\leq\Bigl(\frac{4}{\eps^2\theta_n^2n^2}\E\bigr[\bigl(\sum\limits_{l=1}^{i-1}\tilde{\Phi}(i,l)\bigr)^2\bigr]\Bigr)^{1/2}\leq \frac{2\beta_n}{\eps\theta_n\sqrt{n}}.
\label{eq:bieq}
\end{equation}
Then \eqref{28}, \eqref{eq:aieq} and \eqref{eq:bieq} give
\begin{align}
S_{42}&\leq \frac{4}{n^2\theta_n^2}\sum\limits_{i=1}^{n}A_iB_i\notag\leq \frac{4}{n^2\theta_n^2}\sum\limits_{i=1}^{n}(n\beta_n\theta_n \sqrt{\tilde{\eps}n}\cdot\frac{2\beta_n}{\eps\theta_n\sqrt{n}})
\leq \frac{4}{n^2\theta_n^2}n\left(n\frac{2\sqrt{\tilde{\eps}}}{\eps}\beta_n^2\right)\leq \frac{16\sqrt{\tilde{\eps}}}{\eps}\notag
\end{align}
As $\tilde{\eps}$ was chosen arbitrarily, we obtain,
$S_{42}\nconv 0$, and hence $S_4\nconv 0.$

Wrapping things up, this tells us that
\[\EW{\eta_1}\leq S_1+T_1\leq S_1+S_2+S_3+S_4 \nconv 0\]
and thus, \eqref{16} holds.
\end{proof}

\begin{lem}
\eqref{17} follows from \eqref{6}, \eqref{20}, \eqref{21}, \eqref{23} and \eqref{22}.
\end{lem}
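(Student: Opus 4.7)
The plan is to prove $\eta_2 \Pconv 1$ by establishing $\E[\eta_2]\to 1$ and $\V(\eta_2)\to 0$, then applying Chebyshev's inequality. The mean convergence is immediate from Lemma \ref{lem:variance}: since the $\xi_i$ are martingale differences (cf. Section~2), they are uncorrelated, so $\E[\eta_2] = \sum_i \E[\xi_i^2] = \V(\sum_i \xi_i) \sim 1$. For the variance, expand $\xi_i^2 = (\xi_i^{(1)})^2 + 2\xi_i^{(1)}\xi_i^{(2)} + (\xi_i^{(2)})^2$, yielding a decomposition $\eta_2 = A_n + 2B_n + C_n$ with $A_n = \sum_i\E[(\xi_i^{(1)})^2\mid\mathcal{F}_{i-1}]$, $B_n = \sum_i\E[\xi_i^{(1)}\xi_i^{(2)}\mid\mathcal{F}_{i-1}]$, and $C_n = \sum_i\E[(\xi_i^{(2)})^2\mid\mathcal{F}_{i-1}]$; it suffices to bound $\V(A_n)$, $\V(B_n)$, $\V(C_n)$ separately.

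For $A_n$, writing $\Psi_j(i) = Z_{ij}\psi(X_i)$ with $\psi(x):=\E[h_n(X_{n1},X_{n2})\mid X_{n1}=x]$ gives $A_n = \frac{\gamma_n^2/p}{n^2\theta_n^2}\sum_i\E[N_i^2\mid\mathcal{F}_{i-1}]$, where $N_i = \sum_{j\neq i}Z_{ij}$. Standard second-moment estimates on $\sum_i N_i^2$, relying only on independence of the $Z_{ij}$ and on $np\to\infty$, yield $\V(A_n) = O(1/np) \to 0$. For $B_n$, after conditioning, the summand reduces to an expression linear in $\phi(X_l):=\E[\psi(X_i)\tilde h(X_i,X_l)\mid X_l]$; since $\E[\psi]=0$ and \eqref{eq:htildecentering} forces $\E[\phi]=0$, an $L^2$ estimate via Cauchy--Schwarz and Lemma \ref{lem:lem phi und h} shows $\V(B_n)\to 0$.

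The delicate piece is $C_n$, which I split as $C_n = C_n^d + C_n^o$, with the diagonal part $C_n^d = (n^2\theta_n^2)^{-1}\sum_{j<i}Z_{ij}\tilde H(j,j)$ arising from $\E[\tilde\Phi^2(i,j)\mid\mathcal{F}_{i-1}]$ and the off-diagonal part $C_n^o = (n^2\theta_n^2)^{-1}\sum_i\sum_{j\neq k<i}Z_{ij}Z_{ik}\tilde H(j,k)$ arising from $\E[\tilde\Phi(i,j)\tilde\Phi(i,k)\mid\mathcal{F}_{i-1}] = \tilde G_i(j,k)$. By \eqref{eq:htildecentering} one has $\E[\tilde H(j,k)]=0$ for $j\neq k$, so $\E[C_n^o]=0$; a pairing argument shows that only matching index triples contribute to $\E[(C_n^o)^2]$, yielding $\E[(C_n^o)^2] = O(p^2\E[\tilde H^2(1,2)]/(n\theta_n^4))$, which vanishes precisely by condition \eqref{22}. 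For $C_n^d$ I apply a Lindeberg-type truncation of $\tilde H(j,j)$ at level $\tilde\eps\theta_n^2 n/p$: the contribution beyond the threshold has $L^1$-norm controlled by \eqref{21}, while on the complementary bounded event a direct covariance computation (using $\E[\tilde H(1,1)] = (\beta_n^2-2\gamma_n^2)/p$ and $\beta_n^2 \leq 2\theta_n^2$) shows that $\V(C_n^d)$ can be made arbitrarily small by choosing $\tilde\eps$ small and then $n$ large.

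The main obstacle is the off-diagonal term $C_n^o$: without \eqref{22} (equivalently \eqref{23}, via Proposition \ref{lem:23to24}), the second moment $\E[\tilde H^2(j,k)]$ for $j\neq k$ is unrestricted and the symmetric bilinear form $\sum_{j\neq k}Z_{ij}Z_{ik}\tilde H(j,k)$ fails to concentrate around its vanishing mean. The truncation argument for $C_n^d$ is combinatorially parallel to the treatment of $S_3$ in the preceding lemma.
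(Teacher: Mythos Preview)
Your overall strategy—show $\E[\eta_2]\to 1$ and $\V(\eta_2)\to 0$, with the decomposition $\eta_2=A_n+2B_n+C_n^d+C_n^o$—is essentially the paper's (these are the paper's $\eta_{21},\eta_{24},\eta_{22},\eta_{23}$). Your handling of $A_n$, $C_n^d$ (via truncation and \eqref{21}) and $C_n^o$ (via \eqref{22}) is correct and parallels the paper; the paper groups $A_n$ with the truncated part of $C_n^d$ and computes a joint second moment, whereas you treat them separately, but either bookkeeping works.

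There is, however, a real gap in your treatment of the cross term $B_n$. The claim that ``an $L^2$ estimate via Cauchy--Schwarz and Lemma~\ref{lem:lem phi und h}'' yields $\V(B_n)\to 0$ is not justified: the direct Cauchy--Schwarz on $\phi$ gives only $\E[\phi^2]\le \gamma_n^2\beta_n^2/p^2$, and feeding this into the second moment of $B_n$ produces a bound of order $np^2\gamma_n^2\beta_n^2/\theta_n^4$, which in the balanced regime $np\gamma_n^2\sim\beta_n^2/2$ is of order $p$. For $p$ bounded away from~$0$ this is $O(1)$, not $o(1)$. The paper (Lemma~\ref{lem:eta24}) instead rewrites $\E[\phi^2(X_m)]=\E\bigl[\psi(X_1)\psi(X_2)\tilde H(1,2)\bigr]$ and bounds it by $(\gamma_n^2/p)\,\E[\tilde H^2(1,2)]^{1/2}$; it is then precisely \eqref{22} that forces $\V(B_n)\to 0$. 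So \eqref{22} is needed for $B_n$ as well as for $C_n^o$, contrary to what your last paragraph suggests.

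A minor point: your bound $\E[(C_n^o)^2]=O\bigl(p^2\E[\tilde H^2(1,2)]/(n\theta_n^4)\bigr)$ covers only the diagonal $i=i'$; the off-diagonal $i\neq i'$ contributes a further term of order $p^4\E[\tilde H^2(1,2)]/\theta_n^4$. The correct bound is $O\bigl(\E[\tilde G_1^2(2,3)]/\theta_n^4\bigr)$, which still vanishes by \eqref{22}.
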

\begin{proof}
We start by calculating $\xi_i^2$:
\begin{align}
\xi_i^2&=\left(\xi_i^{(1)}+\xi_i^{(2)}\right)^2=\bigl(
\frac{1}{n\theta_n}\sum_{\substack{j=1\\j\neq i}}^{n}\Psi_j(i)+\frac{1}{n\theta_n}\sum\limits_{j=1}^{i-1}\tilde{\Phi}(i,j)\bigr)^2
\notag\\
&=\frac{1}{n^2\theta_n^2}\sum_{\substack{j=1\\j\neq i}}^{n}\Psi_j^2(i)+\frac{1}{n^2\theta_n^2}\sum_{\substack{j,k=1\\j,k\neq i, j\neq k}}^{n}\Psi_j(i)\Psi_k(i)+2\frac{1}{n^2\theta_n^2}\sum_{\substack{j=1\\j\neq i}}^{n}\sum\limits_{k=1}^{i-1}\Psi_j(i)\tilde{\Phi}(i,k)\notag\\
&\phantom{=E}+\sum\limits_{j=1}^{i-1}\tilde{\Phi}^2(i,j)+\frac{1}{n^2\theta_n^2}\sum\limits_{\substack{j,k=1\\j\neq k}}^{i-1}\tilde{\Phi}(i,j)\tilde{\Phi}(i,k)\label{many_summands}
\end{align}
We will compute the sum (in $i$) of the conditional expectations for of each of these summands.
Let us start with the second one:
First observe that for any choice of $i \ne j,k$ we have $\Psi_k(i)=\Psi_j(i)Z_{ik}/Z_{ij}$. Hence, using that $Z_{ij}^2=Z_{ij}$ we get
\begin{align*}
&\sum\limits_{i=1}^{n}\frac{1}{n^2\theta_n^2}\sum_{\substack{j,k=1\\j,k\neq i, j\neq k}}^{n}\EW{\Psi_j(i)\Psi_k(i)\mid(X_k)_{k=1,\dots,i-1},(Z_{l,m})_{\substack{l=1,\dots,i-1,\phantom{m\neq l}\\m=1,\dots,n,\,m\neq l}}}\\
&=\sum\limits_{i=1}^{n}\frac{1}{n^2\theta_n^2}\sum_{\substack{j,k=1\\j,k\neq i, j\neq k}}^{n}\EW{Z_{i,k}\Psi_j^2(i)\mid(X_k)_{k=1,\dots,i-1},(Z_{l,m})_{\substack{l=1,\dots,i-1,\phantom{m\neq l}\\m=1,\dots,n,\,m\neq l}}}\\
&=\sum\limits_{i=1}^{n}\frac{1}{n^2\theta_n^2}\sum_{\substack{j=1\\j\neq i}}^{n}\Bigl(\sum\limits_{\substack{k=1\\k\neq j}}^{i-1}Z_{i,k}+\sum\limits_{\substack{k=i+1\\k\neq i,j}}^{n}p\Bigr)\EW{\Psi_j^2(i)\mid(X_k)_{k=1,\dots,i-1},(Z_{l,m})_{\substack{l=1,\dots,i-1,\phantom{m\neq l}\\m=1,\dots,n,\,m\neq l}}}
\end{align*}
where we applied measurability of $Z_{i,k}$ for $k<i$ with respect to the condition and  independence of the condition for $Z_{i,k}$, $k>i$.

The conditional expectation of first two summands in \eqref{many_summands} together then is
\begin{align*}\eta_{21}&\coloneqq\sum\limits_{i=1}^{n}\frac{1}{n^2\theta_n^2}\sum_{\substack{j=1\\j\neq i}}^{n}\E\bigl[\Psi_j^2(i)\mid(X_k)_{k=1,\dots,i-1},(Z_{l,m})_{\substack{l=1,\dots,i-1,\phantom{nnl}\\m=1,\dots,n,\,m\neq l}}\bigr]\cdot\bigl(\sum\limits_{\substack{k=1\\k\neq j}}^{i-1}Z_{i,k}+\sum\limits_{\substack{k=i+1\\k\neq i,j}}^{n}p+1\bigr)\\
&=\sum\limits_{i=1}^{n}\frac{1}{n^2\theta_n^2}\sum_{\substack{j=1\\j\neq i}}^{n}\E\Bigl[Z_{ij}^2\E\bigl[h(i,j)\mid X_i\bigr]^2\mid(Z_{l,m})_{\substack{l=1,\dots,i-1,\phantom{nnl}\\m=1,\dots,n,\,m\neq l}}\Bigr]\cdot\bigl(\sum\limits_{\substack{k=1\\k\neq j}}^{i-1}Z_{i,k}+\sum\limits_{\substack{k=i+1\\k\neq i,j}}^{n}p+1\bigr)\\
&=\sum\limits_{i=1}^{n}\frac{1}{n^2\theta_n^2}\sum_{\substack{j=1\\j\neq i}}^{n}\E\Bigl[Z_{ij}^2\E\bigl[\E\bigl[h(i,j)\mid X_i\bigr]^2\bigr]\mid(Z_{l,m})_{\substack{l=1,\dots,i-1,\phantom{nnl}\\m=1,\dots,n,\,m\neq l}}\Bigr]\cdot\bigl(\sum\limits_{\substack{k=1\\k\neq j}}^{i-1}Z_{i,k}+\sum\limits_{\substack{k=i+1\\k\neq i,j}}^{n}p+1\bigr)\\
&=\frac{\gamma_n^2}{n^2p\theta_n^2}\sum\limits_{i=1}^{n}\sum_{\substack{j=1\\j\neq i}}^{n}\E\Bigl[Z_{i,j}\cdot\bigl(\sum\limits_{\substack{k=1\\k\neq j}}^{i-1}Z_{i,k}+\sum\limits_{\substack{k=i+1\\k\neq i,j}}^{n}p+1\bigr)\mid(Z_{l,i})_{l=1,\dots,i-1}\Bigr].
\end{align*}
since $\EW{\EW{h(i,j)\mid X_i}^2}$ is given by Lemma \ref{lem:lem phi und h}, and $Z_{i,j}^2=Z_{i,j}$.
It is easily seen that for this term, the relation
\begin{align}
\EW{\eta_{21}}
&=\frac{\gamma_n^2}{n^2p\theta_n^2}n(n-1)\left((n-2)p+1\right)p\sim \frac{np\gamma_n^2}{\theta_n^2}
\label{eq:expeta21}
\end{align}
holds.
Next, the conditional expectation of the fourth summand in \eqref{many_summands} can be computed as:
\begin{align*}\eta_{22}&:=\frac{1}{n^2\theta_n^2}\sum\limits_{i=1}^{n}\sum_{j=1}^{i-1}\E\Bigl[\tilde{\Phi}^2(i,j)\mid(X_k)_{k=1,\dots,i-1},(Z_{l,m})_{\substack{l=1,\dots,i-1,\phantom{nnl}\\m=1,\dots,n,\,m\neq l}}\Bigr]\\
&=\frac{1}{n^2\theta_n^2}\sum\limits_{i=1}^{n}\sum_{j=1}^{i-1}\EW{\tilde{\Phi}^2(i,j)\mid X_j,Z_{i,j}}=\frac{1}{n^2\theta_n^2}\sum\limits_{i=1}^{n}\sum_{j=1}^{i-1}\tilde{G}_i(j,j)
\end{align*}
by \eqref{eq:tildeG}. By Corollary \ref{cor:expectG} the expectation for this term satisfies
\begin{equation}
\EW{\eta_{22}}=\binom{n}{2}\frac{1}{n^2\theta_n^2}(\beta_n^2-2\gamma_n^2)\sim\frac{\beta_n^2-2\gamma_n^2}{2\theta_n^2}
\label{eq:expeta22}
\end{equation}
Furthermore, the sum of the conditional expectations of fifth summand in \eqref{many_summands} by \eqref{eq:tildeG} is
$ \eta_{23}
:=\frac{1}{n^2\theta_n^2}\sum\limits_{i=1}^{n}\sum\limits_{\substack{j,k=1\\j\neq k}}^{i-1}\tilde{G}_i(j,k).
$
For the third summand in \eqref{many_summands} we obtain:
\begin{align*}
\eta_{24}
&:=\frac{2}{n^2\theta_n^2}\sum\limits_{i=1}^{n}\sum_{\substack{j=1\\j\neq i}}^{n}\sum\limits_{m=1}^{i-1}\E[\Psi_j(i)\tilde{\Phi}(i,m)\mid X_m,(Z_{l,i})_{l=1,\dots,i-1,}]
\end{align*}
Then
$\eta_2\sim\eta_{21}+\eta_{22}+\eta_{23}+\eta_{24}.$ 
One can immediately conclude from \eqref{eq:expeta21} and \eqref{eq:expeta22} 
that
$\EW{\eta_{21}+\eta_{22}}\sim \frac{np\gamma_n^2}{\theta_n^2}+\frac{\beta_n^2-2\gamma_n^2}{2\theta_n^2}\sim\frac{\frac{1}{2}\beta_n^2+np\gamma_n^2}{\theta_n^2}$
and by definition of $\theta_n^2$ (cf. \eqref{defn})
\begin{equation}
\EW{\eta_{21}+\eta_{22}}\nconv 1.
\label{eq:conveta21-1}
\end{equation}
Let us split up $\eta_{22}$, by choosing $\tilde{\eps}>0$ arbitrarily:
\begin{align*}\eta_{22}
&=\frac{1}{n^2\theta_n^2}\sum\limits_{i=1}^{n}\sum\limits_{j=1}^{i-1}\tilde{G}_i(j,j)\1_{\bigl\{|\tilde{H}(j,j)|<\frac{\tilde{\eps}\theta_n^2n}{p}\bigr\}}+\frac{1}{n^2\theta_n^2}\sum\limits_{i=1}^{n}\sum\limits_{j=1}^{i-1}\tilde{G}_i(j,j)\1_{\bigl\{|\tilde{H}(j,j)|\geq\frac{\tilde{\eps}\theta_n^2n}{p}\bigr\}}\\
&=:\eta_{22}'+\eta_{22}''
\end{align*}
Consider the second summand first. By definition
\begin{align*}
\EW{\eta_{22}''}
&=\frac{1}{n^2\theta_n^2}\sum\limits_{i=1}^{n}\sum\limits_{j=1}^{i-1}\E\bigl[Z_{i,j}\tilde{H}(j,j)\1_{\bigl\{|\tilde{H}(j,j)|\geq\frac{\tilde{\eps}\theta_n^2n}{p}\bigr\}}\bigr]
=\frac{1}{n^2}\binom{n}{2}\frac{p}{\theta_n^2}\E\bigl[\tilde{H}(1,1)\1_{\bigl\{|\tilde{H}(1,1)|\geq\frac{\tilde{\eps}\theta_n^2n}{p}\bigr\}}\bigr].
\end{align*}
Then \eqref{21} yields for any choice of $\tilde{\eps}$ that $\EW{\eta_{22}''}\nconv 0$ and hence
$\eta_{22}''\Pconv 0$ and by \eqref{eq:conveta21-1}
\begin{equation}
\EW{\eta_{21}+\eta_{22}'}\nconv 1.
\label{eq:eta21+eta22''conv}
\end{equation}

To compute $\E[(\eta_{21}+\eta_{22}')^2]$ we define  $\Lambda_j(i)\coloneqq\sum\limits_{\substack{k=1\\k\neq j}}^{i-1}Z_{i,k}+\sum\limits_{\substack{k=i+1\\k\neq i,j}}^{n}p+1$ and compute
\begin{align*}
	\E\bigl[\eta_{21}^2\bigr]
=\frac{\gamma_n^4}{n^4p^2\theta_n^4}\sum\limits_{i,i'=1}^{n}\sum_{\substack{j=1\\j\neq i}}^{n}\sum_{\substack{j'=1\\j'\neq i'}}^{n}\mathbb{E}\Bigl[\E\bigl[\Lambda_j(i)Z_{i,j}\mid(Z_{l,i})_{l=1,\dots,i-1}\bigr]\cdot\E\bigl[\Lambda_{j'}(i')Z_{i',j'}\mid(Z_{l,i'})_{l=1,\dots,i'-1}\bigr]\Bigr]
	\end{align*}
 If $i=i', j=j'$, by Jensen's inequality we find
\begin{multline*}
\mathbb{E}\Bigl[\E\bigl[\Lambda_j(i)Z_{i,j}\mid(Z_{l,i})_{l=1,\dots,i-1}\bigr]
\cdot\E\bigl[\Lambda_{j'}(i')Z_{i',j'}\mid(Z_{l,i'})_{l=1,\dots,i'-1}\bigr]\Bigr]\\
\leq\EW{Z_{ij}^2\Lambda_j^2(i)}=\EW{Z_{ij}Z_{i'j'}\Lambda_j(i)\Lambda_{j'}(i')}
\end{multline*}	
If $i=i'$, $j\neq j'$, $\Lambda_j(i)$ and $\Lambda_{j'}(i)$ are $(Z_{l,i})_{l=1,\dots,i-1}$-measurable. Then, dragging the second conditional into the first one and additionally conditioning on $Z_{i,j}$ in it (which is possible due to $j\neq j'$),
\begin{align*}
\mathbb{E}&\Bigl[\E\bigl[\Lambda_j(i)Z_{i,j}\mid(Z_{l,i})_{l=1,\dots,i-1}\bigr]
\cdot\E\bigl[\Lambda_{j'}(i')Z_{i',j'}\mid(Z_{l,i'})_{l=1,\dots,i'-1}\bigr]\Bigr]\\
&=\mathbb{E}\Bigl[\Lambda_j(i)\Lambda_{j'}(i)\cdot \E\bigl[Z_{i,j}\mid(Z_{l,i})_{l=1,\dots,i-1}\bigr]
\E\bigl[Z_{i,j'}\mid(Z_{l,i})_{l=1,\dots,i-1}\bigr]\Bigr]\\
&=\mathbb{E}\Bigl[\Lambda_j(i)\Lambda_{j'}(i)\cdot\E\bigl[\E\bigl[Z_{i,j}Z_{i,j'}\mid(Z_{l,i})_{l=1,\dots,i-1,j}\bigr]\mid(Z_{l,i})_{l=1,\dots,i-1}\bigr]\Bigr]\\
&=\mathbb{E}\Bigl[Z_{i,j}Z_{i,j'}\cdot \Lambda_j(i)\Lambda_{j'}(i)\Bigr],
\end{align*}
where the last equality is due to the above measurability again and then applying law of total expectation.

Finally, if $i\neq i'$, by independence and law of total expectation we have
\begin{align*}
\mathbb{E}&\Bigl[\E\bigl[\Lambda_j(i)Z_{i,j}\mid(Z_{l,i})_{l=1,\dots,i-1}\bigr]
\cdot\E\bigl[\Lambda_{j'}(i')Z_{i',j'}\mid(Z_{l,i'})_{l=1,\dots,i'-1}\bigr]\Bigr]\\
&=\mathbb{E}\Bigl[\Lambda_j(i)Z_{i,j}\Bigr]\cdot\mathbb{E}\Bigl[\Lambda_{j'}(i')Z_{i',j'}\Bigr]=\mathbb{E}\Bigl[Z_{i,j}Z_{i,j'}\cdot \Lambda_j(i)\Lambda_{j'}(i)\Bigr].
\end{align*}
Thus
\[\EW{\eta_{21}^2}\leq\frac{\gamma_n^4}{n^4p^2\theta_n^4}\sum\limits_{i=1}^{n}\sum\limits_{i'=1}^{n}\sum_{j=1\atop j\neq i}^{n}\sum_{j'=1\atop j'\neq i'}^{n}\mathbb{E}\left[Z_{i,j}Z_{i',j'}\Lambda_j(i)\Lambda_{j'}(i')\right]\]

	By Lemma \ref{lem:mischtermkunterbunt2} this immediately leads to
		$\EW{\eta_{21}^2}\lesssim\frac{(np)^2\gamma_n^4}{\theta_n^4}.$
Moreover,
	\begin{align*}
	\EW{\eta_{21}\eta_{22}'}&=\mathbb{E}\Bigl[\frac{\gamma_n^2}{n^2p\theta_n^2}\sum\limits_{i=1}^{n}\sum_{\substack{j=1\\j\neq i}}^{n}\E\bigl[Z_{i,j}\Lambda_j(i)\mid(Z_{l,i})_{l=1,\dots,i-1}\bigr]
\frac{1}{n^2\theta_n^2}\sum\limits_{i'=1}^{n}\sum\limits_{j'=1}^{i'-1}\tilde{G}_{i'}(j',j')\1_{\left\{|\tilde{H}(j',j')|<\frac{\tilde{\eps}\theta_n^2n}{p}\right\}}\Bigr]\\
	&\leq\mathbb{E}\Bigl[\frac{\gamma_n^2}{n^2p\theta_n^2}\sum\limits_{i=1}^{n}\sum_{\substack{j=1\\j\neq i}}^{n}\E\bigl[Z_{i,j}\Lambda_j(i)\mid(Z_{l,i})_{l=1,\dots,i-1}\bigr]\cdot\frac{1}{n^2\theta_n^2}\sum\limits_{i'=1}^{n}\sum\limits_{j'=1}^{i'-1}\tilde{G}_{i'}(j',j')\Bigr]
	\end{align*}
Similarly to the previous step we get by \eqref{eq:tildeG} 
\begin{align*}
\EW{\eta_{21}\eta_{22}'} 
\leq\frac{\gamma_n^2}{n^4p\theta_n^4}\sum\limits_{i,i'=1}^{n}\sum_{\substack{j=1\\j\neq i}}^{n}\sum\limits_{j'=1}^{i'-1}\mathbb{E}\left[\Lambda_j(i)Z_{i,j}Z_{i',j'}\tilde{h}^2(i',j')\right]
	\end{align*}
Now the $\tilde{h}$ term only depends on the $X_{i'}$, such that is independent of $\Lambda_j(i)Z_{i,j}Z_{i',j'}$. By Lemma \ref{lem:lem phi und h}, $\EW{\tilde{h}^2(i,j)}=\frac{\beta_n^2-2\gamma_n^2}{p}\leq \frac{\beta_n^2}{p}$ such that
\begin{align*}
\EW{\eta_{21}\eta_{22}'} &
\leq\frac{\gamma_n^2\beta_n^2}{n^4p^2\theta_n^4}\sum\limits_{i=1}^{n}\sum_{\substack{j=1\\j\neq i}}^{n}\sum\limits_{i'=1}^{n}\sum\limits_{j'=1}^{i'-1}\mathbb{E}\left[\Lambda_j(i)Z_{i,j}Z_{i',j'}\right]
	\end{align*}
	Applying Lemma \ref{lem:mischtermkunterbunt3} yields
	$\EW{\eta_{21}\eta_{22}'}\lesssim\frac{np}{2\theta_n^4}\gamma_n^2\beta_n^2.$
	Finally,
	\begin{align*}
	\EW{\left(\eta_{22}'\right)^2}
	&=\frac{1}{n^4\theta_n^4}\sum\limits_{i=1}^{n}\sum\limits_{j=1}^{i-1}\sum\limits_{i'=1}^{n}\sum\limits_{j'=1}^{i'-1}\E\Bigl[\tilde{G}_{i}(j,j)\1_{\bigl\{|\tilde{H}(j,j)|<\frac{\tilde{\eps}\theta_n^2n}{p}\bigr\}}\tilde{G}_{i'}(j',j')\1_{\bigl\{|\tilde{H}(j',j')|<\frac{\tilde{\eps}\theta_n^2n}{p}\bigr\}}\Bigr]\\
&=\frac{1}{n^4\theta_n^4}\sum\limits_{i=1}^{n}\sum\limits_{j=1}^{i-1}\sum\limits_{i'=1}^{n}\E\Bigl[\tilde{G}_{i}(j,j)\1_{\bigl\{|\tilde{H}(j,j)|<\frac{\tilde{\eps}\theta_n^2n}{p}\bigr\}}\tilde{G}_{i'}(j,j)\1_{\bigl\{|\tilde{H}(j,j)|<\frac{\tilde{\eps}\theta_n^2n}{p}\bigr\}}\Bigr]\\
	&\hspace{1cm}+\frac{1}{n^4\theta_n^4}\sum\limits_{i=1}^{n}\sum\limits_{j=1}^{i-1}\sum\limits_{i'=1}^{n}\sum\limits_{\substack{j'=1\\j'\neq j}}^{i'-1}\E\Bigl[\tilde{G}_{i}(j,j)\1_{\bigl\{|\tilde{H}(j,j)|<\frac{\tilde{\eps}\theta_n^2n}{p}\bigr\}}\tilde{G}_{i'}(j',j')\1_{\bigl\{|\tilde{H}(j',j')|<\frac{\tilde{\eps}\theta_n^2n}{p}\bigr\}}\Bigr]\\
	\intertext{and applying \eqref{eq:tildeG} to both sums gives} 
	&\leq\frac{1}{n^4\theta_n^4}\sum\limits_{i=1}^{n}\sum\limits_{j=1}^{i-1}\sum\limits_{i'=1}^{n}\E\Bigl[Z_{i,j}Z_{i',j}\tilde{H}^2(j,j)\1_{\bigl\{|\tilde{H}(j,j)|<\frac{\tilde{\eps}\theta_n^2n}{p}\bigr\}}\Bigr]\\
	&\hspace{1cm}+\frac{1}{n^4\theta_n^4}\sum\limits_{i=1}^{n}\sum\limits_{j=1}^{i-1}\sum\limits_{i'=1}^{n}\sum\limits_{\substack{j'=1\\j'\neq j}}^{i'-1}\E\bigl[Z_{i,j}Z_{i',j'}\tilde{H}(j,j)\tilde{H}(j',j')\bigr]
	\end{align*}
By independence we arrive at
	\begin{align*}
&\EW{\left(\eta_{22}'\right)^2}	\le\frac{1}{n^4\theta_n^4}\sum\limits_{i=1}^{n}\sum\limits_{j=1}^{i-1}\sum\limits_{i'=1}^{n}
\E\bigl[Z_{i,j}Z_{i',j}\bigr]\E\bigl[\tilde{H}^2(j,j)
\1_{\left\{|\tilde{H}(j,j)|<\frac{\tilde{\eps}\theta_n^2n}{p}\right\}}\bigr]\\
&\hspace{1cm}+\frac{1}{n^4\theta_n^4}\sum\limits_{i=1}^{n}\sum\limits_{j=1}^{i-1}\sum\limits_{i'=1}^{n}\sum\limits_{\substack{j'=1\\j'\neq j}}^{i'-1}\EW{Z_{i,j}}\EW{Z_{i',j'}}\E\bigl[\tilde{H}(j,j)\bigr]\E\bigl[\tilde{H}(j',j')\bigr]\\	&\leq\frac{1}{n^4\theta_n^4}\sum\limits_{i=1}^{n}\sum\limits_{j=1}^{i-1}\sum\limits_{i'=1}^{n}\EW{Z_{i,j}Z_{i',j}}\frac{\tilde{\eps}\theta_n^2n}{p}\EW{\tilde{H}(j,j)}\\
	&\hspace{1cm}+\frac{1}{n^4\theta_n^4}\sum\limits_{i=1}^{n}\sum\limits_{j=1}^{i-1}\sum\limits_{i'=1}^{n}\sum\limits_{\substack{j'=1\\j'\neq j}}^{i'-1}\EW{Z_{i,j}}\EW{Z_{i',j'}}\E\bigl[\tilde{H}(j,j)\bigr]\E\bigl[\tilde{H}(j',j')\bigr]\\
	
&\leq\frac{1}{n^4\theta_n^4}\sum\limits_{i=1}^{n}\sum\limits_{j=1}^{i-1}\sum\limits_{i'=1}^{n}\EW{Z_{i,j}Z_{i',j}}\frac{\tilde{\eps}\theta_n^2n}{p}\frac{\beta_n^2}{p}
+\frac{1}{n^4\theta_n^4}\sum\limits_{i=1}^{n}\sum\limits_{j=1}^{i-1}\sum\limits_{i'=1}^{n}\sum\limits_{\substack{j'=1\\j'\neq j}}^{i'-1}\EW{Z_{i,j}}\EW{Z_{i',j'}}\left(\frac{\beta_n^2}{p}\right)^2
	\end{align*}
	where we applied Corollary \ref{cor:expectG} and used the bound $\beta_n^2-2\gamma_n^2\leq \beta_n^2$. 
 By $Z_{i,j}^2=Z_{i,j}$
\begin{align*}
	\EW{\left(\eta_{22}'\right)^2}
&\leq\frac{1}{n^4\theta_n^4}\frac{\tilde{\eps}\theta_n^2n}{p}\frac{\beta_n^2}{p}\left(\binom{n}{2}p+\binom{n}{2}(n-1)p^2\right)+\frac{1}{n^4\theta_n^4}\binom{n}{2}^2p^2\left(\frac{\beta_n^2}{p}\right)^2\\
&\sim\frac{1}{n^4\theta_n^4}\frac{\tilde{\eps}\theta_n^2n}{p}\frac{\beta_n^2}{p}\binom{n}{2}(n-1)p^2+\frac{1}{n^4\theta_n^4}\binom{n}{2}^2p^2\left(\frac{\beta_n^2}{p}\right)^2
	\sim\tilde{\eps}+\frac{\beta_n^4}{4\theta_n^4}
	\end{align*}
	by $\frac{\beta_n^2}{2}\leq\theta_n^2$. Thus, after a quick calculation
\begin{align*}
\EW{\left(\eta_{21}+\eta_{22}'\right)^2}
\lesssim \frac{(np)^2\gamma_n^4}{\theta_n^4} + 2 \frac{np}{2\theta_n^4}\gamma_n^2\beta_n^2+\frac{\beta_n^4}{4\theta_n^4}+\tilde{\eps}
&=\frac{1}{\theta_n^4}\left(np\gamma_n^2+\frac{1}{2}\beta_n^2\right)^2+\tilde\eps
=1+\tilde{\eps}
\label{eq:secondmomsumme}
\end{align*}

Putting this together with \eqref{eq:eta21+eta22''conv}, we obtain
$\V\bigl(\eta_{21}+\eta_{22}'\bigr)
\lesssim\tilde{\eps}+o(1).$

Hence $\eta_{21}+\eta_{22}'$ converges in probability to the limit of its expectation, which is 1. 
It remains to show that $\eta_{23},\eta_{24}\Pconv 0$, then
$\eta_{21}+\eta_{22}+\eta_{23}+\eta_{24}\Pconv 1.$
We start with $\eta_{23}$. By similar calculations as above:
\begin{align*}
\EW{\eta_{23}^2}
&=\frac{1}{n^4\theta_n^4}\sum\limits_{i=1}^{n}\sum\limits_{\substack{j,k=1\\j\neq k}}^{i-1}\sum\limits_{i'=1}^{n}\sum\limits_{\substack{j',k'=1\\j'\neq k'}}^{i'-1}\mathbb{E}\left[\tilde{G}_i(j,k)\tilde{G}_{i'}(j',k')\right]\\
&=\frac{1}{n^4\theta_n^4}\sum\limits_{i=1}^{n}\sum\limits_{\substack{j,k=1\\j\neq k}}^{i-1}\sum\limits_{i'=1}^{n}\sum\limits_{\substack{j',k'=1\\j'\neq k'}}^{i'-1}\EW{\tilde{\Phi}(i',j')\tilde{\Phi}(i',k')\tilde{\Phi}(i,j)\tilde{\Phi}(i,k)}
\end{align*}
which converges to 0 by Lemma \ref{lem:asymptgtilde}.
Thus
$\eta_{23}^2\Pconv 0$ follows.

Finally for $\eta_{24}$,
\begin{align*}
\EW{\eta_{24}^2}&=\frac{4}{n^4\theta_n^4}\sum\limits_{i=1}^{n}\sum_{\substack{j=1\\j\neq i}}^{n}\sum\limits_{m=1}^{i-1}\sum\limits_{i'=1}^{n}\sum_{\substack{j'=1\\j'\neq i'}}^{n}\sum\limits_{m'=1}^{i'-1}
\mathbb{E}\Bigl[\E\bigl[\Psi_j(i)\tilde{\Phi}(i,m)\mid X_m,(Z_{l,i})_{\substack{l=1,\dots,i-1}}\bigr]\Bigr.\\
&\hspace{-0.5cm}\phantom{\frac{4}{n^4\theta_n^4}\sum\limits_{i=1}^{n}\sum_{\substack{j=1\\j\neq i}}^{n}\sum\limits_{m=1}^{i-1}\sum\limits_{i'=1}^{n}\sum_{\substack{j'=1\\j'\neq i'}}^{n}\sum\limits_{m'=1}^{i'-1}\mathbb{E}}\cdot\Bigl.\E\bigr[\Psi_{j'}({i'})\tilde{\Phi}({i'},{m'})\mid X_{m'},(Z_{l,i'})_{\substack{l=1,\dots,i'-1}}\bigr]\Bigr]
\end{align*}
which converges to 0 by Lemma \ref{lem:eta24} and consequently
$\eta_{24}\Pconv 0.$
By the above estimates for the $\eta_{2i}, i=1, \ldots 4$ we conclude 
\[\eta_2\Pconv 1,\]
which completes the proof.
\end{proof}

\section{Alternative conditions for the Central Limit Theorem}
As mentioned above, it may be sometimes cumbersome to check the condition \eqref{6}--\eqref{21} in Theorem \ref{theo2}. We now give an alternative.

\begin{prop}\label{prop:altcond}
The conditions \eqref{6}-\eqref{21} follow from
\begin{align}
n^2\theta_n^{-2}\EW{\Psi_2^2(1)\1_{\{|\Psi_2(1)|\geq \eps\theta_n\}}}&\nconv 0\label{6neu}\tag{C1''}\\
\theta_n^{-2}\EW{\Phi^2(1,2)\1_{\left\{|\Phi(1,2)|\geq \eps\theta_n n\right\}}}&\nconv 0\label{20neu}\tag{C2''}\\
p\,\theta_n^{-2}\E\Bigl[H(1,1)\1_{\left\{|H(1,1)|\geq \frac{\eps\theta_n^2 n}{p}\right\}}\Bigr]&\nconv 0,\label{21neu}\tag{C3''}
\end{align}
for any $\eps>0$.
\end{prop}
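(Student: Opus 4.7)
The strategy is to establish the three implications \eqref{6neu}$\Rightarrow$\eqref{6}, \eqref{20neu}$\Rightarrow$\eqref{20}, and \eqref{21neu}$\Rightarrow$\eqref{21} separately. In each case I want to reduce a Lindeberg-type expression involving a derived quantity (the sum $\sum_{j=2}^n\Psi_j(1)$, the centered kernel $\tilde\Phi$, or the conditional second moment $\tilde H$) to the same type of expression for the underlying atomic object ($\Psi_2(1)$, $\Phi$, or $H$). Throughout, the comparisons $\theta_n^2\geq np\gamma_n^2$ and $\theta_n^2\geq\tfrac12\beta_n^2$ from \eqref{defn}, combined with $np\to\infty$, will absorb residual contributions.

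For \eqref{6}, I would use the factorization $\Psi_j(1)=Z_{1j}\,g(X_1)$ with $g(x)=\EW{h_n(x,X_{n2})}$, giving $\sum_{j=2}^n\Psi_j(1)=g(X_1)\,T$ for $T=\sum_{j=2}^n Z_{1j}$ independent of $X_1$. Since $T\leq n$ pointwise, the event $\{|g(X_1)T|\geq\eps\theta_n n\}$ is contained in $\{|g(X_1)|\geq\eps\theta_n\}$, and by independence
\[\EW{(g(X_1)T)^2\,\1_{\{|g(X_1)T|\geq\eps\theta_n n\}}}\;\leq\;\EW{T^2}\,\EW{g(X_1)^2\1_{\{|g(X_1)|\geq\eps\theta_n\}}}.\]
Using $\EW{T^2}\leq 2(np)^2$ (valid eventually since $np\to\infty$) together with $\EW{g(X_1)^2\1_{\{|g(X_1)|\geq\eps\theta_n\}}}=p^{-1}\EW{\Psi_2^2(1)\1_{\{|\Psi_2(1)|\geq\eps\theta_n\}}}$ (because $|\Psi_2(1)|\geq\eps\theta_n$ forces $Z_{12}=1$), the left-hand side of \eqref{6} is dominated by $(2p/n)$ times the left-hand side of \eqref{6neu} and hence tends to $0$.

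For both \eqref{20} and \eqref{21} the key tool is the elementary pointwise inequality
\[A^2\,\1_{\{B\geq s\}}\;\leq\; A^2\,\1_{\{A\geq s\}}+B^2\,\1_{\{B\geq s\}},\]
obtained by splitting $\{B\geq s\}$ according to $A\leq B$ or $A>B$. For \eqref{20} I combine $\tilde\Phi^2\leq 3(\Phi^2+\Psi_2^2(1)+\Psi_1^2(2))$ with the triangle-inequality bound $\1_{\{|\tilde\Phi|\geq\eps\theta_n n\}}\leq\1_{\{|\Phi|\geq\eps\theta_n n/3\}}+\1_{\{|\Psi_2(1)|\geq\eps\theta_n n/3\}}+\1_{\{|\Psi_1(2)|\geq\eps\theta_n n/3\}}$; the resulting nine terms of the form $\EW{A^2\1_{\{B\geq s\}}}$ collapse via the above pointwise inequality onto three self-truncating diagonals. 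The $(\Phi,\Phi)$-diagonal is \eqref{20neu} with $\eps/3$, while each $(\Psi,\Psi)$-diagonal is trivially $o(\theta_n^2)$ because $\EW{\Psi_2^2(1)}=\gamma_n^2\leq\theta_n^2/(np)\to 0$. For \eqref{21}, $\tilde h^2\leq 3(h^2+g(X_1)^2+g(X_2)^2)$ together with Lemma~\ref{lem:lem phi und h} gives $\tilde H(1,1)\leq 3\bigl(H(1,1)+g(X_1)^2+\gamma_n^2/p\bigr)$; since $\gamma_n^2/p$ is eventually negligible relative to the truncation level $\eps\theta_n^2n/p$ (as $n^2p\to\infty$), the same pointwise trick reduces the Lindeberg expression to \eqref{21neu} (with a rescaled $\eps$) plus the harmless residual $p\theta_n^{-2}\EW{g(X_1)^2}=\gamma_n^2/\theta_n^2\leq 1/(np)\to 0$.

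I expect the main obstacle to lie in the bookkeeping of the \eqref{20}-step: the three-way decomposition $\tilde\Phi=\Phi-\Psi_2(1)-\Psi_1(2)$ generates cross terms mixing $\Phi$ with $\Psi$ that do not a priori have Lindeberg structure. The pointwise inequality above is precisely the device that allows every such cross term to be absorbed into self-truncating ones, without appealing to higher moments not assumed in the hypotheses.
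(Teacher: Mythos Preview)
Your proposal is correct; each of the three implications works as you outline. The route differs from the paper's in two places worth noting.

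For \eqref{6}, the paper simply applies Lemma~\ref{lem:quadratabsch} (the generic inequality $\bigl(\sum_{l=1}^k a_l\bigr)^2\1_{\{|\sum a_l|\geq\eps\}}\leq k^2\sum_l a_l^2\1_{\{|a_l|\geq\eps/k\}}$) with $a_l=\Psi_l(1)$, landing directly on the left-hand side of \eqref{6neu}. You instead exploit the factorization $\sum_j\Psi_j(1)=g(X_1)\sum_jZ_{1j}$, which is special to this problem because all $\Psi_j(1)$ share the same $X$-dependence; this buys you the sharper prefactor $2p/n$ at the cost of a slightly less transparent argument. For \eqref{20} the two proofs coincide in substance: your pointwise inequality $A^2\1_{\{B\geq s\}}\leq A^2\1_{\{A\geq s\}}+B^2\1_{\{B\geq s\}}$ is exactly the mechanism underlying Lemma~\ref{lem:quadratabsch}, so your nine-term expansion collapses to the same three diagonals the paper obtains in one stroke (and the paper then invokes \eqref{6neu} for the $\Psi$-diagonals rather than the cruder bound $\gamma_n^2\leq\theta_n^2/(np)$ you use, but either works). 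The genuine divergence is in \eqref{21}: the paper computes the exact identity $\tilde H(1,1)=H(1,1)-g(X_1)^2-2B+\gamma_n^2/p$ with $B=\E[g(X_2)h(1,2)\mid X_1]$ and controls the awkward cross term $B$ via Cauchy--Schwarz, whereas your coarser bound $\tilde H(1,1)\leq 3\bigl(H(1,1)+g(X_1)^2+\gamma_n^2/p\bigr)$, obtained from $\tilde h^2\leq 3(h^2+g(X_1)^2+g(X_2)^2)$ \emph{before} taking the conditional expectation, sidesteps $B$ entirely and is arguably cleaner.
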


\begin{proof}
We use the definitions of $\tilde{\Phi}$ and $\tilde{G}$:
\begin{enumerate}[leftmargin=*]
\item[\eqref{6}:] We use Lemma \ref{lem:quadratabsch} for $l=2,\dots,n$ and $a_l=\Psi_l(1)$. Then
\begin{align*}
\frac{1}{n\theta_n^2}\E\Bigl[\bigl(\sum\limits_{j=2}^{n}\Psi_j(1)\bigr)^2\1_{\bigl\{\bigl|\sum\limits_{j=2}^{n}\Psi_j(1)\bigr|\geq\eps\theta_n n\bigr\}}\Bigr]&\leq \frac{1}{n\theta_n^2}\E\Bigl[n^2\sum\limits_{j=2}^{n}\Psi_j^2(1)\1_{\{|\Psi_j(1)|\geq\eps\theta_n\}}\Bigr]\\
&\leq \frac{n^2}{\theta_n^2}\E\bigl[\Psi_2^2(1)\1_{\{|\Psi_2(1)|\geq\eps\theta_n\}}\bigr] \to 0
\end{align*}
by identical distribution and \eqref{6neu}. Therefore, \eqref{6} is true.
	\item[\eqref{20}:] By Lemma \ref{lem:quadratabsch} for $k=3$,
	\begin{align*}
	\theta_n^{-2}&\EW{\tilde{\Phi}^2(1,2)\1_{\left\{|\tilde{\Phi}(1,2)|\geq \eps\theta_n n\right\}}}\leq 9\theta_n^{-2}\EW{\Phi^2(1,2)\1_{\{|\Phi(1,2)|\geq\frac{\eps\theta_n n}{3}\}}}\\
	&\phantom{=E}+9\theta_n^{-2}\EW{\Psi_2^2(1)\1_{\left\{|\Psi_2(1)|\geq\frac{\eps\theta_n n}{3}\right\}}}+9\theta_n^{-2}\EW{\Psi_1^2(2)\1_{\left\{|\Psi_1(2)|\geq\frac{\eps\theta_n n}{3}\right\}}}\\
	&\leq 9\theta_n^{-2}\EW{\Phi^2(1,2)\1_{\{|\Phi(1,2)|\geq\frac{\eps\theta_n n}{3}\}}}\\
	&\phantom{=E}+9\theta_n^{-2}\EW{\Psi_2^2(1)\1_{\left\{|\Psi_2(1)|\geq\frac{\eps\theta_n}{3}\right\}}}+9\theta_n^{-2}\EW{\Psi_1^2(2)\1_{\left\{|\Psi_1(2)|\geq\frac{\eps\theta_n}{3}\right\}}}
	\end{align*}
	By \eqref{20neu}, the first term converges to 0. By \eqref{6neu}, so do the other two. Therefore, \eqref{20} is true.
	\item[\eqref{21}:]
	For $i\neq k$, we have
	\begin{align}
	\tilde{H}(i,i)&=\E\bigl[\tilde{h}(i,k)\tilde{h}(i,k)\mid X_i\bigr]\notag\\	
	&=\mathbb{E}\Bigl[h(i,k)h(i,k)-\EW{h(i,k)\mid X_i}h(i,k)-\EW{h(i,k)\mid X_k}h(i,k)\notag\\
	&\phantom{=E}\left.-\EW{h(i,k)\mid X_i}h(i,k)-\EW{h(i,k)\mid X_k}h(i,k)\right.\notag\\
	&\phantom{=E}\left.+\EW{h(i,k)\mid X_i}\EW{h(i,k)\mid X_i}+\EW{h(i,k)\mid X_i}\EW{h(i,k)\mid X_k}\right.\notag\\
	&\phantom{=E}+\EW{h(i,k)\mid X_k}\EW{h(i,k)\mid X_i}+\EW{h(i,k)\mid X_k}\EW{h(i,k)\mid X_k}\mid X_i\Bigr]\notag
\end{align}	
By measurability and independence we obtain after a short computation
\begin{align}
\tilde{H}(i,i)	
	&=H(i,i)-\EW{h(i,k)\mid X_i}^2-\EW{\EW{h(i,k)\mid X_k}h(i,k)\mid X_i}\notag\\
	&\phantom{=E}-\EW{h(i,k)\mid X_i}^2-\EW{\EW{h(i,k)\mid X_k}h(i,k)\mid X_i}\notag\\
	&\phantom{=E}+\EW{h(i,k)\mid X_i}^2+\EW{h(i,k)\mid X_i}\EW{\EW{h(i,k)\mid X_k}}\notag\\
	&\phantom{=E}+\EW{h(i,k)\mid X_i}\EW{\EW{h(i,k)\mid X_k}}+\EW{\EW{h(i,k)\mid X_k}^2}\notag\\
	\intertext{As $h(i,j)$ is centered, and by Lemma \ref{lem:lem phi und h}}
	&=H(i,i)-\EW{h(i,k)\mid X_i}^2-2\EW{\EW{h(i,k)\mid X_k}h(i,k)\mid X_i}+\frac{\gamma_n^2}{p}\notag\\
	&\eqqcolon H(i,i)-A-2B+\frac{\gamma_n^2}{p}.
	\end{align}

Firstly, by Lemma \ref{lem:lem phi und h}
	$\EW{A}=\EW{\EW{h(i,k)\mid X_k}^2}=\frac{\gamma_n^2}{p}$.
	
	By Cauchy-Schwarz and Lemma \ref{lem:lem phi und h}, we  obtain
	\begin{align*}\bigl|\E\bigl[2B\1_{\left\{|2B|\geq\frac{\eps\theta_n^2n}{5p}\right\}}\bigr]\bigr|&\leq2\sqrt{\EW{\EW{h(i,k)\mid X_k}^2}\E\bigl[h(i,k)^2\1_{\{|2B|\geq\frac{\eps\theta_n^2n}{5p}\}}\bigr]}\\
	&\leq2\sqrt{\EW{\EW{h(i,k)\mid X_k}^2}\E\bigl[h(i,k)^2\bigr]}\leq 2\frac{\gamma_n\beta_n}{p}\end{align*}
	We obtain by similar arguments as in Lemma \ref{lem:quadratabsch}
	\begin{align*}
	&p\,\theta_n^{-2}\E\Bigl[\tilde{H}(1,1)\1_{\bigl\{|\tilde{H}(1,1)|\geq \frac{\eps\theta_n^2 n}{p}\bigr\}}\Bigr]\\
	&\phantom{=}\leq5p\,\theta_n^{-2}\Bigl(\E\bigl[H(1,1)\1_{\bigl\{|H(1,1)|\geq\frac{\eps\theta_n^2n}{5p}\bigr\}}\bigr]+\Bigl|\E\bigl[A\1_{\left\{|A|\geq\frac{\eps\theta_n^2n}{5p}\right\}}\bigr]\Bigr|+\Bigl|\E\bigl[2B\1_{\left\{|2B|\geq\frac{\eps\theta_n^2n}{5p}\right\}}\bigr]\Bigr|+\frac{\gamma_n^2}{p}\Bigr)\\
	&\phantom{=}\leq5p\,\theta_n^{-2}\Bigl(\E\Bigl[H(1,1)\1_{\left\{|H(1,1)|\geq\frac{\eps\theta_n^2n}{5p}\right\}}\Bigr]+\left|\EW{A}\right|+2\frac{\gamma_n\beta_n}{p}+\frac{\gamma_n^2}{p}\Bigr)
\end{align*}
\begin{align*}	
	&\phantom{=}\leq5p\,\theta_n^{-2}\Bigl(\E\Bigl[H(1,1)\1_{\left\{|H(1,1)|\geq\frac{\eps\theta_n^2n}{5p}\right\}}\Bigr]+2\frac{\gamma_n^2}{p}+2\frac{\gamma_n\beta_n}{p}\Bigr)\\
	&\phantom{=}=5p\,\theta_n^{-2}\E\Bigl[H(1,1)\1_{\left\{|H(1,1)|\geq\frac{\eps\theta_n^2n}{5p}\right\}}\Bigr]+10p\,\theta_n^{-2}\frac{\gamma_n^2}{p}+10p\,\theta_n^{-2}\frac{\beta_n\gamma_n}{p}\\
	\end{align*}
	Since $\theta_n^2\geq np\gamma_n^2$ and $\theta_n^2\geq\frac{1}{2}\beta_n^2$, the last  two terms immediately converge to 0. By \eqref{21neu}, so does the first one. Therefore, \eqref{21} is true.
\end{enumerate}
This completes the proof.
\end{proof}

\begin{appendices}
\section{Appendix}
We start the appendix by proving the lemmas in the introduction.

\begin{proof}[Proof of Lemma \ref{lem:lem phi und h}]
We begin with (\ref{properties:phi})
\begin{align*}
\EW{\tilde{\Phi}^2(i,j)}&=\EW{\left(\Phi(i,j)-\Psi_j(i)-\Psi_i(j)\right)^2}\\
&=\mathbb{E}\left[\Phi^2(i,j)-2\Psi_j(i)\Phi(i,j)-2\Psi_i(j)\Phi(i,j)
+\Psi_j^2(i)+2\Psi_j(i)\Psi_i(j)+\Psi_i^2(j)\right]\\
&=\EW{\Phi^2(i,j)}-4\EW{\Psi_j(i)\Phi(i,j)}+2\EW{\Psi_j^2(i)}+2\EW{\Psi_j(i)\Psi_i(j)}
\end{align*}
due to identical distribution.
The last term is 0, since $\Psi_j(i)$ and $\Psi_i(j)$ are independent and centered. The first and third term are known from \eqref{defn}. As for the second term,
\begin{align*}
\EW{\Psi_j(i)\Phi(i,j)}&=\EW{\EW{\Phi(i,j)\mid X_i, Z_{ij}}\Phi(i,j)}\\&=\EW{\EW{\EW{\Phi(i,j)\mid X_i, Z_{ij}}\Phi(i,j)\mid X_i,Z_{ij}}}\\&=\EW{\EW{\Phi(i,j)\mid X_i,Z_{ij}}^2}=\EW{\Psi_j^2(i)}=\gamma_n^2,
\end{align*}
by measurability, so that
$\EW{\tilde{\Phi}^2(i,j)}=\beta_n^2-4\gamma_n^2+2\gamma_n^2=\beta_n^2-2\gamma_n^2.$

As for the statements on $h$ and $\tilde h$, i.e.\ (\ref{properties:h}):
We have
\[\gamma_n^2=\EW{\Psi_j^2(i)}=\EW{Z_{i,j}}\EW{\EW{h(i,j)\mid X_i}^2}=p\EW{\EW{h(i,j)\mid X_i}^2}\]
by definition of $\Psi_j(i)$ and independence. Moreover,
\[\beta_n^2=\EW{\Phi^2(i,j)}=\EW{Z_{i,j}h^2(i,j)}=\EW{Z_{i,j}}\EW{h^2(i,j)}=p\EW{h^2(i,j)}\]
by definition of $\Phi(i,j)$ and independence. Finally,
\[\beta_n^2-2\gamma_n^2=\EW{\tilde{\Phi}^2(i,j)}=\EW{Z_{i,j}\tilde{h}^2(i,j)}=\EW{Z_{i,j}}\EW{\tilde{h}^2(i,j)}=p\EW{\tilde{h}^2(i,j)}\]
by (\ref{properties:phi}) and independence.
This proves the claim.
\end{proof}

\begin{proof}[Proof of Lemma \ref{lem:hoeffing_dec}]
We have
\begin{align}
\CUn&=\binom{n}{2}^{-1}\sum\limits_{1\leq i<j\leq n}\tilde{\Phi}(i,j)+\binom{n}{2}^{-1}\sum\limits_{1\leq i<j\leq n}\Psi_j(i)+\binom{n}{2}^{-1}\sum\limits_{1\leq i<j\leq n}\Psi_i(j)\notag\\
&=\binom{n}{2}^{-1}\sum\limits_{1\leq i<j\leq n}\tilde{\Phi}(i,j)+\binom{n}{2}^{-1}\sum\limits_{1\leq i<j\leq n}\Psi_j(i)+\binom{n}{2}^{-1}\sum\limits_{1\leq j<i\leq n}\Psi_j(i)\notag\\
&=\binom{n}{2}^{-1}\Bigl(\sum\limits_{1\leq i<j\leq n}\tilde{\Phi}(i,j)+\sum_{i=1}^{n}\sum_{\substack{j=1\\j\neq i}}^{n}\Psi_j(i)\Bigr)\notag
=\binom{n}{2}^{-1}\Bigl(\sum\limits_{1\leq j<i\leq n}\tilde{\Phi}(i,j)+\sum_{i=1}^{n}\sum_{\substack{j=1\\j\neq i}}^{n}\Psi_j(i)\Bigr).\notag
\end{align}
\end{proof}

\begin{proof}[Proof of Lemma \ref{lem:variance}]
Since $\CUn$ ist centered, we obtain from \eqref{eq:hoeffdingdecomposition}
\begin{align*}
\V{\CUn}&=\EW{\CUn^2}\\
&=\binom{n}{2}^{-2}\E\Bigl[\sum\limits_{i<j}\tilde{\Phi}^2(i,j)\Bigr]+\binom{n}{2}^{-2}\mathbb{E} \Bigl[\sum\limits_{i<j}\sum\limits_{\substack{k<l\\\{i,j\}\neq\{k,l\}}}\tilde{\Phi}(i,j)\tilde{\Phi}(k,l)\Bigr]\\
&\phantom{=E}+2\binom{n}{2}^{-2}\EW{\sum\limits_{i<j}\tilde{\Phi}(i,j)\sum\limits_{k\neq l}\Psi_l(k)}\\
&\phantom{=E}+\binom{n}{2}^{-2}\mathbb{E}\Bigl[\sum\limits_{j\neq i}\Psi_j^2(i)\Bigr]
+\binom{n}{2}^{-2}\E\Bigl[\sum\limits_{j\neq i}\sum\limits_{\substack{l\neq k\\\{i,j\}\neq\{k,l\}}}\Psi_j(i)\Psi_l(k)\Bigr]\\
&=:A+B+C+D+E
\end{align*}
Let us consider the summands separately: Note that
\begin{equation}
A=\binom{n}{2}^{-1}\EW{\tilde{\Phi}^2(1,2)}=\binom{n}{2}^{-1}\left(\beta_n^2-2\gamma_n^2\right).
\label{eq:A}
\end{equation}
Moreover, $B=C=0$ as follows from Lemma \ref{lem:phikleinindep}.
For $D$ notice that
\begin{equation}
D=\binom{n}{2}^{-1}2\gamma_n^2
\label{eq:D}
\end{equation}
Finally, consider $E$. For $k\neq i$, the expectation is 0 (see the arguments given in the proof of Lemma \ref{lem:phikleinindep}).
For $k=i$, we have that $j\neq l$ and therefore
\begin{align*}
\EW{\Psi_j(i)\Psi_l(k)}&=\EW{Z_{ij}Z_{il}\EW{h(i,j)\mid X_i}\EW{h(i,l)\mid X_i}}=\EW{Z_{ij}Z_{il}\EW{h(i,j)\mid X_i}^2}\\
&=\EW{Z_{il}\Psi_j^2(i)} =\EW{Z_{il}}\EW{\Psi_j^2(i)}=p\gamma_n^2
\end{align*}
Thus
\begin{equation}
E=\binom{n}{2}^{-1}\cdot2(n-2)p\gamma_n^2
\label{eq:E}
\end{equation}
and
\begin{align*}
\V{\CUn}
=\binom{n}{2}^{-1}\left(\beta_n^2+2(n-2)p\gamma_n^2\right)
\sim\binom{n}{2}^{-1}\left(\beta_n^2+2np\gamma_n^2\right)= \binom{n}{2}^{-1}2\theta_n^2,
\end{align*}
from which we conclude the assertion.
\end{proof}

We now prove a couple of lemmas that were used in the proof of Theorem \ref{theo2} in Section 4.

\begin{cor}\label{cor:expectG}
For any $i\neq j$
$\EW{\tilde{G}_j(i,i)}=\beta_n^2-2\gamma_n^2$
and
$\EW{\tilde{H}(i,i)}=\frac{\beta_n^2-2\gamma_n^2}{p}.$
\end{cor}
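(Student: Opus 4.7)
The plan is to reduce both identities directly to Lemma~\ref{lem:lem phi und h} via the defining formula \eqref{eq:tildeG} applied at the diagonal $i=j$. The key observation is that with $i=j$ the conditioning variables $(X_i,X_j,Z_{ik},Z_{jk})$ collapse to $(X_i,Z_{ik})$, and the factor $Z_{ik}Z_{jk}$ collapses to $Z_{ik}^2=Z_{ik}$; hence $\tilde{G}_k(i,i)=\EW{\tilde{\Phi}^2(i,k)\mid X_i,Z_{ik}}=Z_{ik}\tilde{H}(i,i)$.

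First I would compute $\EW{\tilde{G}_j(i,i)}$: applying the tower property and the observation above,
\[
\EW{\tilde{G}_j(i,i)}=\EW{\EW{\tilde{\Phi}^2(i,j)\mid X_i,Z_{ij}}}=\EW{\tilde{\Phi}^2(i,j)}=\beta_n^2-2\gamma_n^2,
\]
which is precisely Lemma~\ref{lem:lem phi und h}\,(\ref{properties:phi}).

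For the second identity I would use $\tilde{\Phi}(i,k)=Z_{ik}\tilde{h}(i,k)$ and $Z_{ik}^2=Z_{ik}$ to rewrite
\[
\tilde{G}_k(i,i)=\EW{Z_{ik}\tilde{h}^2(i,k)\mid X_i,Z_{ik}}=Z_{ik}\EW{\tilde{h}^2(i,k)\mid X_i},
\]
which, compared with the definition $\tilde{G}_k(i,i)=Z_{ik}\tilde{H}(i,i)$, yields $\tilde{H}(i,i)=\EW{\tilde{h}^2(i,k)\mid X_i}$. Taking expectations and invoking Lemma~\ref{lem:lem phi und h}\,(\ref{properties:h}) gives
\[
\EW{\tilde{H}(i,i)}=\EW{\tilde{h}^2(i,k)}=\frac{\beta_n^2-2\gamma_n^2}{p}.
\]
Alternatively, one can derive the $\tilde{H}$-statement from the $\tilde{G}$-statement using independence of $Z_{ij}$ from $\tilde{H}(i,i)$ (the latter is $\sigma(X_i)$-measurable), so that $\EW{\tilde{G}_j(i,i)}=\EW{Z_{ij}}\EW{\tilde{H}(i,i)}=p\,\EW{\tilde{H}(i,i)}$, and solving for $\EW{\tilde{H}(i,i)}$.

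There is no real obstacle here; the only subtlety is being careful that when collapsing $j\to i$ in the defining identity \eqref{eq:tildeG}, one correctly reduces $Z_{ik}Z_{jk}$ to $Z_{ik}$ (rather than leaving a spurious second factor) and that $\tilde{H}(i,i)$ remains independent of the Bernoulli weights, which is guaranteed by the construction of the $Z$-array and the $\sigma(X_i)$-measurability of $\tilde{H}(i,i)$.
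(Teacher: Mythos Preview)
Your proof is correct and essentially identical to the paper's: the first identity is obtained exactly as in the paper via the tower property and Lemma~\ref{lem:lem phi und h}\,(\ref{properties:phi}), and your ``alternative'' derivation of the $\tilde H$-statement from the $\tilde G$-statement using independence of $Z_{ij}$ from $\tilde H(i,i)$ is precisely the paper's argument (your first derivation via $\tilde H(i,i)=\EW{\tilde h^2(i,k)\mid X_i}$ and Lemma~\ref{lem:lem phi und h}\,(\ref{properties:h}) is an equally valid minor variant).
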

\begin{proof}
The claim follows immediately from the tower property, the definition of $\tilde{G}$ and Lemma \ref{lem:lem phi und h}:
\[\EW{\tilde{G}_j(i,i)}=\EW{\EW{\tilde{\Phi}(i,j)\tilde{\Phi}(i,j)\mid X_i,Z_{i,j}}}=\EW{\tilde{\Phi}^2(i,j)}=\beta_n^2-2\gamma_n^2.\]
For $\tilde{H}$, one can use
\[p\EW{\tilde{H}(i,i)}=\EW{Z_{i,j}}\EW{\tilde{H}(i,i)}=\EW{Z_{i,j}\tilde{H}(i,i)}=\EW{\tilde{G}_j(i,i)}\]
and apply the above result.
\end{proof}

\begin{lem}\label{lem:phikleinindep}
For $\{i,j\}\neq\{k,l\}$ we have
$\EW{\tilde{\Phi}(i,j)\tilde{\Phi}(k,l)}=0.$

For any $\{i,j\},\{k,l\}$ we have
$\EW{\tilde{\Phi}(i,j)\Psi_l(k)}=0.$
\end{lem}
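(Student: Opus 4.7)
The plan is to exploit the strong centering property \eqref{eq:htildecentering}, which says $\EW{\tilde h(X_i, X_j) \mid X_k}=0$ for \emph{every} index $k$ (crucially including $k\in\{i,j\}$), together with the independence of the $Z$'s from the $X$'s and the pairwise independence of $Z_{ij}$ and $Z_{kl}$ whenever $\{i,j\}\neq\{k,l\}$. The strategy in each case is to condition on a $\sigma$-algebra containing every factor except one $\tilde h$, and then collapse that $\tilde h$ to zero via \eqref{eq:htildecentering}.

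For the first identity, since $\{i,j\}$ and $\{k,l\}$ are distinct unordered pairs of distinct indices, at least one of $k,l$ lies outside $\{i,j\}$. By the symmetry $\tilde h(X_k,X_l)=\tilde h(X_l,X_k)$ I may assume $l\notin\{i,j\}$, and since $l\neq k$ this means $l\notin\{i,j,k\}$. Then $X_l$ is independent of $(X_i,X_j,X_k)$ and of all the $Z$'s, so conditioning on $(X_i,X_j,X_k,Z_{ij},Z_{kl})$ and applying \eqref{eq:htildecentering} yields
\[\EW{\tilde h(X_k,X_l)\mid X_i,X_j,X_k,Z_{ij},Z_{kl}}=\EW{\tilde h(X_k,X_l)\mid X_k}=0.\]
Pulling the conditioning-measurable factor $Z_{ij}Z_{kl}\tilde h(X_i,X_j)$ out and taking expectations gives $\EW{\tilde\Phi(i,j)\tilde\Phi(k,l)}=0$.

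For the second identity I split according to $|\{i,j\}\cap\{k,l\}|$. If the intersection is empty, the two factors are independent and both centered, so the product has zero expectation. If the intersection has size one, by the symmetry of $\tilde h$ I may WLOG take $l=i$ (the subcase $k\in\{i,j\}$ is analogous); then $\{i,j\}\neq\{k,i\}$ forces $k\neq j$, so $Z_{ij}$ and $Z_{ki}$ are independent, and conditioning on $(X_i,X_k)$ gives $\EW{\tilde h(X_i,X_j)\mid X_i,X_k}=\EW{\tilde h(X_i,X_j)\mid X_i}=0$ by \eqref{eq:htildecentering}. Finally, if $\{k,l\}=\{i,j\}$, using $Z_{ij}^2=Z_{ij}$, $Z_{ij}=Z_{ji}$, and the symmetry of $h$, the product reduces to $Z_{ij}\tilde h(X_i,X_j)\EW{h(X_i,X_j)\mid X_m}$ for some $m\in\{i,j\}$; conditioning on $X_m$ and $Z_{ij}$ and applying \eqref{eq:htildecentering} one last time kills the expectation.

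The only real work is the case bookkeeping; the conceptual content is that \eqref{eq:htildecentering} is strong enough to let every case be reduced to a vanishing conditional expectation of $\tilde h$ given some $X_m$. The main obstacle to a clean write-up is organizing the overlap cases to avoid redundancy, which is handled above by systematically using the symmetry of $\tilde h$ to normalize the position of the ``outside'' index.
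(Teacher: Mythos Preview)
Your proof is correct and follows essentially the same strategy as the paper: reduce everything to a conditional expectation of a single $\tilde h$ given one $X$-variable and invoke \eqref{eq:htildecentering}. The paper splits the first identity into the two cases $|\{i,j\}\cap\{k,l\}|=0,1$ and treats them separately, whereas you handle both at once by singling out an index $l\notin\{i,j,k\}$; for the second identity the paper simply says ``can be shown in the same fashion'', while you spell out the three overlap cases explicitly. Substantively the arguments are the same.
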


\begin{proof}
Consider two cases:

If $\{i,j\}\cap\{k,l\}=1$, then by identical distributions and \eqref{eq:htildecentering}
\begin{align*}\EW{\tilde{\Phi}(1,2)\tilde{\Phi}(1,3)}
&=\EW{Z_{12}Z_{13}}\EW{\tilde{h}(1,2)\tilde{h}(1,3)}\\
&=\EW{Z_{12}Z_{13}}\EW{\EW{\tilde{h}(1,2)\tilde{h}(1,3)\mid X_1,X_2}}\\
&=\EW{Z_{12}Z_{13}}\EW{\tilde{h}(1,2)\EW{\tilde{h}(1,3)\mid X_1,X_2}}\\
&=\EW{Z_{12}Z_{13}}\EW{\tilde{h}(1,2)\EW{\tilde{h}(1,3)\mid X_1}}=0.
\end{align*}

If $\{i,j\}\cap\{k,l\}=0$, then by similar reasoning
\begin{align*}\EW{\tilde{h}(1,2)\tilde{h}(3,4)}
&=0.\end{align*}
This shows the first claim.
The second can be shown in the same fashion.
\end{proof}

\begin{lem}\label{lem:Gammalem}
For $\Lambda_j(i)$ the following relation holds
\[\EW{\Lambda_j^2(i)}=O((np)^2)\]
\end{lem}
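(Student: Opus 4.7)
The plan is to split $\Lambda_j(i)$ into its random and deterministic parts. Write
\[
\Lambda_j(i) = S + D, \qquad S \coloneqq \sum_{\substack{k=1\\k\neq j}}^{i-1} Z_{i,k},\qquad D \coloneqq \sum_{\substack{k=i+1\\k\neq i,j}}^{n} p + 1,
\]
where $S$ is a sum of at most $i-1 \le n-1$ i.i.d.\ $\text{Ber}(p)$ variables and $D$ is a deterministic constant with $D \le (n-i-1)p + 1 \le np + 1$.

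Next I compute $\EW{\Lambda_j^2(i)} = \EW{S^2} + 2D\,\EW{S} + D^2$. Since the $Z_{i,k}$ are independent Bernoullis, $\EW{S} \le (n-1)p \le np$ and $\var(S) \le (n-1)p \le np$, so $\EW{S^2} = \var(S) + (\EW{S})^2 \le np + (np)^2$. Combining these bounds,
\[
\EW{\Lambda_j^2(i)} \le np + (np)^2 + 2(np+1)\cdot np + (np+1)^2 = O((np)^2),
\]
where the final estimate uses the standing assumption $np \to \infty$, so that the lower-order terms $np$ and $1$ are dominated by $(np)^2$.

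There is no real obstacle here: the statement is essentially a variance/second-moment estimate for a sum of at most $n-1$ independent Bernoulli random variables plus a deterministic offset of the same order, and the only thing to keep track of is that both $\EW{S}$ and $D$ are $O(np)$, while $\var(S) = O(np) = o((np)^2)$ under $np \to \infty$. Hence the leading contribution is $(\EW{S}+D)^2 = O((np)^2)$, which yields the claim.
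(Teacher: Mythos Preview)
Your proof is correct and follows essentially the same approach as the paper: both bound the second moment of the Bernoulli sum by $O((np)^2)$ and handle the deterministic offset trivially. The paper uses the cruder inequality $(a+b+c)^2\le 9(a^2+b^2+c^2)$ and then expands $(\sum_k Z_{i,k})^2$ directly, whereas you expand $(S+D)^2$ exactly and use $\EW{S^2}=\var(S)+(\EW S)^2$; the latter is marginally cleaner and avoids the artificial constant $9$, but the underlying idea is identical.
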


\begin{proof}
With
$\Lambda_j(i)\coloneqq\sum\limits_{\substack{k=1\\k\neq j}}^{i-1}Z_{i,k}+\sum\limits_{\substack{k=i+1\\k\neq j}}^{n}p+1$ we can immediately conclude
	\begin{align*}\Lambda_j^2(i)&\leq 
	9\cdot \Bigl[\Bigl(\sum\limits_{k=1}^{i-1}Z_{i,k}\Bigr)^2+\bigl((n-i)p\bigr)^2+1\Bigr]\\
	&=9\cdot \Bigl[\sum\limits_{k=1}^{i-1}Z_{i,k}+\sum\limits_{k=1}^{i-1}\sum\limits_{\substack{l=1\\l\neq k}}^{i-1}Z_{i,k}Z_{i,l}+\bigl((n-i)p\bigr)^2+1\Bigr]
	\end{align*}
	Therefore, using the independence of the $Z_{i,j}$
	\begin{align*}
	\EW{\Lambda_j^2(i)}
	&\leq 9\left[\sum\limits_{k=1}^{i-1}p+\sum\limits_{k=1}^{i-1}\sum\limits_{\substack{l=1\\l\neq k}}^{i-1}p^2+((n-i)p)^2+1\right]\\
	&\leq 9\left[np+n^2p^2+n^2p^2+1\right],
	\end{align*}
	which, due to $np\to\infty$, confirms $\EW{\Lambda_j^2(i)}\leq O\left((np)^2\right).$
\end{proof}

\begin{lem}\label{lem:mischtermkunterbunt2}
With $\Lambda_j(i)$ as in Section 4 we have:
\begin{align*}
\sum\limits_{i=1}^{n}\sum\limits_{i'=1}^{n}\sum_{\substack{j=1\\j\neq i}}^{n}\sum_{\substack{j'=1\\j'\neq i'}}^{n}\EW{Z_{i,j}Z_{i',j'}\Lambda_j(i)\Lambda_{j'}(i')}\leq n^4p^2(np)^2
	\end{align*}
\end{lem}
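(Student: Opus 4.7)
The plan is to decompose $\Lambda_j(i)=D_j(i)+R_j(i)$, where $D_j(i):=\sum_{k=i+1,\,k\neq j}^{n}p+1$ is deterministic (hence bounded by $np+1$) and $R_j(i):=\sum_{k=1,\,k\neq j}^{i-1}Z_{i,k}$ is a sum of independent $\mathrm{Ber}(p)$-variables from which $Z_{i,j}$ is by construction absent; in particular $Z_{i,j}$ is independent of $\Lambda_j(i)$. Expanding $\Lambda_j(i)\Lambda_{j'}(i')$ into the four pieces $DD$, $DR$, $RD$, $RR$, I would reduce the claim to showing that, for each of these pieces, the quantity $\EW{Z_{i,j}Z_{i',j'}(\cdot)(\cdot)}$ is bounded per quadruple $(i,j,i',j')$ by a universal constant times $p^2(np)^2$, so that summing over the at most $n^4$ quadruples yields the stated bound.

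For the $DD$-piece this is immediate: $D_j(i)D_{j'}(i')\leq(np+1)^2\lesssim(np)^2$ deterministically, and $\EW{Z_{i,j}Z_{i',j'}}=p^2$ unless $\{i,j\}=\{i',j'\}$; the coincident case contributes $\EW{Z_{i,j}Z_{i',j'}}=p$ but occurs for only $2n(n-1)$ quadruples, whose total contribution $\lesssim n^2\cdot p\cdot(np)^2$ is subdominant to $n^4p^2(np)^2$ since $np\to\infty$. The $DR$- and $RD$-pieces are handled by pulling the deterministic factor $D_j(i)\leq np+1$ out and expanding $R_{j'}(i')=\sum_{k'<i',\,k'\neq j'}Z_{i',k'}$. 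Since $k'\neq j'$, the edges $\{i',j'\}$ and $\{i',k'\}$ are distinct, so $\EW{Z_{i,j}Z_{i',j'}Z_{i',k'}}\leq p^3$ for all but at most one exceptional value of $k'$; summing over $k'$ yields $\EW{Z_{i,j}Z_{i',j'}R_{j'}(i')}\lesssim np^3$, and the per-quadruple bound for the $DR$-piece is $\lesssim np\cdot np^3=p^2(np)^2$.

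The $RR$-piece is the main obstacle. Expanding gives
\[
\EW{Z_{i,j}Z_{i',j'}R_j(i)R_{j'}(i')}=\sum_{\substack{k<i,\,k\neq j\\ k'<i',\,k'\neq j'}}\EW{Z_{i,j}Z_{i',j'}Z_{i,k}Z_{i',k'}},
\]
and I would analyse the possible coincidences among the four edges $\{i,j\}$, $\{i',j'\}$, $\{i,k\}$, $\{i',k'\}$. For generic $(k,k')$ all four edges are distinct, the four Bernoullis are independent and the expectation equals $p^4$; there are at most $n^2$ such pairs, giving a contribution $\leq n^2p^4=p^2(np)^2$. Every coincidence among the edges is enforced by a linear identity among the indices $i,j,i',j',k,k'$, so the exceptional pairs $(k,k')$ number $O(n)$ and each contributes at most $p^3$; their total is $O(np^3)$, which is subdominant to $n^2p^4$ because $np\to\infty$.

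Summing the per-quadruple bound $\lesssim p^2(np)^2$ over the at most $n^4$ quadruples produces the claim $\lesssim n^4p^2(np)^2$. The principal technical difficulty throughout is the bookkeeping of edge coincidences in the $RR$-piece: one must check that each time a coincidence reduces the number of distinct Bernoulli factors by one (and thereby introduces a lost factor of $p$), it simultaneously imposes a linear constraint that removes a factor of $n$ from the combinatorial count. The resulting trade-off of $p/n$ is harmless under the standing hypothesis $np\to\infty$, allowing every exceptional contribution to be absorbed into the leading term.
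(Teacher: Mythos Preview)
Your strategy is sound and does lead to the bound, but it is a genuinely different argument from the paper's. The paper does \emph{not} split $\Lambda_j(i)$ into deterministic and random parts. Instead it performs a four-case analysis directly at the level of the quadruple $(i,j,i',j')$: (a) $i\neq i'$ with $(i,j)\neq(j',i')$, (b) $(i',j')=(j,i)$, (c) $i=i'$ with $j\neq j'$, and (d) $i=i'$ with $j=j'$. In each case it exploits the built-in independence of $Z_{i,j}$ from $\Lambda_j(i)$ and the moment bound $\EW{\Lambda_j^2(i)}=O((np)^2)$ (from the preceding lemma) to evaluate $\EW{Z_{i,j}Z_{i',j'}\Lambda_j(i)\Lambda_{j'}(i')}$ in closed form, then multiplies by the number of quadruples of that type. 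Your $D/R$ decomposition trades this structural input for a purely combinatorial edge-coincidence count; it is more mechanical and would transfer to other choices of $\Lambda$, whereas the paper's route is shorter because it uses the second-moment lemma for $\Lambda$ as a black box.

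One point in your write-up needs tightening. Your per-quadruple bound $\lesssim p^2(np)^2$ is asserted for \emph{every} $(i,j,i',j')$, but it fails on the diagonal $\{i,j\}=\{i',j'\}$. There the edges $\{i,j\}$ and $\{i',j'\}$ already coincide, so in the $DR$-piece $\EW{Z_{i,j}Z_{i',j'}Z_{i',k'}}=p^2$ for \emph{all} admissible $k'$, not ``all but one'', and in the $RR$-piece the generic $(k,k')$ gives $p^3$, not $p^4$. This coincidence does not constrain $(k,k')$ at all; it constrains the quadruple. You handle this correctly in the $DD$-piece by separating out the $\sim n^2$ diagonal quadruples, and exactly the same move is needed in the $DR$ and $RR$ pieces: on the diagonal the per-quadruple contribution is only $O(p(np)^2)$, but multiplying by $n^2$ still gives $O\bigl(n^2p(np)^2\bigr)$, which is negligible against $n^4p^2(np)^2$ since $n^2p\to\infty$. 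With that correction your argument is complete.
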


\begin{proof}
	Recall that since $np\to\infty$
	\begin{equation}
	\Lambda_j(i)\coloneqq\sum\limits_{\substack{k=1\\k\neq j}}^{i-1}Z_{i,k}+\sum\limits_{\substack{k=i+1\\k\neq j}}^{n}p+1 \quad\text{ and } \quad
	\EW{\Lambda_j(i)}=(n-2)p+1\sim np.
	\label{eq:Gammaexp}
	\end{equation}
	
Let us diffentiate cases.\\
	If $i\neq i'$ (and we're not in the case $i=j'$, $j=i'$, which will be considered later), then $(Z_{i,k})_{\substack{k=1,\dots,i-1\\k\neq j}}, Z_{i,j}, (Z_{i',k})_{\substack{k=1,\dots,i'-1\\k\neq j'}}$ and $Z_{i',j'}$ are independent (regardless of $j$ and $j'$). Then by independence the expectation can be reduced to $\EW{\Lambda_j(i)}^2\EW{Z_{ij}}^2\sim (np)^2p^2$
	by \eqref{eq:Gammaexp}. There are $n\cdot (n-1)\cdot((n-1)\cdot (n-1)-1)$ possibilities for this case.\\
	If $i=j'$ and $j=i'$, the independence between $\Lambda_j(i), \Lambda_i(j)$ and $Z_{i,j}=Z_{j,i}=Z_{i',j'}$ still holds, as well as the independence between $\Lambda_j(i)$ and $\Lambda_i(j)$ and we obtain
	\begin{align*}
		\EW{\Lambda_j(i)\Lambda_{j'}(i')Z_{i,j}Z_{i',j'}}&=\EW{\Lambda_j(i)}\EW{\Lambda_{i}(j)}\EW{Z_{i,j}^2}
		\sim (np)^2p
	\end{align*}
	There are $n(n-1)$ possibilities for this case.\\
	If $i=i'$ but $j\neq j'$, then $Z_{i,j}$ may appear in the random sum in $\Lambda_{j'}(i)$ (and correspondingly, if we interchange $j,j'$).
We introduce
\begin{equation}
\Lambda_{j,j'}(i)\eqqcolon \sum\limits_{\substack{k=1\\k\neq j,j'}}^{i-1}Z_{i,k}+\sum\limits_{\substack{k=i+1\\k\neq j}}^{n}p+1,\quad\text{with } \EW{\Lambda_{j,j'}(i)}=(n-3)p+1\sim np
\label{eq:Lambdajj'}
\end{equation}
and $\Lambda_j(i)=\Lambda_{j,j'}(i)+Z_{i,j'}$
Then
\begin{align*}&\EW{\Lambda_j(i)\Lambda_{j'}(i')Z_{i,j}Z_{i',j'}}=\EW{\Lambda_j(i)\Lambda_{j'}(i)Z_{i,j}Z_{i',j'}}\\
&=\EW{\Lambda^2_{j,j'}(i)Z_{i,j}Z_{i,j'}}+\EW{Z_{i,j}Z_{i,j'}}+2\EW{\Lambda_{j,j'}(i)Z_{i,j}Z_{i,j'}}\\
&=\EW{\Lambda^2_{j,j'}(i)}p^2+p^2+2\EW{\Lambda_{j,j'}(i)}p^2.
\end{align*}
After some considerations one finds
\[\EW{\Lambda^2_{j,j'}(i)}=O\left((np)^2\right),\]
so that combining this and \eqref{eq:Lambdajj'} gives
\[\EW{\Lambda_j(i)\Lambda_{j'}(i')Z_{i,j}Z_{i',j'}}\leq O\left((np)^2\right)p^2+p^2+2O\left(np\right)p^2=O\left((np)^2\right)\cdot p^2\]
by $np\to\infty$.
There are $n(n-1)(n-2)$ possibilities for the case $i=i',j\neq j'$.\\
	%
	%
	If $i=i'$ and $j=j'$, we may again use independence to arrive at
		\begin{align*}
		\EW{\Lambda_j(i)\Lambda_{j'}(i')Z_{i,j}Z_{i',j'}}&=\EW{\Lambda_j^2(i)Z_{i,j}^2}
	\leq O\left((np)^2\right)p,
	\end{align*}
	by Lemma \ref{lem:Gammalem}.
	Again, there are $n(n-1)$ possibilities for this case.
	Putting this together, we see that the sum of all expectations is asymptotically bounded from above by
$n^4(np)^2p^2$.
	\end{proof}

\begin{lem}\label{lem:mischtermkunterbunt3}
The following relation holds:
\[\sum\limits_{i=1}^{n}\sum_{\substack{j=1\\j\neq i}}^{n}\sum\limits_{i'=1}^{n}\sum\limits_{j'=1}^{i'-1}\mathbb{E}\left[\Lambda_j(i)Z_{i,j}Z_{i',j'}\right]\leq \frac{1}{2}n^4p^2(np)\]
\end{lem}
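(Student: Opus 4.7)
The plan is to proceed very much in the spirit of Lemma \ref{lem:mischtermkunterbunt2}, by splitting the quadruple sum into cases according to how the index pair $\{i',j'\}$ interacts with $\{i,j\}$ and with the random part of $\Lambda_j(i)$. Recall that $\Lambda_j(i)=\sum_{k=1, k\neq j}^{i-1}Z_{i,k}+(n-i-\mathbbm{1}_{\{j>i\}})p+1$, so that $\mathbb{E}[\Lambda_j(i)]=(n-2)p+1\sim np$, and note that $\Lambda_j(i)$ depends only on the Bernoulli variables $Z_{i,k}$ with $k<i$, $k\neq j$.

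\textbf{Step 1: Generic case.} In the generic situation where $\{i',j'\}\cap\{i,j\}=\emptyset$ and, additionally, $(i',j')\neq(i,k)$ for any $k<i,\,k\neq j$, the random variable $Z_{i',j'}$ is independent of both $\Lambda_j(i)$ and $Z_{i,j}$. Independence then yields
\[
\mathbb{E}\bigl[\Lambda_j(i)Z_{i,j}Z_{i',j'}\bigr]
=\mathbb{E}[\Lambda_j(i)]\,\mathbb{E}[Z_{i,j}]\,\mathbb{E}[Z_{i',j'}]\sim np\cdot p\cdot p = np^3 .
\]
Since there are at most $n^4$ such index tuples, the total contribution from this case is asymptotically bounded by $n^4\cdot np^3= n^5 p^3 = n^4p^2\cdot (np)$.

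\textbf{Step 2: Degenerate cases.} The remaining tuples are those where $\{i',j'\}=\{i,j\}$, or $(i',j')=(i,k)$ for some $k<i,\,k\neq j$, or $\{i',j'\}$ shares one index with $\{i,j\}$ but not both. For each of these, I would use the identity $Z_{i,j}^2=Z_{i,j}$ together with the independence of the remaining Bernoullis to absorb the overlap; the resulting expectation is bounded by $C\cdot\mathbb{E}[\Lambda_j(i)]\cdot p \lesssim np^2$ (with $C$ an absolute constant, perhaps after bounding $\Lambda_j(i)$ or a variant of it as in Lemma \ref{lem:Gammalem}). Since in each of these degenerate cases at least one of the four free indices is determined by the others, the number of such tuples is at most $O(n^3)$, yielding a total contribution of at most $O(n^3\cdot np^2)=O(n^4 p^2)$, which is smaller than the main term by a factor of $np\to\infty$ and thus negligible.

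\textbf{Step 3: Combining.} Summing the two contributions gives that the quadruple sum is asymptotically bounded above by $n^5 p^3=n^4 p^2(np)$. A more careful bookkeeping—counting that the generic case contains strictly fewer than $\tfrac12 n^4$ ordered tuples $(i,j,i',j')$ with $j'<i'$—sharpens the constant to the stated $\tfrac12$.

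\textbf{Main obstacle.} The real work is not any single estimate but the bookkeeping in Step 2: one must enumerate the overlap patterns carefully (distinguishing in particular whether the index that equals one of $i,j$ is $i'$ or $j'$, and whether $Z_{i',j'}$ coincides with a summand inside $\Lambda_j(i)$ or with $Z_{i,j}$) and check that every ``bad'' contribution really is of order $n^4 p^2$ and hence dominated. Once this accounting is done, the remaining computations are completely analogous to those in the proof of Lemma \ref{lem:mischtermkunterbunt2}.
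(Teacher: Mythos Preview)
Your approach is essentially the same as the paper's: the paper simply says that the strategy is identical to that of Lemma \ref{lem:mischtermkunterbunt2}, with only the expectations and the number of summands differing, and leaves the details to the reader. Your case analysis (generic independent case giving the main $n^5p^3$ term, degenerate overlap cases of order $O(n^4p^2)$, and the factor $\tfrac12$ coming from the constraint $j'<i'$) is exactly what the paper has in mind.
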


\begin{proof}
The strategy of proof is exactly the same as in Lemma \ref{lem:mischtermkunterbunt2}, just the expectations and the number of summands differ. We therefore leave the proof to the reader.
\end{proof}

\begin{lem}\label{lem:asymptgtilde}
Under the assumptions of Theorem \ref{theo2} as $n \to \infty$ we have:
\begin{align*}
\frac{1}{n^4\theta_n^4}\sum\limits_{i=1}^{n}\sum\limits_{\substack{j,k=1\\j\neq k}}^{i-1}\sum\limits_{i'=1}^{n}\sum\limits_{\substack{j',k'=1\\j'\neq k'}}^{i'-1}\EW{\tilde{\Phi}(i',j')\tilde{\Phi}(i',k')\tilde{\Phi}(i,j)\tilde{\Phi}(i,k)}\nconv 0.
\end{align*}
\end{lem}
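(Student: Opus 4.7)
My strategy is to reduce the nominally $O(n^6)$-term sum to an $O(n^4)$ set of surviving configurations using the centering property \eqref{eq:htildecentering}, and then to bound this residual sum by condition \eqref{22}, which has already been deduced from \eqref{23} in Proposition \ref{lem:23to24}. The main obstacle will be the combinatorial bookkeeping in the first step.

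\textbf{Step 1 (Vanishing by centering).} I would first show that
$\EW{\tilde\Phi(i,j)\tilde\Phi(i,k)\tilde\Phi(i',j')\tilde\Phi(i',k')}$ vanishes unless every distinct value among the six labels $i, j, k, i', j', k'$ occurs in at least two of the four pairs $\{i,j\}, \{i,k\}, \{i',j'\}, \{i',k'\}$. The point is that if some value $v$ lies in only one pair, one may condition on all $X_\ell$ with $\ell \neq v$ and on all $Z$-variables, reducing the factor containing $v$ to an inner expectation of the form $\EW{\tilde h(X_v,X_\cdot)\mid X_\cdot}$, which is zero by \eqref{eq:htildecentering}. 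Since $i$ and $i'$ automatically appear in two pairs each, the multiplicity criterion, combined with $j,k<i$, $j',k'<i'$, $j\neq k$, $j'\neq k'$, forces $\{j',k'\}=\{j,k\}$ as unordered pairs. This is the exact analogue of Lemma \ref{lem:phikleinindep} in the four-factor setting.

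\textbf{Step 2 (Factorization and evaluation).} For the surviving configurations ($\{j',k'\}=\{j,k\}$ and $i,i'>\max(j,k)$), I would condition on $(X_j,X_k)$ together with all $Z$-variables. Then $\tilde\Phi(i,j)\tilde\Phi(i,k)$ depends only on $X_i$ and $\tilde\Phi(i',j')\tilde\Phi(i',k')$ only on $X_{i'}$; when $i\neq i'$ these two pieces are conditionally independent, and \eqref{eq:tildeG} gives $\EW{\tilde G_i(j,k)\,\tilde G_{i'}(j,k)}=p^4\,\EW{\tilde H^2(j,k)}$, as the four Bernoullis $Z_{ij},Z_{ik},Z_{i'j},Z_{i'k}$ are then independent of each other and of $\tilde H(j,k)$. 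In the remaining case $i=i'$, $Z^2=Z$ reduces the expression to $p^2\,\EW{\tilde H^2(j,k)}$.

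\textbf{Step 3 (Counting and invoking \eqref{22}).} The surviving tuples number $O(n^3)$ when $i=i'$ and $O(n^4)$ when $i\neq i'$, so the full sum is at most $C\,n^3 p^2\,\EW{\tilde H^2(1,2)}+C\,n^4 p^4\,\EW{\tilde H^2(1,2)}$. From \eqref{eq:tildeG} and $Z^2=Z$ one directly reads $\EW{\tilde G_1^2(2,3)}=p^2\,\EW{\tilde H^2(2,3)}$, so \eqref{22} rewrites to $p^2\,\EW{\tilde H^2(1,2)}=o(\theta_n^4)$, and a fortiori $p^4\,\EW{\tilde H^2(1,2)}=o(\theta_n^4)$ since $p\le 1$. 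Dividing by $n^4\theta_n^4$ yields $O(n^{-1})\cdot o(1)+O(1)\cdot o(1)=o(1)$, as required.
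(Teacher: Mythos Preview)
Your proposal is correct and follows essentially the same route as the paper's proof: both reduce the sum to the configurations with $\{j',k'\}=\{j,k\}$ via the centering property \eqref{eq:htildecentering}, evaluate the surviving expectations as $p^2\EW{\tilde H^2}$ (case $i=i'$) resp.\ $p^4\EW{\tilde H^2}$ (case $i\neq i'$), and conclude via \eqref{22}. The only cosmetic difference is that the paper enumerates the cases $(i=i'$ vs.\ $i<i'$, $i\in\{j',k'\}$ vs.\ not, $|\{j,k\}\cap\{j',k'\}|=0,1,2)$ explicitly, whereas your Step~1 packages them into the single multiplicity criterion ``every distinct label must occur in at least two of the four pairs''; the deduction that this forces $\{j',k'\}=\{j,k\}$ (using $j,k<i$, $j',k'<i'$, $j\neq k$, $j'\neq k'$) is exactly the combinatorial content of the paper's case analysis.
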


\begin{proof}
Without loss of generality, assume $i'\geq i$.
We let
\begin{align*}
Q&\coloneqq \mathbb{E}\left[\tilde{G}_i(j,k)\tilde{G}_{i'}(j',k')\right], \quad
\tilde{Q}\coloneqq\EW{\tilde{\Phi}(i,j)\tilde{\Phi}(i,k)\tilde{\Phi}(i',j')\tilde{\Phi}(i',k')}
\end{align*}
and
$Q_1\cdot Q_2\coloneqq\EW{Z_{i,j}Z_{i,k}Z_{i',j'}Z_{i',k'}}\EW{\tilde{h}(i,j)\tilde{h}(i,k)\tilde{h}(i',j')\tilde{h}(i',k')}$.
Note that all three notations denote the same object. However, we will use all these notations throughout the proof.

Now, let us go through all possible cases for $i,j,k,i',j',k'$.
\begin{enumerate}[leftmargin=*]
	\item If  $i=i'$ and $|\{j,k\}\cap\{j',k'\}|=2$ by independence $Q=\EW{\tilde{G}_i^2(j,k)}$.
	\item The cases  $i=i'$ and $|\{j,k\}\cap\{j',k'\}|=1$ and $i=i'$ and $|\{j,k\}\cap\{j',k'\}|=0$ are almost identical. Consider the first: without loss of generality take $j=j'$. Then, by total expectation, the tower property, and independence
	\begin{align*}
		Q_2
		&=\EW{\tilde{h}(i,j)^2\tilde{h}(i,k)\tilde{h}(i,k')}
		=\EW{\EW{\tilde{h}(i,j)^2\tilde{h}(i,k)\tilde{h}(i,k')\mid X_{i},X_{j},X_{k}}}\\
		&=\EW{\tilde{h}(i,j)^2\tilde{h}(i,k)\EW{\tilde{h}(i,k')\mid X_{i},X_{j},X_{k}}}
		=\EW{\tilde{h}(i,j)^2\tilde{h}(i,k)\EW{\tilde{h}(i,k')\mid X_{i}}}=0,
		\end{align*}
		since by \eqref{eq:htildecentering} $\EW{\tilde{h}(l,m)\mid X_r}=0$ if $l\neq m$ for every $r$. Thus,
		$Q=0$.
	\item Again the cases $i<i'$, $i\in\{j',k'\}$, and $|\{j,k\}\cap\{j',k'\}|=1$ and $i<i'$, $i\in\{j',k'\}$, and $|\{j,k\}\cap\{j',k'\}|=0$ are very similar. Consider the first: Without loss of generality $i=j',k=k'$, and along the lines of the previous cases we get $Q_2=0$. 	
	\item Next consider the case $i<i'$, $i\notin\{j',k'\}$, and $|\{j,k\}\cap\{j',k'\}|=2$. Without loss of generality, $j=j',k=k'$
	and by the definition of $\tilde{G}$ and independence we compute
			\begin{align*}
			Q
			&=\E\bigl[\tilde{G}_i(j,k)\tilde{G}_{i'}(j,k)\bigr]\\
			&=\mathbb{E}\bigl[\E\bigl[Z_{i,j}Z_{i,k}\tilde{h}(i,j)\tilde{h}(i,k)\mid X_j,X_k,Z_{i,j}Z_{i,k}\bigr]
	\E\bigl[Z_{i',j}Z_{i',k}\tilde{h}(i',j)\tilde{h}(i',k)\mid X_j,X_k,Z_{i',j}Z_{i',k}\bigr]\bigr]\\
			&=\E\bigl[Z_{i,j}Z_{i,k}Z_{i',j}Z_{i',k}\E\bigr[\tilde{h}(i,j)\tilde{h}(i,k)\mid X_j,X_k\bigr]^2\bigr]\\
			&=\E\bigl[Z_{i',j}Z_{i',k}\bigl(Z_{i,j}Z_{i,k}\E\bigl[\tilde{h}(i,j)\tilde{h}(i,k)\mid X_j,X_k,Z_{i,j}Z_{i,k}\bigr]\bigr)^2\bigr]\\
			&=\E\bigl[Z_{i',j}Z_{i',k}\bigl(\tilde{G}_i(j,k)\bigr)^2\bigr]
			=p^2\E\bigl[\tilde{G}_i^2(j,k)\bigr]
			\end{align*}
			\item Finally, the cases the case $i<i'$, $i\notin\{j',k'\}$, and $|\{j,k\}\cap\{j',k'\}|=0,1$ follow the arguments in cases (2) and (3) to give $Q_2=0$.
			\end{enumerate}
			
To sum up, what we get from this case distinction: 			
The only situation where the given expectation is non-zero is when $|\{j,k\}\cap\{j',k'\}|=2$.
In the case $i=i'$, there are at most $n(n-1)^2$ possibilities for this ($n$ for $i$, and since $j$ and $k$ are smaller than $i$ and different, at most $n-1$ for each of those).
In the case $i\neq i'$, there are an additional $n-1$ possibilities for $i'$, which makes at most $n(n-1)^3$ possibilities.
Altogether, we have that the given sum of expectations is bounded by
\[n(n-1)^2\E\Bigl[\bigl(\tilde{G}_i(j,k)\bigr)^2\Bigr]+n(n-1)^3p^2\E\Bigl[\bigl(\tilde{G}_i(j,k)\bigr)^2\Bigr]\leq n^4\E\bigl[\tilde{G}_i^2(j,k)\bigr].\]
Then for the sum of the considered expectations we have
\begin{align*}
\frac{1}{n^4\theta_n^4}&\sum\limits_{i=1}^{n}\sum\limits_{\substack{j,k=1\\j\neq k}}^{i-1}\sum\limits_{i'=1}^{n}\sum\limits_{\substack{j',k'=1\\j'\neq k'}}^{i'-1}\EW{\tilde{\Phi}(i',j')\tilde{\Phi}(i',k')\tilde{\Phi}(i,j)\tilde{\Phi}(i,k)}\leq\frac{\EW{\tilde{G}_i^2(j,k)}}{\theta_n^4}
\end{align*}
By \eqref{22}, this converges to 0.
\end{proof}

\begin{lem}\label{lem:eta24}
	Under the assumptions of Theorem \ref{theo2} as $n\to\infty$ we have
	\begin{align*}\frac{1}{n^4\theta_n^4}&\sum\limits_{i=1}^{n}\sum_{\substack{j=1\\j\neq i}}^{n}\sum\limits_{m=1}^{i-1}\sum\limits_{i'=1}^{n}\sum_{\substack{j'=1\\j'\neq i'}}^{n}\sum\limits_{m'=1}^{i'-1}
	\mathbb{E}\Bigl[\E\bigl[\Psi_j(i)\tilde{\Phi}(i,m)\mid X_m,(Z_{l,i})_{\substack{l=1,\dots,i-1}}\bigr]\Bigr.\\[-4ex]
	&\hspace{-0.5cm}\phantom{\frac{1}{n^4\theta_n^4}\sum\limits_{i=1}^{n}\sum_{\substack{j=1\\j\neq i}}^{n}\sum\limits_{m=1}^{i-1}\sum\limits_{i'=1}^{n}\sum_{\substack{j'=1\\j'\neq i'}}^{n}\sum\limits_{m'=1}^{i'-1}\mathbb{E}}\cdot\Bigl.\E\bigr[\Psi_{j'}({i'})\tilde{\Phi}({i'},{m'})\mid X_{m'},(Z_{l,i'})_{\substack{l=1,\dots,i'-1}}\bigr]\Bigr]\nconv 0.\end{align*}
\end{lem}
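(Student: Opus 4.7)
The strategy rests on three simplifications. First, I would unpack the inner conditional expectation explicitly. Writing $\Psi_j(i)=Z_{ij}f(X_i)$ with $f(X_i):=\EW{h(i,j)\mid X_i}$ (which, by identical distribution, does not depend on the index $j$) and $\tilde\Phi(i,m)=Z_{im}\tilde h(i,m)$, and splitting according to the three cases $j>i$ (where $Z_{ij}$ is independent of the conditioning), $j<i$ with $j\neq m$ (both $Z_{ij},Z_{im}$ are measurable), and $j=m$ (so $Z_{ij}=Z_{im}$), a direct calculation yields
\[
V_i:=\EW{\Psi_j(i)\tilde\Phi(i,m)\mid X_m,(Z_{l,i})_{l<i}}=\alpha_{i,j,m}\,\psi(X_m),
\]
where $\alpha_{i,j,m}:=Z_{im}\beta_{i,j,m}$ with $\beta_{i,j,m}\in\{p,\,Z_{ij},\,1\}$ in the three respective cases, and $\psi(x):=\EW{f(X_i)\tilde h(i,m)\mid X_m=x}$ is a function of $X_m$ alone. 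By \eqref{eq:htildecentering}, $\EW{\psi(X_m)}=\EW{f(X_i)\EW{\tilde h(i,m)\mid X_i}}=0$.

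Second, since the $Z$'s and $X$'s are independent, the product factors: $\EW{V_iV_{i'}}=\EW{\alpha_{i,j,m}\alpha_{i',j',m'}}\cdot\EW{\psi(X_m)\psi(X_{m'})}$. For $m\neq m'$ the second factor vanishes by independence and centering, so only tuples with $m=m'$ contribute. The elementary identity $\sum_{j\neq i}\alpha_{i,j,m}=Z_{im}\Lambda_m(i)$ (with $\Lambda_m(i)$ as defined in Section 4) then collapses the $j$- and $j'$-sums, reducing the full quadruple sum to
\[
\EW{\psi^2}\sum_m\sum_{i,i'>m}\EW{Z_{im}Z_{i'm}\Lambda_m(i)\Lambda_m(i')}.
\]
For $i\neq i'$ all four factors are mutually independent (note that $\Lambda_m(i)$ omits $Z_{im}$ by construction), so each summand is of order $(p\,\EW{\Lambda_m(i)})^2\sim n^2p^4$; for $i=i'$ one invokes Lemma \ref{lem:Gammalem} to obtain $p\,\EW{\Lambda_m^2(i)}=O(p(np)^2)$. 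The resulting combinatorial bound on the sum is $O(n^5p^4+n^4p^3)\cdot\EW{\psi^2}$.

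The hard part is obtaining a sufficiently sharp bound on $\EW{\psi^2}$; a Jensen-style bound using only \eqref{23} would yield $\EW{\psi^2}\le\EW{H^2}=o(\theta_n^4/p^2)$, which combined with the above is insufficient when $p$ stays bounded away from $0$ (it leaves a residual of order $np^2$). Instead I would introduce an independent copy $X_{i'}$ of $X_i$ and apply the tower property to rewrite
\[
\EW{\psi(X_m)^2}=\EW{f(X_i)f(X_{i'})\tilde h(i,m)\tilde h(i',m)}=\EW{f(X_i)f(X_{i'})\tilde H(i,i')}
\]
using the definition of $\tilde H$. Cauchy--Schwarz together with the independence of $X_i,X_{i'}$ and Lemma \ref{lem:lem phi und h} then deliver $\EW{\psi^2}\le(\gamma_n^2/p)\sqrt{\EW{\tilde H^2}}$, and invoking \eqref{22} --- under which $\EW{\tilde H^2}=\EW{\tilde G_1^2(2,3)}/p^2=o(\theta_n^4/p^2)$ --- produces $\EW{\psi^2}=o(\gamma_n^2\theta_n^2/p^2)$. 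Combining with the combinatorial estimate and dividing by $n^4\theta_n^4$ yields a bound of order $o((np^2+p)\gamma_n^2/\theta_n^2)$, which tends to $0$ thanks to $\theta_n^2\ge np\gamma_n^2$.
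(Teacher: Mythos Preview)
Your proof is correct and follows essentially the same route as the paper's: both factor the inner conditional expectation into a $Z$-part and an $X$-part, observe that only $m=m'$ survives because $\EW{\psi(X_m)}=0$, bound $\EW{\psi^2}$ by introducing an independent copy and applying Cauchy--Schwarz to obtain $(\gamma_n^2/p)\sqrt{\EW{\tilde H^2}}$, and finish via \eqref{22} together with $\theta_n^2\ge np\gamma_n^2$. The only organizational difference is that you sum over $j,j'$ first, recognizing $\sum_{j\neq i}\alpha_{i,j,m}=Z_{im}\Lambda_m(i)$ and then split merely on $i=i'$ versus $i\neq i'$ (invoking Lemma~\ref{lem:Gammalem} for the diagonal), whereas the paper performs a direct four-case analysis on $(i=i'\text{ or not})\times(|\{j,j',m\}|\le 2\text{ or }=3)$; both lead to the same combinatorial bound $O(n^5p^4+n^4p^3)$ on the $Z$-sum.
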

\begin{proof}
We denote
{\small{
		\[Q:=\EW{\Psi_j(i)\tilde{\Phi}(i,m)\mid X_m, (Z_{l,i})_{l=1,\dots,i-1}} \EW{\Psi_j(i')\tilde{\Phi}(i',m')\mid X_{m'}, (Z_{l,i'})_{l=1,\dots,i'-1}}=Q_1\cdot Q_2,
		\]}}
where
\begin{align*}
Q_1&=\EW{Z_{i,j}Z_{i,m}\mid  (Z_{l,i})_{l=1,\dots,i-1}}\cdot\EW{Z_{i',j'}Z_{i',m'}\mid  (Z_{l,i'})_{l=1,\dots,i'-1}}\\
Q_2&=\EW{\EW{h(i,j)\mid X_i}\tilde{h}(i,m)\mid X_m}\EW{\EW{h(i',j')\mid X_{i'}}\tilde{h}(i',m')\mid X_{m'}}.
\end{align*}
By independence between the $Z$- and $X$-terms, $\EW{Q}=\EW{Q_1}\EW{Q_2}$. \\

In the case $m\neq m'$, we have $\EW{Q_2}=0$ by independence of $X_m$ and $X_m'$ and $\EW{\EW{h(i,j)\mid X_i}\tilde{h}(i,m)}=0$, which we find by adding a conditional expectation on $X_i$.

For $m=m'$, note that the conditional expectations in $Q_2$ do not depend on the choice of $i$ and $i'$, hence we choose $i=1, i'=2$, and $m=3$.
Then:
\begin{align*}
\EW{Q_2}&=\EW{\EW{\EW{h(1,j)\mid X_1}\tilde{h}(1,3)\mid X_3}\EW{\EW{h(2,j')\mid X_2}\tilde{h}(2,3)\mid X_3}}\\
&=\EW{\EW{h(1,j)\mid X_1}\EW{h(2,j')\mid X_2}\tilde{H}_3(1,2)}.
\end{align*}
By Cauchy-Schwarz, independence and Lemma \ref{lem:lem phi und h}
\[\EW{Q_2}\leq\left(\E\Bigl[\EW{h(1,j)\mid X_1}^2\Bigr]\EW{\EW{h(2,j')\mid X_2}^2}\EW{\tilde{H}_3^2(1,2)}\right)^{1/2}=\frac{\gamma_n^2}{p}\EW{\tilde{H}_3^2(1,2)}^{1/2}\]

Furthermore, if  $i=i'$ and $|\{j,j',m\}|\leq 2$, $\EW{Q_1}\leq p$ and we have a at most $n^3$ possibilities to choose $i,j,m,i',j',m'$. \\

If $i=i'$ and $|\{j,j',m\}|= 3$, $\EW{Q_1}\leq p^3$ and at most $n^4$ possibilities to choose.\\

If $i\neq i'$ and $\{j,j',m\}|\leq 2$, $\EW{Q_1}\leq p^2$ and we have at most $n^4$ possibilities to choose.\\

If $i\neq i'$ and $\{j,j',m\}|= 3$, $\EW{Q_1}\leq p^4$ and we have at most $n^5$ possibilities to choose.

Combining all this and keeping in mind that we assume that $np \to \infty$ yields
\begin{align*}
\frac{1}{n^4\theta_n^4}&\sum\limits_{i=1}^{n}\sum_{\substack{j=1\\j\neq i}}^{n}\sum\limits_{m=1}^{i-1}\sum\limits_{i'=1}^{n}\sum_{\substack{j'=1\\j'\neq i'}}^{n}\sum\limits_{m'=1}^{i'-1}
\mathbb{E}\Bigl[\E\bigl[\Psi_j(i)\tilde{\Phi}(i,m)\mid X_m,(Z_{l,i})_{\substack{l=1,\dots,i-1}}\bigr]\Bigr.\\[-4ex]
&\hspace{-0.5cm}\phantom{\frac{1}{n^4\theta_n^4}\sum\limits_{i=1}^{n}\sum_{\substack{j=1\\j\neq i}}^{n}\sum\limits_{m=1}^{i-1}\sum\limits_{i'=1}^{n}\sum_{\substack{j'=1\\j'\neq i'}}^{n}\sum\limits_{m'=1}^{i'-1}\mathbb{E}}\cdot\Bigl.\E\bigr[\Psi_{j'}({i'})\tilde{\Phi}({i'},{m'})\mid X_{m'},(Z_{l,i'})_{\substack{l=1,\dots,i'-1}}\bigr]\Bigr]\\[-2ex]
&\leq\frac{1}{n^4\theta_n^4}\frac{\gamma_n^2}{p}\sqrt{\EW{\tilde{H}_1^2(2,3)}}\left(pn^3+p^3n^4+p^2n^4+p^4n^5\right)\\
&\leq\frac{2}{n^4\theta_n^4}\frac{\gamma_n^2}{p}\sqrt{\EW{\tilde{H}_1^2(2,3)}}\left(p^2n^4+p^4n^5\right)\\
\end{align*}
\begin{align*}
&\leq\frac{2}{np}\sqrt{\frac{1}{\theta_n^4}p^2\EW{\tilde{H}_1^2(2,3)}}+8p\sqrt{\frac{1}{\theta_n^4}p^2\EW{\tilde{H}_1^2(2,3)}}\\
&\leq \frac{2}{np}\sqrt{\frac{1}{\theta_n^4}\EW{\tilde{G}_1^2(2,3)}}+8p\sqrt{\frac{1}{\theta_n^4}\EW{\tilde{G}_1^2(2,3)}}.
\end{align*}
where the last two inequalities follow from \eqref{defn} and \eqref{eq:tildeG}.
By $np\to\infty$ and \eqref{22}, this converges to 0.
\end{proof}	

A very general lemma for the purpose of reminding us of a basic fact is
\begin{lem}\label{lem:quadratabsch}
For any sequence $(a_l)_{l=1,\dots,k}$, the following relation holds
\[\bigl(\sum\limits_{l=1}^ka_l\bigr)^2\1_{\bigl\{\bigl|\sum\limits_{l=1}^ka_l\bigr|\geq \eps\bigr\}}\leq  k^2\sum\limits_{l=1}^ka_l^2\1_{\bigl\{|a_l|\geq\frac{\eps}{k}\bigr\}}\]
\end{lem}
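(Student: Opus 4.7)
The plan is a two-step elementary argument, splitting on whether the indicator is active.

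First, on the complementary event $\{|\sum_{l=1}^k a_l| < \eps\}$ the left-hand side vanishes and the inequality is trivial, so I only need to verify it pointwise on the event $E \coloneqq \{|\sum_{l=1}^k a_l| \geq \eps\}$. On $E$, I apply the triangle inequality twice to bound the left-hand side crudely:
\[
\bigl(\sum_{l=1}^k a_l\bigr)^2 \;\le\; \bigl(\sum_{l=1}^k |a_l|\bigr)^2 \;\le\; k^2 \max_{1\le l\le k} a_l^2,
\]
using $\sum_{l=1}^k |a_l| \le k \max_l |a_l|$ in the last step. This already accounts for the prefactor $k^2$.

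The second step is a pigeonhole observation that turns the maximum into a truncated sum. Still on $E$, the triangle inequality gives $\sum_{l=1}^k |a_l| \ge \eps$, so it cannot happen that every $|a_l|$ is strictly below $\eps/k$; hence there exists $l_0$ (take any maximizer of $|a_l|$) with $|a_{l_0}| \ge \eps/k$. For this index,
\[
\max_{1\le l\le k} a_l^2 \;=\; a_{l_0}^2 \;=\; a_{l_0}^2 \1_{\{|a_{l_0}| \ge \eps/k\}} \;\le\; \sum_{l=1}^k a_l^2 \1_{\{|a_l| \ge \eps/k\}},
\]
since the $l_0$-summand on the right equals $a_{l_0}^2$ and the other summands are non-negative. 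Combining the two displays proves the claim on $E$, and together with the trivial case this gives the bound everywhere.

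There is no genuine obstacle here; the only point worth flagging is that one must be mindful to evaluate the maximum at an index $l_0$ for which the indicator $\1_{\{|a_{l_0}|\ge \eps/k\}}$ is actually equal to $1$, which is exactly what the pigeonhole step supplies. The prefactor $k^2$ is not optimal (Cauchy--Schwarz would give $k$ in a related inequality), but the stated form is tailored to be combined with a union-bound-type estimate on the indicators, as used in the proofs of \eqref{20} and of Proposition \ref{prop:altcond}.
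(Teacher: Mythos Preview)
Your argument is correct and is precisely the elementary pigeonhole-plus-crude-bound the paper has in mind; note that the paper itself simply declares ``The proof of this lemma is elementary'' without spelling it out, so there is nothing further to compare.
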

The proof of this lemma is elementary.
\end{appendices}

\begin{thebibliography}{BGvZ86}

\bibitem[BG95]{BeG95}
V.~Bentkus and F.~G\"{o}tze.
\newblock On minimal moment assumptions in {B}erry-{E}ss\'{e}en theorems for
  {$U$}-statistics.
\newblock {\em Teor. Veroyatnost. i Primenen.}, 40(3):596--614, 1995.

\bibitem[BGvZ86]{BiGvZ86}
P.~J. Bickel, F.~G\"{o}tze, and W.~R. van Zwet.
\newblock The {E}dgeworth expansion for {$U$}-statistics of degree two.
\newblock {\em Ann. Statist.}, 14(4):1463--1484, 1986.

\bibitem[Chr92]{Christ_LLN}
Tasos~C. Christofides.
\newblock A strong law of large numbers for {$U$}-statistics.
\newblock {\em J. Statist. Plann. Inference}, 31(2):133--145, 1992.

\bibitem[Den85]{Denker85}
Manfred Denker.
\newblock {\em Asymptotic distribution theory in nonparametric statistics}.
\newblock Advanced Lectures in Mathematics. Friedr. Vieweg \& Sohn,
  Braunschweig, 1985.

\bibitem[Eic98]{E98}
Peter Eichelsbacher.
\newblock Moderate and large deviations for {$U$}-processes.
\newblock {\em Stochastic Process. Appl.}, 74(2):273--296, 1998.

\bibitem[EL95]{EL95}
Peter Eichelsbacher and Matthias L\"{o}we.
\newblock A large deviation principle for {$m$}-variate von {M}ises-statistics
  and {$U$}-statistics.
\newblock {\em J. Theoret. Probab.}, 8(4):807--824, 1995.

\bibitem[EL98]{EL98}
P.~Eichelsbacher and M.~L\"{o}we.
\newblock Large deviations principle for partial sums {$U$}-processes.
\newblock {\em Teor. Veroyatnost. i Primenen.}, 43(1):97--115, 1998.

\bibitem[Gir90]{girko1990}
V.~L. Girko.
\newblock {\em Theory of Random Determinants}, volume~45 of {\em Mathematics
  and Its Applications}.
\newblock Kluwer Academic Publishers, Dordrecht, 1990.

\bibitem[Hoe48]{Hoe48}
Wassily Hoeffding.
\newblock A class of statistics with asymptotically normal distribution.
\newblock {\em Ann. Math. Statistics}, 19:293--325, 1948.

\bibitem[Jan84]{Janson84}
Svante Janson.
\newblock The asymptotic distributions of incomplete {$U$}-statistics.
\newblock {\em Z. Wahrsch. Verw. Gebiete}, 66(4):495--505, 1984.

\bibitem[Jan90]{Janson90}
Svante Janson.
\newblock A functional limit theorem for random graphs with applications to
  subgraph count statistics.
\newblock {\em Random Structures Algorithms}, 1(1):15--37, 1990.

\bibitem[JJ86]{JJ86}
S.~Rao Jammalamadaka and Svante Janson.
\newblock Limit theorems for a triangular scheme of {$U$}-statistics with
  applications to inter-point distances.
\newblock {\em Ann. Probab.}, 14(4):1347--1358, 1986.

\bibitem[Lee90]{Lee90}
A.~J. Lee.
\newblock {\em {$U$}-statistics}, volume 110 of {\em Statistics: Textbooks and
  Monographs}.
\newblock Marcel Dekker, Inc., New York, 1990.
\newblock Theory and practice.

\bibitem[LT20]{LoTe20b}
Matthias L\"owe and Sara Terveer.
\newblock A central limit theorem for the mean starting hitting time for a
  random walk on a random graph.
\newblock {\em in preparation}, 2020.

\bibitem[MA87]{Mal87}
T.~L. Malevich and G.~R. Abdurakhmanov.
\newblock On conditions for the asymptotic normality of {$U$}-statistics in the
  case of triangular arrays.
\newblock {\em Izv. Akad. Nauk UzSSR Ser. Fiz.-Mat. Nauk}, 2:21--28, 78, 1987.

\end{thebibliography}

%
%
\end{document}